\documentclass{amsart}

\usepackage{tikz}
\usepackage{enumerate}
\usetikzlibrary{matrix}
\usetikzlibrary{positioning}
\tikzset{>=stealth}

\newcommand{\R}{\mathbb{R}}
\newcommand{\C}{\mathbb{C}}
\newcommand{\Z}{\mathbb{Z}}
\newcommand{\Q}{\mathbb{Q}}
\newcommand{\N}{\mathbb{N}}
\newcommand{\D}{\mathbb{D}}
\newcommand{\T}{\mathbb{T}}

\newcommand{\Lrightarrow}{\hbox to1cm{\rightarrowfill}}






\theoremstyle{plain}
\newtheorem{prop}{Proposition}[section]
\newtheorem{thm}[prop]{Theorem}
\newtheorem{lemma}[prop]{Lemma}
\newtheorem{cor}[prop]{Corollary}

\theoremstyle{definition}
\newtheorem{dfn}[prop]{Definition}

\theoremstyle{remark}

\numberwithin{equation}{section}

\begin{document}

\title[Structural aspects of Generalized Harmonic Functions]{Generalized harmonic function structures}

\date{\today}

\author{Markus Klintborg}

\address{Mathematics, Faculty of Textiles, Engineering and Business, Department of Engineering, Bor\aa s University, Sweden}

\email{markus.klintborg@hb.se}

\address{Mathematics, Faculty of Science, Centre for Mathematical Sciences, 
Lund University, Sweden}

\email{markus.klintborg@math.lu.se}

\begin{abstract}
We model generalized harmonic functions on rings of differential operators and complex function spaces. The differential operators in the Weyl-algebra $A_2$ that commute with rotations are described and leads to a natural notion for such functions. We also investigate how such functions are related, and retrieve the cellular decomposition for polyharmonic functions.   
\end{abstract}

\subjclass[2010]{Primary: 31A05; Secondary: 33C05, 31A30, 16S32}

\keywords{Harmonic functions, 
hyper\-geometric functions, polyharmonic functions, rings of differential operators.}

\maketitle


\section{Introduction}

Symmetry-related concerns arise naturally in the study of generalized harmonic functions on the complex domain. For example, how is one to treat an operator like $\partial \bar \partial+z\partial+ \bar z \bar \partial$, locally and away from the origin in the lack of translation invariance? Or, in heading straight for the point, under what conditions can the action of such a ``generalized harmonic operator`` on the homogeneous components $f(|z|^2)z^m$  be described solely in terms of a corresponding action on the rotational invariant part $f(|z|^2)$, so as to allow for an analysis of each part separately? Questions like these are of great importance in the study of the various forms of representation for generalized harmonic functions. 

Later investigations, initiated by A.~Olofsson and J. ~ Wittsten \cite{OW}, have produced a rich source of examples that all suggest that a property like the one just described is somehow inherent in the differential operators that commute with rotations.  In the first part of this paper, we characterize the set of differential operators in the second Weyl-algebra $A_2=A_2(\C)$ over the complex number field $\C$ that commute with rotations, hereby referred to as $\mathfrak{R}_2=\mathfrak{R}_2(\C)$. We then show that a differential operator $D\in \mathfrak{R}_2$ commutes with rotations if and only if there exists a sequence $\{T_{m}\}_{m \in \Z}$ of ordinary differential operators in the first Weyl-algebra $A_1=A_1(\C)$ such that 
\begin{equation*}
Dp(|z|^2)\xi_m(z)=\xi_m(z)T_{m}p(|z|^2),
\end{equation*} 
holds for all $m \in \Z$ and polynomials $p(x) \in \C[x]$, where $\xi_m(z)=z^m$ for $m \geq 0$ and $\xi_m(z)=\bar z^{|m|}$ for $m < 0$. As an implication of the last statement, we then conclude that the span of each of these monomial terms $\xi_m(z)$ over the ring $\C[|z|^2]$ of such polynomials is invariant under the operators in $\mathfrak{R}_2 \subset A_2$ that commute with rotations. This part will also lead to a natural notion for generalized harmonic polynomials or functions, here given as the set of those polynomials or functions that are annihilated by some element of the $\C$-subalgebra $\mathfrak{H}_2$ of $\mathfrak{R}_2$ that is generated by the elements $z\partial, \bar z\bar \partial, \partial \bar \partial \in \mathfrak{R}_2 \subset A_2$. 

In the next part of this treatment, we discuss the various forms of representation for polynomials in the ring $\C[z,\bar z]$ in terms of some of the generalized harmonic functions that have played a dominant role up to this point. In particular, we study the generalized harmonic functions that were treated by A.~ Olofsson in \cite{O14} under the confinement to polynomials in $\C[z,\bar z]$, and derive a set of bases in terms of such functions in Lemma \ref{Cdecomplemma}. This leads to the cellular decomposition for polynomials in $\C[z,\bar z]$, as portrayed in Corollary \ref{cellulardecompthmpolynomials}, which is given in the form of   
\begin{align}
\label{polyharmonicrepintro}
p(z,\bar{z})&=w_0(z,\bar z)+(1-|z|^2)w_1(z,\bar z)+\ldots+(1-|z|^2)^{n-1}w_{n-1}(z,\bar z),
\end{align}
for some polynomials $w_j(z,\bar z) \in \C[z,\bar z]$ in fulfilment of $L_{n-1-j,n-1-j}w_j(z,\bar z)=0$, where $L_{\alpha,\alpha}$ is the differential operator 
\begin{equation*}
(1-|z|^2)\partial \bar \partial+\alpha z \partial+\alpha \bar z \bar \partial-\alpha^2 \in \mathfrak{H}_2(\C).
\end{equation*}
 
In the latter part of this treatment, we move on to complex function spaces and in particular that of smooth functions on the unit disc $\D$. We employ the results that were developed in the restriction to polynomials in $\C[z,\bar z]$, and show how the generalized harmonic functions that were treated in \cite{OK} can be represented as sums of polyharmonic functions, whose order depends on the particular integer-parametrization of such generalized harmonic functions. We then turn the question around and show that any polyharmonic function of order $n$ on $\D$ has a similar representation to that in \eqref{polyharmonicrepintro} and may be cellularly decomposed in the sense of A.~ Borichev and H.~ Hedenmalm in \cite{BH}, who were the first to provide it and gave it its name. These results are found in Proposition \ref{pqtopolpropinfinite} and Theorem \ref{cellulardecomposition}, respectively.

In summary then, our aim is to extract and identify the larger class of operators in $A_2$ that display similar behaviours to those that have appeared in connection with the generalized harmonic functions that have been treated until now, and to give a rather self-contained structural account of how some of the more exemplary such functions are related.  

Historical accounts and a more comprehensive list of references in relation to generalized harmonic functions may also be found in the papers listed throughout this text. The cellular decomposition for polyharmonic functions has also been studied in a wider context and was extended to the higher dimensional setting in \cite{Perälä}. 

\section{Rotation symmetries and the Weyl-algebra $A_2(K)$ }

A polynomial is an element in the ring $K[z,\bar{z}]$ with coefficients in a field $K$ of characteristic zero. We shall refer to $\bar{z}$ as the conjugate of the indeterminant $z$, and write $|z|^2=z\bar z$ for the product $z \cdot \bar{z}$ in $K[z,\bar{z}]$. The preliminary convention will be that $\bar{f}(z,\bar{z})$ is the polynomial obtained from $f(z,\bar{z}) \in K[z,\bar{z}]$ by interchanging its arguments, i.e. $\bar{f}(z,\bar z)=f(\bar z, z)$. We will also write $z,\bar{z}$ for the linear operators in the algebra of endo\-morphisms over $K$ that act via multiplication on polynomials in $K[z,\bar{z}]$ and define $\partial$ and $\bar{\partial}$ in the usual way as $\partial(f)=\partial f / \partial z$ and $\bar \partial(f)=\partial f / \partial \bar{z}$ for polynomials $f(z,\bar{z}) \in K[z,\bar{z}]$. These operators satisfy the usual commutator relationships and the $K$-algebra they generate is the Weyl-algebra $A_2=A_2(K)$, as described in the first chapter of J.-E Bj\"ork \cite{Bjork}.

It is well familiar that the products of form $z^{\alpha_1}\bar{z}^{\beta_1}\partial^{\alpha_2}\bar{\partial}^{\beta_2}$ for $\alpha_1,\beta_1,\alpha_2,\beta_2 \in \N$ make a basis for the Weyl-algebra $A_2$ as a vector space over $K$. It is sometimes referred to as the canonical basis for $A_2(K)$, and an element written as a linear combination of such terms is said to be in canonical form. We will denote this basis by $\mathcal{B}$, and also refer to an element $B=z^{\alpha_1}\bar{z}^{\beta_1}\partial^{\alpha_2}\bar{\partial}^{\beta_2}$ of this basis as that element parametrized by the integers $\alpha_1,\beta_1,\alpha_2,\beta_2 \in \N$. In a preliminary sense, we will take $\bar f(z,\bar z,\partial, \bar \partial)$ to be the operator $f(\bar z, z, \bar \partial, \partial)$, and refer to it as the conjugate of $f(z,\bar z,\partial, \bar \partial) \in A_2$. In this section, it will also be assumed that $K$ is the field of complex numbers $\C$.   

For a given $\gamma \in \C$, we define the endomorphism $M_{\gamma}$ on $K[z,\bar z]$ that act on a polynomial $p(z,\bar z)$ in $K[z,\bar z]$ by multiplying each of its arguments by the constant $\gamma \in \C$. In symbols, 
\begin{equation*}
M_{\gamma}p(z,\bar z)=p(\gamma z, \bar \gamma \bar z).
\end{equation*} 
We will also designate the symbol $R_{\theta}$ for the elements $M_{e^{i \theta}}$ of the circle group $\T$ that are parametrized by $e^{i \theta} \in \T$, and refer to the collection of such operators as the family of rotation operators. An element $D$ in $A_2$ is then said to commute with rotations if
\begin{equation*}
DR_{\theta}p(z,\bar z) = R_{\theta}Dp(z,\bar z), 
\end{equation*}
holds for all $\theta \in [0,2\pi)$ and every polynomial $p(z,\bar z)$ in $K[z,\bar z]$. Note that the set of all elements in $A_2$ that commute with rotations is closed under the ring operations of $A_2$, so products and sums of elements in $A_2$ that commute with rotations again commute with rotations. We shall refer to the $K$-subalgebra of $A_2$ that consists of all elements that commute with rotations by use of the symbol $\mathfrak{R}_2$.  
 
Arguably, the most familiar elements of the $K$-subalgebra $\mathfrak{R}_2$ of $A_2$ are $\partial \bar \partial$ and its powers, that together define a class of functions under the umbrella term polyharmonic functions. The operator $\partial \bar \partial$ is just the ``complex version`` of the Laplace operator, and raising this operator to the second power defines the function class commonly referred to as the biharmonic functions. It is a well known fact that polynomial expressions in $\partial \bar \partial$ characterize the operators in $A_2$ that are invariant with respect to both rotations and translations. If we drop the last of these two con\-ditions however, then plenty of examples in $A_2$ arise aside from those already mentioned. Examples include the second degree terms $z\partial$ and $\bar z \bar \partial$ and elements of the polynomial ring $K[|z|^2] \subset A_2(K)$. We proceed to show that the elements $z\bar z,z\partial, \bar z \bar \partial$ and $\partial \bar \partial$ is a generating set for the $K$-subalgebra $\mathfrak{R}_2$ of $A_2(K)$, and give both a necessary and sufficient condition under which elements in the Weyl-algebra $A_2$ commute with rotations. This will later lead us to our notion of a generalized harmonic polynomial, or more generally, a generalized harmonic function. This procedure will also produce many more examples of operators in $A_2$ that commute with rotations.

\begin{lemma}
\label{parametercondcanbasisrot}
Let $B \in \mathcal{B}$ be an element in the canonical basis for $A_2(K)$ that is parametrized by the integers $\alpha_1,\beta_1,\alpha_2,\beta_2 \in \N$. Then a necessary and sufficient condition for $B$ to commute with rotations is that 
\begin{equation}
\label{commutecond}
\alpha_1-\alpha_2=\beta_1-\beta_2.
\end{equation}
\end{lemma}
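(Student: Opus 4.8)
The plan is to compute directly how a basis element $B=z^{\alpha_1}\bar z^{\beta_1}\partial^{\alpha_2}\bar\partial^{\beta_2}$ transforms under conjugation by a rotation operator $R_\theta$, and read off the commutation condition from the resulting scalar factor. First I would record the elementary action of $R_\theta$ on the generators: since $R_\theta p(z,\bar z)=p(e^{i\theta}z,e^{-i\theta}\bar z)$, one has $R_\theta\, z\, R_\theta^{-1}=e^{i\theta}z$ and $R_\theta\,\bar z\, R_\theta^{-1}=e^{-i\theta}\bar z$ as multiplication operators, and by the chain rule $R_\theta\,\partial\, R_\theta^{-1}=e^{-i\theta}\partial$ and $R_\theta\,\bar\partial\, R_\theta^{-1}=e^{i\theta}\bar\partial$. (Here $R_\theta$ is invertible on $K[z,\bar z]$ with inverse $R_{-\theta}$.) This is the only real computation involved, and it is short; the ``main obstacle'' is merely the bookkeeping of signs in these four relations.

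Next I would conjugate $B$ termwise. Because conjugation by $R_\theta$ is a ring automorphism of $A_2$, it distributes over the product defining $B$, giving
\begin{equation*}
R_\theta\, B\, R_\theta^{-1}=e^{i\theta(\alpha_1-\beta_1-\alpha_2+\beta_2)}\,B.
\end{equation*}
Thus $B$ commutes with every $R_\theta$ if and only if this scalar factor equals $1$ for all $\theta\in[0,2\pi)$, which happens precisely when the exponent $\alpha_1-\beta_1-\alpha_2+\beta_2$ vanishes, i.e. when $\alpha_1-\alpha_2=\beta_1-\beta_2$. This establishes both directions at once: sufficiency is immediate from the displayed identity, and for necessity one notes that if the exponent is a nonzero integer $k$, then already $R_\theta B\neq B R_\theta$ for, say, $\theta$ with $e^{ik\theta}\neq 1$ — equivalently, one may test against a single monomial such as $z^{\alpha_2}$ (or $1$ if $\alpha_2=\beta_2=0$) on which $B$ acts nontrivially and see the two sides differ by the factor $e^{ik\theta}$.

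One small point to handle carefully: the identities $R_\theta\partial R_\theta^{-1}=e^{-i\theta}\partial$ etc.\ should be verified as an identity of operators on $K[z,\bar z]$, which is a routine application of the chain rule to $R_\theta^{-1}p=p(e^{-i\theta}z,e^{i\theta}\bar z)$ followed by differentiation and re-substitution; I would state it as a preliminary remark. With that in hand the proof is essentially the one-line exponent computation above, so I do not anticipate any genuine difficulty — the content of the lemma is entirely in the clean multiplicative behaviour of the canonical basis under rotations.
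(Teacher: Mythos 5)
Your proposal is correct and is essentially the paper's argument repackaged: the paper applies $BR_\theta$ and $R_\theta B$ to an arbitrary polynomial and compares the scalar prefactors $e^{i(\alpha_2-\beta_2)\theta}$ and $e^{i(\alpha_1-\beta_1)\theta}$ produced by the chain rule, which is exactly your identity $R_\theta B R_\theta^{-1}=e^{i(\alpha_1-\beta_1-\alpha_2+\beta_2)\theta}B$ in conjugation form. The only nit is in your necessity step: the test monomial should be $z^{\alpha_2}\bar z^{\beta_2}$ rather than $z^{\alpha_2}$ (on which $B$ acts as zero whenever $\beta_2>0$), a trivial fix since $Bz^{\alpha_2}\bar z^{\beta_2}=\alpha_2!\,\beta_2!\,z^{\alpha_1}\bar z^{\beta_1}\neq 0$.
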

\begin{proof}
Let $\theta \in [0,2\pi)$ be a fixed number and let $B=z^{\alpha_1}\bar z^{\beta_1} \partial^{\alpha_2}\bar \partial^{\beta_2}$ be parametrized by the integers $\alpha_1,\beta_1,\alpha_2,\beta_2 \in \N$.
Write $\partial$ and $\bar \partial$ in the form $\partial_z$ and $\partial_{\bar z}$, respectively. Let $p(z,\bar z)$ be any polynomial in $K[z,\bar z]$. Then
\begin{align}
\label{rightrotation}
BR_{\theta}p(z,\bar z) &= z^{\alpha_1}\bar z^{\beta_1} \partial^{\alpha_2}\bar \partial^{\beta_2}p(e^{i\theta}z,e^{-i\theta}\bar z) \\ \notag &=e^{i(\alpha_2-\beta_2)\theta}z^{\alpha_1}\bar z^{\beta_1} \partial_w^{\alpha_2}\partial_{\bar w}^{\beta_2}p(w,\bar w)\big|_{w=ze^{i\theta}}\\ \notag &= e^{i(\alpha_2-\beta_2)\theta}z^{\alpha_1}\bar z^{\beta_1} R_{\theta} \partial^{\alpha_2} \bar \partial^{\beta_2}p(z,\bar{z}). 
\end{align}
On the other hand,
\begin{align}
\label{leftrotation}
R_{\theta}Bp(z,\bar z) &= R_{\theta}z^{\alpha_1}\bar z^{\beta_1} \partial^{\alpha_2}\bar \partial^{\beta_2}p(z,\bar z) \\ \notag &=e^{i(\alpha_1-\beta_1)\theta}z^{\alpha_1}\bar z^{\beta_1}R_{\theta}\partial^{\alpha_2}\bar \partial^{\beta_2}p(z,\bar z). 
\end{align}
Since $B \in \mathcal{B}$ commutes with rotations, 
\begin{equation*}
e^{i(\alpha_1-\beta_1)\theta}=e^{i(\alpha_2-\beta_2)\theta},
\end{equation*}
by which we conclude. 
\end{proof}
The last condition gives a necessary and sufficient condition under which the canonical basis elements in $\mathcal{B}$ commute with rotations. We would also like to show that the collection of elements in $\mathcal{B}$ that satisfy this condition is a basis for $\mathfrak{R}_2$, that is, a basis for the collection of all elements in $A_2$ that commute with rotations. We have chosen to do so in the following way, for which purpose we recall the lexicographic order defined on $\N \times \N$, where $(m,n) < (m',n')$ if either $m < m'$ or $m=m'$ and $n < n'$.  

\begin{prop}
\label{spanR2}
Let $D$ be an element in the $K$-subalgebra $\mathfrak{R}_2$ of $A_2(K)$. Then $D$ lies in the linear span of all elements in the canonical basis $\mathcal{B}$ whose parameters satisfy \eqref{commutecond}. 
\end{prop}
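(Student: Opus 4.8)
The plan is to argue in the canonical form of $D$ and show, by a weight count, that every canonical basis element actually occurring in $D$ must satisfy \eqref{commutecond}. Write $D=\sum_{B\in\mathcal B}c_B\,B$ as a \emph{finite} $K$-linear combination of canonical basis elements; it suffices to prove $c_B=0$ whenever the parameters $\alpha_1,\beta_1,\alpha_2,\beta_2\in\N$ of $B$ violate \eqref{commutecond}. The key point is that conjugation by the rotation $R_\theta$ is diagonal on $\mathcal B$: from the computations \eqref{rightrotation} and \eqref{leftrotation} in the proof of Lemma \ref{parametercondcanbasisrot} one reads off, for $B=z^{\alpha_1}\bar z^{\beta_1}\partial^{\alpha_2}\bar\partial^{\beta_2}$,
\begin{equation*}
R_\theta\,B=e^{\,i\,w(B)\,\theta}\,B\,R_\theta,\qquad w(B):=(\alpha_1-\alpha_2)-(\beta_1-\beta_2),
\end{equation*}
as an identity of $K$-linear operators on $K[z,\bar z]$; so \eqref{commutecond} says exactly that $w(B)=0$.

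With this in hand the proof is short. Since $D\in\mathfrak{R}_2$ we have $R_\theta D=DR_\theta$ for all $\theta$; expanding each side in the canonical form of $D$ and applying the displayed identity turns this into $\bigl(\sum_B c_B e^{iw(B)\theta}B\bigr)R_\theta=\bigl(\sum_B c_B B\bigr)R_\theta$. Now $R_\theta=M_{e^{i\theta}}$ is an invertible endomorphism of $K[z,\bar z]$, with inverse $R_{-\theta}$, so cancelling it on the right yields the operator identity $\sum_B c_B\bigl(e^{iw(B)\theta}-1\bigr)B=0$ on $K[z,\bar z]$. Since $\mathcal B$ is a basis of $A_2$ and hence consists of linearly independent operators on $K[z,\bar z]$, this forces $c_B\bigl(e^{iw(B)\theta}-1\bigr)=0$ for every $B$ and every $\theta\in[0,2\pi)$. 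For $B$ with $w(B)\neq0$ the function $\theta\mapsto e^{iw(B)\theta}-1$ does not vanish identically, so $c_B=0$; hence only the terms with $w(B)=0$, i.e.\ those obeying \eqref{commutecond}, can occur in $D$, which is the assertion.

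The one spot that needs care is purely organisational: $R_\theta$ is not itself an element of $A_2$, so the ``cancel $R_\theta$'' step must be carried out in $\operatorname{End}_K(K[z,\bar z])$, after which one re-enters $A_2$ to invoke linear independence of the canonical basis; and the sum over $B$ must be kept finite throughout, which is automatic because $D\in A_2$. There is also a more elementary route -- presumably the one the lexicographic order is being prepared for -- that avoids conjugation entirely: supposing $D$ has a canonical term violating \eqref{commutecond}, take the lexicographically largest such term, apply $D$ (or $R_\theta D-DR_\theta$) to a monomial $z^M\bar z^N$ with $M,N$ large enough that no differentiation annihilates it, and extract a contradiction from the coefficient of the resulting top monomial; this only ever tests operators against monomials, at the price of a somewhat longer inductive argument.
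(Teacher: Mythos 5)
Your proof is correct, and it diverges from the paper's at the decisive step. Both arguments begin the same way: write $D$ in canonical form and use the computations \eqref{rightrotation} and \eqref{leftrotation} to turn $R_\theta D=DR_\theta$ into a $\theta$-dependent linear relation. But where you cancel the invertible endomorphism $R_\theta$ on the right and then invoke linear independence of the canonical basis elements as operators on $K[z,\bar z]$ --- which is exactly the statement, recorded in the paper as well known, that $\mathcal{B}$ is a $K$-basis of $A_2\subset\operatorname{End}_K(K[z,\bar z])$ --- the paper keeps $R_\theta$ inside the operator
\begin{equation*}
T_{\theta}=\sum_{j=1}^{n}c_j \big(e^{i(\alpha_{2,j}-\beta_{2,j})\theta}-e^{i(\alpha_{1,j}-\beta_{1,j})\theta}\big)z^{\alpha_{1,j}} \bar z^{\beta_{1,j}}R_{\theta}\partial^{\alpha_{2,j}} \bar \partial^{\beta_{2,j}}
\end{equation*}
and tests it against the monomials $z^{\alpha_{2,i}}\bar z^{\beta_{2,i}}$, ordered lexicographically in $(\alpha_{2,i},\beta_{2,i})$, running an induction that only ever uses linear independence of the monomials $z^i\bar z^j$ inside $K[z,\bar z]$. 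That is precisely the ``more elementary route'' you sketch in your closing remark, and it explains why the paper sets up the lexicographic order beforehand. Your version is shorter and avoids the induction entirely; the paper's buys a more hands-on argument that never cancels $R_\theta$ and never appeals to linear independence at the operator level. Your parenthetical worry about having to ``re-enter $A_2$'' after cancelling is harmless: the paper defines $A_2$ as a subalgebra of the endomorphisms of $K[z,\bar z]$, so the basis property of $\mathcal{B}$ already \emph{is} linear independence of operators, and the factor $e^{iw(B)\theta}-1$ differs from the paper's $e^{i(\alpha_{2}-\beta_{2})\theta}-e^{i(\alpha_{1}-\beta_{1})\theta}$ only by the nonzero unit $e^{-i(\alpha_2-\beta_2)\theta}$, so the two relations carry the same information.
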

\begin{proof}
The operator $D \in A_2$ has a canonical representation in the form of
\begin{equation}
\label{canonicalexpansionbasisR2prop}
D=c_1z^{\alpha_{1,1}} \bar z^{\beta_{1,1}}\partial^{\alpha_{2,1}} \bar \partial^{\beta_{2,1}}+\ldots+c_nz^{\alpha_{1,n}} \bar z^{\beta_{1,n}}\partial^{\alpha_{2,n}} \bar \partial^{\beta_{2,n}},
\end{equation}
for some non-zero $c_i \in K$. Since $D$ commutes with rotations, we see from equation \eqref{rightrotation} and \eqref{leftrotation} that the sum
\begin{align*}
\label{Toperatorlinind}
T_{\theta}=\sum_{j=1}^{n}c_j \big(e^{i(\alpha_{2,j}-\beta_{2,j})\theta}-e^{i(\alpha_{1,j}-\beta_{1,j})\theta}\big)z^{\alpha_{1,j}} \bar z^{\beta_{1,j}}R_{\theta}\partial^{\alpha_{2,j}} \bar \partial^{\beta_{2,j}} \in \text{End}(K[z,\bar z]),
\end{align*}
must vanish identically on $K[z,\bar z]$ for every $\theta \in [0,2\pi)$. Introduce the lexicographic ordering on the set $\N \times \N$ and assume at no cost that the pairs $(\alpha_{2,i},\beta_{2,i})$ have been indexed in increasing order for $1 \leq i \leq n$. Set
\begin{equation*}
\label{polynomialpropbasisR2}
p_{i}(z,\bar z)=z^{\alpha_{2,i}}\bar z^{\beta_{2,i}} \in K[z,\bar z],
\end{equation*}
for $1 \leq i \leq n$. Notice that 
\begin{equation*}
\partial^{\alpha_{2,i}}\bar \partial^{\beta_{2,i}}p_j(z,\bar z)=\delta_{\alpha_{2,i},\alpha_{2,j}}\delta_{\beta_{2,i},\beta_{2,j}}\alpha_{2,j}!\beta_{2,j}!, \quad i \geq j.
\end{equation*}
In other words, unless $(\alpha_{2,i},\beta_{2,i})=(\alpha_{2,j},\beta_{2,j})$ for $i \geq j$, then the right hand side of this last equation must vanish. Now, since $T_{\theta}$ must vanish identically on $K[z,\bar z]$, it must vanish at $p_1(z,\bar z)$ in particular. Hence, 
\begin{equation*}
\label{Toperatorequation}
0=\alpha_{2,1}!\beta_{2,1}!\sum_{j=1}^{n}c_j\delta_{\alpha_{2,j},\alpha_{2,1}}\delta_{\beta_{2,j},\beta_{2,1}}\big(e^{i(\alpha_{2,j}-\beta_{2,j})\theta}-e^{i(\alpha_{1,j}-\beta_{1,j})\theta}\big)z^{\alpha_{1,j}} \bar z^{\beta_{1,j}},
\end{equation*}
in $K[z,\bar z]$ for all $\theta \in [0,2\pi)$. We have the implication 
\begin{equation*}
(\alpha_{2,i},\beta_{2,i})=(\alpha_{2,j},\beta_{2,j}) \Longrightarrow (\alpha_{1,i},\beta_{1,i}) \neq (\alpha_{1,j},\beta_{1,j}), \quad i \neq j, \quad 1 \leq i,j \leq n,
\end{equation*}
for the simple reason that the canonical basis elements in \eqref{canonicalexpansionbasisR2prop} were assumed to be distinct, or parametrized by different sets of integers $\alpha_{1,i},\beta_{1,i},\alpha_{2,i},\beta_{2,i} \in \N$ for $1 \leq i \leq n$. Since the elements $z^i\bar z^j$ are linearly independent over $K$ and form a basis for $K[z,\bar z]$, we may then conclude that
\begin{equation*}
\delta_{\alpha_{2,j},\alpha_{2,1}}\delta_{\beta_{2,j},\beta_{2,1}}c_j\big(e^{i(\alpha_{2,j}-\beta_{2,j})\theta}-e^{i(\alpha_{1,j}-\beta_{1,j})\theta}\big)=0,
\end{equation*}
for $1 \leq j \leq n$. Thus, 
\begin{equation*}
\label{equalityexponentialsR2prop}
e^{i(\alpha_{2,j}-\beta_{2,j})\theta}-e^{i(\alpha_{1,j}-\beta_{1,j})\theta}=0, 
\end{equation*}
for $j \geq 1$ less than or equal to some integer $k-1 \leq n$, say, in which case \eqref{commutecond} holds. If $k-1=n$, we are done. If not, we can proceed as follows. 

Suppose that \eqref{commutecond} holds for $j \in \N$ such that $1 \leq j < k \leq n$. Similarly then, and by evaluating the polynomial $p_k(z,\bar z)$ with respect to the operator $T_{\theta}$, we get that 
\begin{equation*}
\label{Toperatorequation}
0=\alpha_{2,k}!\beta_{2,k}!\sum_{j=k}^{n}c_j\delta_{\alpha_{2,j},\alpha_{2,k}}\delta_{\beta_{2,j},\beta_{2,k}}\big(e^{i(\alpha_{2,j}-\beta_{2,j})\theta}-e^{i(\alpha_{1,j}-\beta_{1,j})\theta}\big)z^{\alpha_{1,j}} \bar z^{\beta_{1,j}},
\end{equation*}
in $K[z,\bar z]$. By repeating the argument towards the end of the last paragraph, we see in particular that 
\begin{equation*}
\label{equalityexponentialsR2prop}
e^{i(\alpha_{2,k}-\beta_{2,k})\theta}-e^{i(\alpha_{1,k}-\beta_{1,k})\theta}=0.
\end{equation*}
Inductively then, each of the canonical basis elements in \eqref{canonicalexpansionbasisR2prop} must satisfy \eqref{commutecond}. Thus, every element in $\mathfrak{R}_2$ is expressible as a sum of canonical basis elements, all of which satisfy \eqref{commutecond}.  
\end{proof}
\begin{cor}
\label{basisR2}
The set of all elements in the canonical basis $\mathcal{B}$ for $A_2(K)$ that satisfy \eqref{commutecond} is a basis for $\mathfrak{R}_2$ over $K$.
\end{cor}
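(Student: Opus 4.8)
The plan is to assemble the corollary directly from Lemma \ref{parametercondcanbasisrot} and Proposition \ref{spanR2}, since between them those two results contain all the substantive content. Write $\mathcal{B}_{\mathfrak{R}}$ for the subset of the canonical basis $\mathcal{B}$ consisting of those elements $B$ whose parameters $\alpha_1,\beta_1,\alpha_2,\beta_2 \in \N$ satisfy the relation \eqref{commutecond}. There are three things to verify: that $\mathcal{B}_{\mathfrak{R}} \subseteq \mathfrak{R}_2$, that $\mathcal{B}_{\mathfrak{R}}$ is linearly independent over $K$, and that $\mathcal{B}_{\mathfrak{R}}$ spans $\mathfrak{R}_2$ over $K$.

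First I would record that $\mathcal{B}_{\mathfrak{R}} \subseteq \mathfrak{R}_2$: this is exactly the sufficiency direction of Lemma \ref{parametercondcanbasisrot}, which guarantees that a canonical basis element satisfying \eqref{commutecond} commutes with rotations and hence belongs to the $K$-subalgebra $\mathfrak{R}_2$ of $A_2$. Since $\mathfrak{R}_2$ is in particular a $K$-subspace of $A_2$, every $K$-linear combination of elements of $\mathcal{B}_{\mathfrak{R}}$ again lies in $\mathfrak{R}_2$, so the span of $\mathcal{B}_{\mathfrak{R}}$ is contained in $\mathfrak{R}_2$.

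For linear independence, I would simply note that $\mathcal{B}_{\mathfrak{R}}$ is a subset of the canonical basis $\mathcal{B}$ for $A_2(K)$, which is linearly independent over $K$ by definition; any subset of a linearly independent family is linearly independent. For the spanning property, I would invoke Proposition \ref{spanR2}, which states precisely that every $D \in \mathfrak{R}_2$ lies in the $K$-linear span of the elements of $\mathcal{B}$ satisfying \eqref{commutecond}, i.e. in the span of $\mathcal{B}_{\mathfrak{R}}$. Combining these, $\mathcal{B}_{\mathfrak{R}}$ is a linearly independent subset of $\mathfrak{R}_2$ whose span is all of $\mathfrak{R}_2$, hence a basis.

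I do not expect any genuine obstacle here, as the corollary is a bookkeeping consequence of the preceding results; the only point requiring minor care is to use the correct half of Lemma \ref{parametercondcanbasisrot} (namely sufficiency of \eqref{commutecond}) for the inclusion $\mathcal{B}_{\mathfrak{R}} \subseteq \mathfrak{R}_2$, while Proposition \ref{spanR2} supplies the reverse containment of spans, so that both "$\mathcal{B}_{\mathfrak{R}}$ lies in $\mathfrak{R}_2$" and "$\mathcal{B}_{\mathfrak{R}}$ exhausts $\mathfrak{R}_2$" are accounted for.
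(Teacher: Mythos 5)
Your proposal is correct and takes essentially the same route as the paper, which simply cites Lemma \ref{parametercondcanbasisrot} and Proposition \ref{spanR2}; you have merely spelled out the three ingredients (containment via sufficiency, linear independence inherited from $\mathcal{B}$, and spanning from the proposition) that the paper leaves implicit.
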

\begin{proof}
The statement follows from Lemma \ref{parametercondcanbasisrot} and Proposition \ref{spanR2}.
\end{proof}
\begin{prop}
\label{basisrot}
Let $B \in \mathcal{B}$ be an element of the canonical basis for $A_2(K)$ that is parametrized by the integers $\alpha_1,\beta_1,\alpha_2,\beta_2 \in \N$. If $B$ commutes with rotations, then $B$ is a product of terms of the form $z^{\gamma_1}\bar z^{\gamma_1}, z^{\gamma_2}\partial^{\gamma_2}, \bar z^{\gamma_3} \bar \partial^{\gamma_3}$ and $\partial^{\gamma_4}\bar \partial^{\gamma_4}$ for some $\gamma_i \in \N$, factored in the order in which these elements were written.\footnote{The terms can of course be reordered to the extent that they commute.}
\end{prop}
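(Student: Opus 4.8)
The plan is to realize $B$ as the canonical form of a suitable product of the four admissible factor types. I would consider a general product
\[
P = z^{\gamma_1}\bar z^{\gamma_1}\cdot z^{\gamma_2}\partial^{\gamma_2}\cdot\bar z^{\gamma_3}\bar\partial^{\gamma_3}\cdot\partial^{\gamma_4}\bar\partial^{\gamma_4},\qquad \gamma_1,\gamma_2,\gamma_3,\gamma_4\in\N,
\]
and bring it into canonical form. The only commutation relations needed for this are the trivial ones $[z,\bar z]=[z,\bar\partial]=[\bar z,\partial]=[\partial,\bar\partial]=0$; in particular one never has to pull a $\partial$ to the left of a $z$, nor a $\bar\partial$ to the left of a $\bar z$, so no lower-order correction terms arise and the rewriting is an exact identity of monomials. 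Performing it yields $P=z^{\gamma_1+\gamma_2}\bar z^{\gamma_1+\gamma_3}\partial^{\gamma_2+\gamma_4}\bar\partial^{\gamma_3+\gamma_4}$. By uniqueness of the canonical form, $P=B=z^{\alpha_1}\bar z^{\beta_1}\partial^{\alpha_2}\bar\partial^{\beta_2}$ if and only if the exponents solve the linear system $\gamma_1+\gamma_2=\alpha_1$, $\gamma_1+\gamma_3=\beta_1$, $\gamma_2+\gamma_4=\alpha_2$, $\gamma_3+\gamma_4=\beta_2$.

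Next I would analyze this system. Eliminating variables shows it is consistent over $\Z$ precisely when $\alpha_1-\alpha_2=\beta_1-\beta_2$, which by Lemma \ref{parametercondcanbasisrot} is exactly the hypothesis that $B$ commutes with rotations; the solutions then form a line parametrized by $\gamma_1$ via $\gamma_2=\alpha_1-\gamma_1$, $\gamma_3=\beta_1-\gamma_1$, $\gamma_4=\alpha_2-\alpha_1+\gamma_1$. It remains to choose $\gamma_1$ making all four values nonnegative, i.e.\ with $\max(0,\alpha_1-\alpha_2)\le\gamma_1\le\min(\alpha_1,\beta_1)$. When $\alpha_1\le\alpha_2$ (equivalently $\beta_1\le\beta_2$) I would take $\gamma_1=0$, which expresses $B=z^{\alpha_1}\partial^{\alpha_1}\cdot\bar z^{\beta_1}\bar\partial^{\beta_1}\cdot\partial^{\alpha_2-\alpha_1}\bar\partial^{\beta_2-\beta_1}$ (with no $z\bar z$-factor); when $\alpha_1>\alpha_2$ I would take $\gamma_1=\alpha_1-\alpha_2=\beta_1-\beta_2$, which expresses $B=(z\bar z)^{\alpha_1-\alpha_2}\cdot z^{\alpha_2}\partial^{\alpha_2}\cdot\bar z^{\beta_2}\bar\partial^{\beta_2}$ (with no $\partial\bar\partial$-factor). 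Since $\alpha_2,\beta_2\ge 0$ give $\alpha_1-\alpha_2\le\alpha_1$ and $\alpha_1-\alpha_2=\beta_1-\beta_2\le\beta_1$, in both cases the chosen $\gamma_1$ lies in the admissible range and all $\gamma_i\in\N$, which finishes the proof.

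The computation itself is routine; the one place that requires care is the assertion that the reduction of $P$ to canonical form is \emph{exact}, which is why I would emphasize that only the pairwise-commuting elements among $\{z,\bar z,\partial,\bar\partial\}$ are ever invoked --- had one needed to move $\partial$ past $z$, the nonzero commutator $[\partial,z]=1$ would produce lower-order monomials and the argument could no longer be a clean matching of exponents. A secondary, essentially trivial point is verifying that the nonnegativity interval for $\gamma_1$ is nonempty, which is where \eqref{commutecond} enters a second time. Finally, the footnote's caveat about reordering is automatic here, since consecutive factors of different types among the four listed commute except for the internal $z^{\gamma_2}\partial^{\gamma_2}$ and $\bar z^{\gamma_3}\bar\partial^{\gamma_3}$ blocks, which are already written in canonical order.
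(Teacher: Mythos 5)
Your proof is correct, and it reorganizes the paper's argument in a way worth noting. The paper proceeds by directly exhibiting a factorization in three cases according to the relative order of $\alpha_1,\beta_1,\alpha_2$ (each case being a particular admissible choice of what you call $\gamma_1$: namely $0$, $\alpha_1$ and $\beta_1$ respectively). You instead compute the canonical form of the general product once and for all, reduce the problem to the linear system $\gamma_1+\gamma_2=\alpha_1$, $\gamma_1+\gamma_3=\beta_1$, $\gamma_2+\gamma_4=\alpha_2$, $\gamma_3+\gamma_4=\beta_2$, observe that consistency over $\Z$ is \emph{equivalent} to \eqref{commutecond}, and then verify that the interval $\max(0,\alpha_1-\alpha_2)\le\gamma_1\le\min(\alpha_1,\beta_1)$ of nonnegative solutions is nonempty. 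This buys a little more than the paper's proof: it describes \emph{all} factorizations of the prescribed shape, and it makes transparent that the rotation-commuting condition is exactly the obstruction, rather than merely sufficient for the particular factorizations written down. Both arguments rest on the same key observation, which you correctly isolate: the rewriting of the product into canonical form only ever moves commuting generators past one another ($[\partial,\bar z]=[\bar\partial,z]=0$, etc.), so no lower-order correction terms appear and the matching of exponents is exact.

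One small inaccuracy in your closing remark: it is not true that consecutive factors of different types commute. For instance $[z\partial,z\bar z]=z\bar z\ne 0$ and $[\partial\bar\partial,z\partial]=\partial\bar\partial\ne 0$; only the pair $z^{\gamma_2}\partial^{\gamma_2}$ and $\bar z^{\gamma_3}\bar\partial^{\gamma_3}$ genuinely commutes among the four listed types. This does not affect your proof, since your construction already produces the factors in the order demanded by the statement, but the footnote's caveat is not as ``automatic'' as you suggest.
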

\begin{proof}
By Lemma \ref{parametercondcanbasisrot} and equation \eqref{commutecond}, 
\begin{equation*}
\label{commutecondprop}
\beta_2=\alpha_2+\beta_1-\alpha_1 \geq 0.
\end{equation*}
Thus, we can write 
\begin{equation*}
\label{basiselementsthreeparreduction}
B=z^{\alpha_1}\bar z^{\beta_1} \partial^{\alpha_2}\bar \partial^{\beta_2}=z^{\alpha_1}\bar z^{\beta_1} \partial^{\alpha_2}\bar \partial^{\alpha_2+\beta_1-\alpha_1}.
\end{equation*}
There are six cases to consider, determined by the relative order of the integers $\alpha_1, \beta_1, \alpha_2 \in \N$.
\begin{enumerate}
\item $\alpha_1 \leq \alpha_2$.
In this case, we may write
\begin{equation*}
B=z^{\alpha_1}\bar z^{\beta_1} \partial^{\alpha_2-\alpha_1+\alpha_1}\bar \partial^{\alpha_2+\beta_1-\alpha_1}=z^{\alpha_1}\partial^{\alpha_1}\bar{z}^{\beta_1}\bar{\partial}^{\beta_1}\partial^{\alpha_2-\alpha_1}\bar \partial^{\alpha_2-\alpha_1}.
\end{equation*}
\item $\alpha_1 \leq \beta_1 $. In this case, we may write
\begin{equation*}
B=z^{\alpha_1}\bar z^{\beta_1+\alpha_1-\alpha_1} \partial^{\alpha_2}\bar \partial^{\alpha_2+\beta_1-\alpha_1}=z^{\alpha_1}\bar z^{\alpha_1}\bar z^{\beta_1-\alpha_1}\bar \partial^{\beta_1-\alpha_1}\partial^{\alpha_2}\bar \partial^{\alpha_2}.
\end{equation*} 
\item $\alpha_1 > \alpha_2$ and $\alpha_1 > \beta_1$. In this case, we may write
\begin{align*}
B &=z^{\alpha_1+\beta_1-\beta_1}\bar z^{\beta_1} \partial^{\alpha_2+\beta_1-\beta_1+\alpha_1-\alpha_1}\bar \partial^{\alpha_2+\beta_1-\alpha_1} \\ &=z^{\beta_1}\bar z^{\beta_1}z^{\alpha_1-\beta_1}\partial^{\alpha_1-\beta_1}\partial^{\alpha_2+\beta_1-\alpha_1}\bar \partial^{\alpha_2+\beta_1-\alpha_1}.
\end{align*}  
\end{enumerate}
We have covered all the six cases. 
\end{proof}

\begin{cor}
\label{basisrotgeneralsums}
Any element in the $K$-subalgebra $\mathfrak{R}_2$ of $A_2(K)$ is a sum of terms all of which can be written as a product in $z^{\gamma_1}\bar z^{\gamma_1}, z^{\gamma_2}\partial^{\gamma_2}, \bar z^{\gamma_3} \bar \partial^{\gamma_3}$ and $\partial^{\gamma_4}\bar \partial^{\gamma_4}$ for some numbers $\gamma_i \in \N$, factored in the order in which these elements were listed.
\end{cor}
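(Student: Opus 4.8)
The plan is to read this off from the two preceding results with essentially no extra work. First I would invoke Proposition~\ref{spanR2} (equivalently Corollary~\ref{basisR2}) to put an arbitrary $D \in \mathfrak{R}_2$ into canonical form
\begin{equation*}
D = \sum_{j=1}^{n} c_j\, z^{\alpha_{1,j}} \bar z^{\beta_{1,j}} \partial^{\alpha_{2,j}} \bar \partial^{\beta_{2,j}}, \qquad c_j \in K,
\end{equation*}
where every summand $B_j = z^{\alpha_{1,j}} \bar z^{\beta_{1,j}} \partial^{\alpha_{2,j}} \bar \partial^{\beta_{2,j}}$ is a canonical basis element whose parameters satisfy \eqref{commutecond}. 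By Lemma~\ref{parametercondcanbasisrot} each such $B_j$ commutes with rotations, so Proposition~\ref{basisrot} applies to it.

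Next I would apply Proposition~\ref{basisrot} term by term: each $B_j$ can be rewritten as a product of factors of the four prescribed types $z^{\gamma_1}\bar z^{\gamma_1}$, $z^{\gamma_2}\partial^{\gamma_2}$, $\bar z^{\gamma_3}\bar\partial^{\gamma_3}$, $\partial^{\gamma_4}\bar\partial^{\gamma_4}$, listed in that order, with exponents $\gamma_i = \gamma_i(j)$ depending on the summand. Substituting these factorizations back into the displayed sum exhibits $D$ as a $K$-linear combination of products of the required shape; the scalar $c_j$ is simply carried along with its term (or, if one prefers every term to be literally a product of the listed generators, absorbed by replacing the leading factor $z^0\bar z^0 = \id$ by the scalar $c_j$). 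That is exactly the assertion of the corollary.

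There is no genuine obstacle here — the statement is a formal consequence of Corollary~\ref{basisR2} and Proposition~\ref{basisrot}. The only point deserving a word is the case bookkeeping inside Proposition~\ref{basisrot}: one should be satisfied that the displayed cases really do cover all relative orderings of $\alpha_1,\beta_1,\alpha_2 \in \N$ and that in each case every exponent appearing in the factorization is nonnegative — which is precisely the role of the relation \eqref{commutecond}, guaranteeing $\beta_2 = \alpha_2 + \beta_1 - \alpha_1 \geq 0$. Granting Proposition~\ref{basisrot} as stated, the corollary requires nothing further.
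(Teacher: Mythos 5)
Your argument is correct and is exactly the paper's route: the paper proves this corollary by citing Corollary \ref{basisR2} together with Proposition \ref{basisrot}, which is precisely the combination you spell out (expand $D$ in the canonical basis elements satisfying \eqref{commutecond}, then factor each term via Proposition \ref{basisrot}). Your additional remarks about carrying the scalars $c_j$ and about the nonnegativity $\beta_2=\alpha_2+\beta_1-\alpha_1\geq 0$ are consistent with the paper's treatment and add nothing that conflicts with it.
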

\begin{proof}
The statement follows from Corollary \ref{basisR2} and Proposition \ref{basisrot}.
\end{proof}

In the conclusion that follows, we will make use of the well-known formula 
\begin{equation*}
z^n\partial^n=\sum_{m=0}^{n} s(n,m) (z\partial)^m.
\end{equation*}
It expresses the differential operators of form $z^n \partial^n$ and $\bar z^n \bar \partial^n$ as polynomials in $z \partial$ and $\bar z \bar \partial$, respectively. The coefficients of the corresponding polynomials are the Stirling numbers $s(n,m)$ of the first kind, implicitly given by
\begin{equation*}
x^{\underline{n}}=x(x-1)\ldots(x-n+1)=\sum_{m=0}^{n}s(n,m)x^m,
\end{equation*}
where $x^{\underline{n}}$ is the falling factorial.

\begin{thm}
The elements $z\bar z, z \partial, \bar z \bar \partial$ and $\partial \bar \partial$ in $A_2(K)$ is a generating set for the $K$-subalgebra $\mathfrak{R}_2$ of $A_2$ that consists of all elements in $A_2$ that commute with rotations.
\end{thm}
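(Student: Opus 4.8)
The plan is to combine the structural results just established with the Stirling-number identity to realize every generator of $\mathfrak{R}_2$ from the claimed list. By Corollary \ref{basisrotgeneralsums}, every element of $\mathfrak{R}_2$ is a $K$-linear combination of products of the four types of factors $z^{\gamma_1}\bar z^{\gamma_1}$, $z^{\gamma_2}\partial^{\gamma_2}$, $\bar z^{\gamma_3}\bar\partial^{\gamma_3}$, $\partial^{\gamma_4}\bar\partial^{\gamma_4}$. So it suffices to show that each of these four families lies in the $K$-subalgebra generated by $z\bar z$, $z\partial$, $\bar z\bar\partial$, $\partial\bar\partial$. For the first family this is immediate: $z^{\gamma}\bar z^{\gamma}=(z\bar z)^{\gamma}$. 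For the second and third, the Stirling identity $z^n\partial^n=\sum_{m=0}^{n}s(n,m)(z\partial)^m$ (and its conjugate for $\bar z^n\bar\partial^n$) exhibits them as polynomials in $z\partial$ and $\bar z\bar\partial$ respectively.

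The one remaining case, $\partial^{\gamma}\bar\partial^{\gamma}$, is where I expect the actual work to lie. The natural guess is that $\partial^n\bar\partial^n$ is a polynomial in the single generator $\partial\bar\partial$ — but this is false in $A_2$, since $\partial$ and $\bar\partial$ commute, so $\partial^n\bar\partial^n=(\partial\bar\partial)^n$ already; thus in fact $\partial^{\gamma}\bar\partial^{\gamma}=(\partial\bar\partial)^{\gamma}$ directly, with no Stirling expansion needed. That disposes of all four families, and hence, invoking that $\mathfrak{R}_2$ is a $K$-algebra (closed under sums and products) together with Corollary \ref{basisrotgeneralsums}, every element of $\mathfrak{R}_2$ lies in the subalgebra generated by $z\bar z,z\partial,\bar z\bar\partial,\partial\bar\partial$. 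The reverse inclusion — that this subalgebra is contained in $\mathfrak{R}_2$ — follows because each of $z\bar z,z\partial,\bar z\bar\partial,\partial\bar\partial$ is a canonical basis element satisfying \eqref{commutecond}, hence commutes with rotations by Lemma \ref{parametercondcanbasisrot}, and $\mathfrak{R}_2$ is closed under the ring operations.

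In short, the proof is essentially bookkeeping: write an arbitrary $D\in\mathfrak{R}_2$ in the product form guaranteed by Corollary \ref{basisrotgeneralsums}, replace $z^{\gamma_1}\bar z^{\gamma_1}$ by $(z\bar z)^{\gamma_1}$, replace $z^{\gamma_2}\partial^{\gamma_2}$ and $\bar z^{\gamma_3}\bar\partial^{\gamma_3}$ by their Stirling expansions in $z\partial$ and $\bar z\bar\partial$, and replace $\partial^{\gamma_4}\bar\partial^{\gamma_4}$ by $(\partial\bar\partial)^{\gamma_4}$. The only subtlety worth flagging explicitly is the direction of the containment one is proving and the need to note that $\mathfrak{R}_2$ really is closed under multiplication (already observed in the text) so that the generated subalgebra sits inside it; beyond that, there is no genuine obstacle, only the need to assemble the pieces carefully.
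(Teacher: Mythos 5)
Your proof is correct and follows essentially the same route as the paper: invoke Corollary \ref{basisrotgeneralsums}, observe that $z^{\gamma}\bar z^{\gamma}=(z\bar z)^{\gamma}$ and $\partial^{\gamma}\bar\partial^{\gamma}=(\partial\bar\partial)^{\gamma}$ directly, and use the Stirling-number identity for the two middle factor types. The only difference is that you also spell out the reverse inclusion via Lemma \ref{parametercondcanbasisrot}, which the paper leaves implicit since it was established earlier.
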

\begin{proof}
By Corollary \ref{basisrotgeneralsums}, each element in $\mathfrak{R}_2$ is a linear combination of terms that are products in $z^{\gamma_1}\bar z^{\gamma_1}, z^{\gamma_2}\partial^{\gamma_2}, \bar z^{\gamma_3} \bar \partial^{\gamma_3}$ and $\partial^{\gamma_4}\bar \partial^{\gamma_4}$ for some integers $\gamma_i \in \N$, taken in the order in which these elements were written. It is clear that the first and the last of these four expressions are in the algebra generated by $z\bar z, z\partial,\bar z \bar \partial$ and $\partial \bar \partial$, and by the formula preceding this statement, so are the middle two.     
\end{proof}

We shall denote the $K$-subalgebra of $A_2(K)$ that is generated by the elements $z \partial, \bar z \bar \partial$ and $\partial \bar \partial$ by the symbol $\mathfrak{H}_2=\mathfrak{H}_2(K)$. It is a subalgebra that is strictly contained in the algebra $\mathfrak{R}_2$ of all elements in $A_2(K)$ that commute with rotations. The generators of this algebra satisfy the commutator relations $[z \partial, \bar z \bar \partial]=0$ and $[\partial \bar \partial, z \partial] \ = [\partial \bar \partial, \bar z \bar \partial]= \partial \bar \partial$. We can also then add that the four dimensional space spanned by $1,z\partial, \bar z \bar \partial$ and $\partial \bar \partial $ is closed under the bracket. More specifically, let 
\begin{equation*} 
D_1=a_{1}+a_{2}z\partial+a_{3}\bar z \bar \partial + a_{4}\partial \bar \partial, \quad D_2=b_{1}+b_{2}z\partial+b_{3}\bar z \bar \partial + b_{4}\partial \bar \partial,
\end{equation*}
for some $a_{i},b_i \in K$. Then
\begin{equation*}
[D_1,D_2]=\gamma \partial\bar \partial,
\end{equation*}
where 
\begin{equation*}
\gamma=a_{4}(b_{2}+b_{3})-b_{4}(a_{2}+a_{3}).
\end{equation*}
This makes the set of such elements into a Lie subalgebra of $\mathfrak{R}_2 \subset A_2$. 

Before we proceed with some preliminary structural accounts of $K[z,\bar z]$, it can also be worth to mention that the two rings $K[z,\bar z]$ and $K[\partial, \bar \partial]$ can be identified by sending $z$ to $\partial$ and $\bar z$ to $\bar \partial$. In particular, we may identify $K[\partial \bar \partial]$ as a subring of $K[\partial, \bar \partial]$ and treat this ring as a module over the former. Thus, the structure theorems that we put forward for $K[z,\bar z]$ in the following sections do in many cases pass on to the ring $K[\partial,\bar \partial]$. 

\bigskip 

\section{Harmonic gradings of the polynomial ring $K[z,\bar z]$}

The subring $K^{\star}=K[|z|^2]$ of $K[z,\bar z]$ can be identified with the ring $K[x]$ in one indeterminant and contains the element      
\begin{equation}
\label{binom}
(1-|z|^2)^n=\sum_{j=0}^{n}\binom{n}{j}|z|^{2j}(-1)^{j}, \quad n \in \N.
\end{equation} 
The elements in \eqref{binom} are linearly independent over $K$ and spans $K[|z|^2]$. On another note,
\begin{equation*}
(1-|z|^2)^n (1-|z|^2)^m=(1-|z|^2)^{n+m}, \quad n,m \in \N.
\end{equation*} 
This gives $K[|z|^2]$ the structure of a graded ring 
\begin{equation}
\label{gradedringonevariable}
K[|z|^2] = \bigoplus_{n=0}^{\infty} K \cdot (1-|z|^2)^n.
\end{equation} 
The polynomial ring $K[z,\bar{z}]$ also carries a grading as a module over this ring, that is compliant with \eqref{gradedringonevariable} and that expresses this ring in terms of certain generalized harmonic polynomials or functions. We will give it towards the end of the current section, following a few preliminary notions and some basic structural accounts of the polynomial ring $K[z,\bar z]$.

In relation to the operator,  
\begin{equation}
\label{angularder}
A=z\partial-\bar{z} \bar \partial \in \mathfrak{H}_2 \subset A_2(K),
\end{equation}
we introduce the collection of projection operators
\begin{equation}
\label{projectionmaps}
\pi_j^S=\prod_{\substack{k=1 \\ k \neq j}}^n
\frac{A-m_k}{m_j-m_k} \in \mathfrak{H}_2(K),
\end{equation} 
for a finite collection $S$ of distinct integers $m_1,m_2,\ldots,m_n \in \Z$. We will also write $\pi_j=\pi_j^S$ when the underlying set of integers is clear from context. In the fairly common setting of smooth functions on the complex domain, where the complex or Wirtinger derivatives serve to replace $\partial$ and $\bar \partial$, the operator $A$ is also referred to as the angular derivative \cite{OLip,OW2}. Its name stems from the corresponding form that this operator takes in a shift to polar coordinates.  

We will also reserve the symbol $\xi$ for exponentiation of the polynomials $z$ and $\bar z$, and define 
\begin{equation}
\label{xisymbol}
\xi_m(z) = \left\{ 
\begin{array}{ccc}
z^m, &\quad m \geq 0 \\

\bar{z}^{|m|}, &\quad m < 0.  
\end{array}\right.
\end{equation}
In regard to the current setting, there is a slight abuse of language involved here. We will stick with this notation however, while being both convenient and well adapted to later settings. 

A straightforward calculation shows that the operator $A$ in \eqref{angularder} commutes with elements in the ring $K[|z|^2] \subset A_2(K)$, and that
\begin{equation}
\label{angderactionhom}
Ap(|z|^2)\xi_m(z)=mp(|z|^2)\xi_m(z),
\end{equation} 
for polynomials $p(|z|^2) \in K[|z|^2]$ and integers $m \in \Z$. In particular,  
\begin{equation*}
\pi_{i}p(|z|^2)\xi_{m_j}(z) = \delta_{ij}p(|z|^2)\xi_{m_j}(z), \quad 1 \leq i,j \leq n.
\end{equation*}
For the next statement, we introduce the polynomials  
\begin{equation}
\label{bases}
e_{m,n}(z,\bar{z})=(1-|z|^2)^nz^m,
\end{equation}
and set $e_{-m,n}(z,\bar{z})=\bar{e}_{m,n}(z,\bar{z})=e_{m,n}(\bar{z},z)$ for $m,n \in \N$ in accordance with \eqref{xisymbol}, and write $\mathcal{E}$ for their union. We will refer to the symbols $e_{m,n}$ and $\xi_m$ frequently throughout this text. 
 
\begin{lemma}
\label{lindependence.k}
The set $\mathcal{E}$ is linearly independent over $K$. 
\end{lemma}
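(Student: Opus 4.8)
The plan is to prove linear independence of $\mathcal{E}$ by grouping its elements according to the degree $m$ of the monomial $\xi_m$ they carry, and then arguing separately within each group. Concretely, suppose a finite $K$-linear combination of elements of $\mathcal{E}$ vanishes. Every element of $\mathcal{E}$ has the form $(1-|z|^2)^n\xi_m(z)$ for some $n\in\N$ and $m\in\Z$ (with $e_{m,n}$ giving $m\geq 0$ and the conjugates $e_{-m,n}$ giving $m<0$). First I would apply the projection operators $\pi_j^S$ from \eqref{projectionmaps}, built from the angular derivative $A$ in \eqref{angularder}: by \eqref{angderactionhom} the operator $A$ acts on $(1-|z|^2)^n\xi_m(z)$ as multiplication by $m$, since $(1-|z|^2)^n\in K[|z|^2]$. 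Hence for a set $S$ of distinct integers containing all the degrees $m$ appearing in the given combination, the projections $\pi_j^S$ isolate, from the vanishing combination, the sub-sum corresponding to a single fixed degree $m_j$. This reduces the problem to showing: for each fixed $m\in\Z$, the polynomials $(1-|z|^2)^n\xi_m(z)$, $n\in\N$, are linearly independent over $K$.

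For the reduced problem, fix $m\geq 0$ (the case $m<0$ being identical after conjugation, which is a $K$-linear bijection of $K[z,\bar z]$). A relation $\sum_{n} c_n (1-|z|^2)^n z^m = 0$ can be divided through by $z^m$ inside the integral domain $K[z,\bar z]$, leaving $\sum_n c_n (1-|z|^2)^n = 0$ in $K[|z|^2]$. But \eqref{gradedringonevariable} — or equivalently the stated fact right after \eqref{binom} that the elements $(1-|z|^2)^n$ are linearly independent over $K$ and span $K[|z|^2]$ — forces all $c_n=0$. Tracking this back through the projections, all coefficients in the original combination vanish, which is the claim.

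The only genuine point requiring care is the justification that the projection step really does separate the degrees cleanly: one must note that $\pi_j^S$ is a polynomial in $A$ with $K$-coefficients, that it therefore preserves each "degree-$m$ component" and acts on it by the scalar $\prod_{k\neq j}(m-m_k)/(m_j-m_k)$, which equals $\delta_{mj}$ when $m\in S$ — exactly the displayed identity $\pi_i p(|z|^2)\xi_{m_j} = \delta_{ij} p(|z|^2)\xi_{m_j}$ recorded just before the lemma. Since only finitely many degrees occur in any finite linear combination, we may always take $S$ large enough to contain them all, so this causes no difficulty. I expect the main (very mild) obstacle to be purely bookkeeping: setting up the indexing so that the conjugate terms $e_{-m,n}$ and the terms $e_{m,n}$ are handled uniformly under the $\xi_m$ notation, and invoking that conjugation $f(z,\bar z)\mapsto f(\bar z,z)$ is a $K$-linear ring automorphism so that the $m<0$ case follows from the $m>0$ case; the $m=0$ case is just linear independence of $\{(1-|z|^2)^n\}$ directly.
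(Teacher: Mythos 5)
Your proposal is correct and follows essentially the same route as the paper's own proof: apply the projection operators $\pi_j^S$ built from the angular derivative to isolate the terms with a fixed $m$, cancel the factor $\xi_m(z)$, and invoke the linear independence of the powers $(1-|z|^2)^n$ over $K$. The extra care you take about conjugation and the scalar action of $\pi_j^S$ is sound but not needed beyond what the paper already records before the lemma.
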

\begin{proof}
Let $(m_i,n_i)$ be $k$ distinct sets of pairs in $\Z \times \N$ for $1 \leq i \leq k$. Suppose that 
\begin{equation*}
0=c_{m_1,n_1}e_{m_1,n_1}(z,\bar z)+c_{m_2,n_2}e_{m_2,n_2}(z,\bar z)+\ldots+c_{m_k,n_k}e_{m_k,n_k}(z,\bar z),
\end{equation*}
for some $c_{m_i,n_i} \in  K$. Let $S$ be the set of such integers $m_i$ for $1 \leq i \leq k$, and let $m_l \in S$. By applying the projection operator $\pi_l=\pi_l^S$ to both sides of the last expression, we get that
\begin{equation*}
0=\big(\delta_{m_l,m_1}c_{m_1,n_1}(1-|z|^2)^{n_1}+\ldots+\delta_{m_l,m_k}c_{m_k,n_k}(1-|z|^2)^{n_k}\big)\xi_{m_l}(z).
\end{equation*}
The rightmost factor in this last expression may be cancelled, and
\begin{equation*}
0=\delta_{m_l,m_1}c_{m_1,n_1}(1-|z|^2)^{n_1}+\ldots+\delta_{m_l,m_k}c_{m_k,n_k}(1-|z|^2)^{n_k}.
\end{equation*} 
Since these last terms are linearly independent over $K$, we get in particular that $c_{m_l,n_l}=0$. As $m_l \in S$ was arbitrary, we see that $c_{m_1,n_1}=\ldots=c_{m_k,n_k}=0$.  
\end{proof}

For the next lemma, note that $\xi_k(z) \xi_{-l}(z)=z^k \bar z^l=|z|^{2l}\xi_{k-l}(z)$ for $k \geq l \geq 0$. 

\begin{lemma}
\label{span}
The set $\mathcal{E}$ spans $K[z,\bar{z}]$. 
\end{lemma}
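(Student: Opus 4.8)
The plan is to combine two structural facts already available. First, $K[z,\bar z]$ is generated as a module over $K[|z|^2]$ by the monomials $\xi_m(z)$, $m\in\Z$: by the identity recorded just before the lemma, $z^a\bar z^b=|z|^{2\ell}\xi_{a-b}(z)$ with $\ell=\min(a,b)$, which one checks directly against \eqref{xisymbol} in the two cases $a\geq b$ and $a<b$. Second, the powers $(1-|z|^2)^n$, $n\in\N$, span the ring $K[|z|^2]$ over $K$, as recorded immediately after \eqref{binom} (they form a triangular family with respect to the degree filtration of $K[|z|^2]\cong K[x]$). Multiplying a representation of the first kind by one of the second will express every monomial in terms of the products $(1-|z|^2)^n\xi_m(z)$, which are exactly the elements of $\mathcal{E}$.

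In detail, I would first note that it suffices to show each monomial $z^a\bar z^b$, $a,b\in\N$, lies in the $K$-span of $\mathcal{E}$, since such monomials form a $K$-basis of $K[z,\bar z]$. Writing $\ell=\min(a,b)$ and $m=a-b$, we have $z^a\bar z^b=|z|^{2\ell}\xi_m(z)$, and since $|z|^{2\ell}\in K[|z|^2]$ there are scalars $c_0,\dots,c_\ell\in K$ with $|z|^{2\ell}=\sum_{n=0}^{\ell}c_n(1-|z|^2)^n$. Multiplying through by $\xi_m(z)$ gives
\[ z^a\bar z^b=\sum_{n=0}^{\ell}c_n(1-|z|^2)^n\xi_m(z)=\sum_{n=0}^{\ell}c_n\,e_{m,n}(z,\bar z), \]
a finite $K$-linear combination of elements of $\mathcal{E}$, where I use that $e_{m,n}=(1-|z|^2)^nz^m$ for $m\geq 0$ and $e_{m,n}=(1-|z|^2)^n\bar z^{|m|}$ for $m<0$ according to \eqref{bases} and \eqref{xisymbol}. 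Hence $\mathcal{E}$ spans $K[z,\bar z]$.

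There is no genuine obstacle here; the only points deserving care are the case check for the identity $z^a\bar z^b=|z|^{2\min(a,b)}\xi_{a-b}(z)$ when $m=a-b$ is negative, and the bookkeeping that matches $(1-|z|^2)^n\xi_m(z)$ with the symbol $e_{m,n}$ for negative $m$. Combined with Lemma \ref{lindependence.k}, this argument also shows that $\mathcal{E}$ is in fact a $K$-basis of $K[z,\bar z]$, which is presumably the form in which it will be used later.
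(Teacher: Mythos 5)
Your argument is correct and is essentially the paper's own proof: both reduce to monomials $z^a\bar z^b=|z|^{2\ell}\xi_{a-b}(z)$ and then expand $|z|^{2\ell}$ in the powers $(1-|z|^2)^j$ (the paper just writes the binomial coefficients explicitly via $|z|^{2l}=(-1)^l(1-|z|^2-1)^l$). No further comment is needed.
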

\begin{proof}
It suffices to note that 
\begin{equation*}
|z|^{2l}=(-1)^l(1-|z|^2-1)^l=\sum_{j=0}^{l} \binom{l}{j}(-1)^j(1-|z|^2)^j, \quad l \in \N,
\end{equation*}
and that each $p(z,\bar{z}) \in K[z,\bar{z}]$ is a finite linear sum of terms of the form $|z|^{2l}\xi_m(z)$ for some $l \in \N$ and $m \in \Z$.
\end{proof}

The identification of $K^{\star}=K[|z|^2]$ as a subring of $K[z,\bar{z}]$ realizes the last as a module over $K^{\star}$. The last two lemmas also show that the homogeneous terms $z^m$ together with their conjugates $\bar z^m$ form a set of free generators for this module.
\begin{prop}
\label{linearindependencehomparts}
The homogeneous parts $z^m$ and $\bar{z}^m$ form a set of free generators for the $K^{\star}$-module $K[z,\bar z]$. 
\end{prop}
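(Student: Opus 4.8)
The plan is to assemble this proposition directly from Lemma~\ref{lindependence.k} and Lemma~\ref{span}, once we recast the statement in module-theoretic language. Recall that the monomials $\xi_m(z)$ for $m \in \Z$ are precisely the terms $z^m$ (for $m \geq 0$) and $\bar z^{|m|}$ (for $m<0$), and that $e_{m,n}(z,\bar z) = (1-|z|^2)^n \xi_m(z)$ in the notation of \eqref{bases} and \eqref{xisymbol}. Since $K^{\star} = K[|z|^2]$ is graded as in \eqref{gradedringonevariable}, with the powers $(1-|z|^2)^n$ forming a $K$-basis, a generic element of the free module $\bigoplus_{m \in \Z} K^{\star} \cdot \xi_m(z)$ expands, after writing each coefficient in $K^{\star}$ in the basis $\{(1-|z|^2)^n\}_{n \in \N}$, as a finite $K$-linear combination of the elements $e_{m,n}(z,\bar z) \in \mathcal{E}$. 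Conversely every element of $\mathcal{E}$ arises this way. So the two assertions ``$\{\xi_m\}_{m \in \Z}$ generates $K[z,\bar z]$ as a $K^{\star}$-module'' and ``$\mathcal{E}$ spans $K[z,\bar z]$ over $K$'' are equivalent, and likewise ``$\{\xi_m\}_{m \in \Z}$ is a free $K^{\star}$-basis'' is equivalent to ``$\mathcal{E}$ is $K$-linearly independent''.

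Concretely, I would argue as follows. First, for the generating property: given $p(z,\bar z) \in K[z,\bar z]$, Lemma~\ref{span} writes it as $p = \sum_{i} c_i e_{m_i,n_i}(z,\bar z) = \sum_i c_i (1-|z|^2)^{n_i} \xi_{m_i}(z)$; collecting the terms with a common value of $m$ gives $p = \sum_{m} q_m(|z|^2)\, \xi_m(z)$ with $q_m \in K^{\star}$, which exhibits $p$ in the $K^{\star}$-span of $\{\xi_m\}_{m \in \Z}$. Second, for freeness: suppose $\sum_{m \in S} q_m(|z|^2)\, \xi_m(z) = 0$ for some finite set $S \subset \Z$ and $q_m \in K^{\star}$. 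Expanding each $q_m$ in the basis $(1-|z|^2)^n$ of \eqref{gradedringonevariable} turns this into a $K$-linear relation among elements of $\mathcal{E}$, which by Lemma~\ref{lindependence.k} forces all coefficients to vanish, hence every $q_m = 0$. Thus the generators are free. Alternatively one can invoke the projection operators $\pi_j^S$ from \eqref{projectionmaps} directly, exactly as in the proof of Lemma~\ref{lindependence.k}: applying $\pi_l^S$ to the relation isolates the single term $q_{m_l}(|z|^2)\,\xi_{m_l}(z)$, which can be cancelled by the nonzero factor $\xi_{m_l}(z)$ in the integral domain $K[z,\bar z]$, yielding $q_{m_l} = 0$.

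I expect no serious obstacle here; the proposition is essentially a repackaging of the two preceding lemmas, and the only point requiring a word of care is the bookkeeping that translates between a $K^{\star}$-linear combination of the $\xi_m$ and a $K$-linear combination of the $e_{m,n}$ — specifically, that regrouping a finite sum by the value of $m$ and then re-expanding the $K^{\star}$-coefficients in the grading \eqref{gradedringonevariable} is a bijective correspondence between the two descriptions. Once that is noted, the proof reduces to two sentences citing Lemma~\ref{span} and Lemma~\ref{lindependence.k} respectively.
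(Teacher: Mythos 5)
Your proposal is correct and follows essentially the same route as the paper: the paper likewise deduces spanning from Lemma~\ref{span} and establishes freeness by expanding each $K^{\star}$-coefficient in the basis $\{(1-|z|^2)^n\}$ of \eqref{gradedringonevariable} and invoking the $K$-linear independence of $\mathcal{E}$ from Lemma~\ref{lindependence.k}. The alternative you sketch via the projection operators $\pi_j^S$ is also sound but unnecessary; the bookkeeping equivalence you flag is exactly the step the paper carries out.
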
   
\begin{proof}
We know from the previous Lemma \ref{span} that the union of such terms spans $K[z,\bar z]$ over $K^{\star}$. It remains to show that they are linearly independent over $K^{\star}$. 

We can express any polynomial $p(|z|^2)$ in $K[|z|^2]$ as 
\begin{equation*}
\label{polexpbinomscor}
p(|z|^2)=c_{1}(1-|z|^2)^{n_{1}}+\ldots+c_{k}(1-|z|^2)^{n_{k}},
\end{equation*}
for some distinct $n_{1},\ldots,n_{k} \in \N$ and $c_{1},\ldots,c_{k} \in K$. Thus, and for this particular polynomial $p \in K^{\star}$, we can write
\begin{equation*}
p \xi_{m}(z)=c_{1}e_{m,n_1}(z,\bar{z})+\ldots+c_{k}e_{m,n_k}(z,\bar{z}),
\end{equation*}
with $e_{m,n}(z,\bar z) \in K[z,\bar{z}]$ as in \eqref{bases} and $\xi_m(z)$ as in \eqref{xisymbol}. Let $m_1,\ldots,m_n \in \Z$ be distinct and suppose that 
\begin{equation*}
0=p_1\xi_{m_1}(z)+p_2\xi_{m_2}(z)+\ldots+p_k\xi_{m_k}(z),
\end{equation*}    
for some $p_1,\ldots,p_k \in K^{\star}$. In consideration of that above, the right hand side of this last expression is a sum of terms of the form $c_{i_{j}} \cdot e_{m_{i},n_{i_j}}(z,\bar z) \in K[z,\bar{z}]$ for some $c_{i_{j}} \in K$ and mutually distinct pairs $(m_{i},n_{i_j})$ in $\Z \times \N$. Since such elements are linearly independent over $K$, all the constants $c_{i_{j}}$ must be zero, and consequently, the same must be true of the elements $p_i \in K^{\star}$ for $1 \leq i \leq k$. 
\end{proof} 

The last result shows that any element in $K[z,\bar z]$ identifies with its coordinates in the ring $K^{\star}=K[|z|^2]$. 

\begin{cor}
Let $m_j \in \Z$ for $1 \leq j \leq n$ and suppose that there are $p_{m_j} \in K^{\star}$ such that
\begin{equation*}
p_{m_1}\xi_{m_1}(z)+p_{m_2}\xi_{m_2}(z)+ \ldots + p_{m_n}\xi_{m_n}(z)=0.
\end{equation*}
Then $p_{m_j}=0$ for $1 \leq j \leq n$.
\begin{proof}
This follows from Proposition \ref{linearindependencehomparts}. 
\end{proof}
\end{cor}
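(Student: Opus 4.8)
The plan is to deduce this directly from the linear independence half of Proposition~\ref{linearindependencehomparts}. We may assume the integers $m_1,\ldots,m_n$ to be pairwise distinct — the indexing $p_{m_j}$ presupposes as much, and in any event terms sharing a common exponent can be amalgamated beforehand, their coefficients adding within $K^{\star}$. With the $m_j$ distinct, the definition \eqref{xisymbol} shows that $\xi_{m_1}(z),\ldots,\xi_{m_n}(z)$ are $n$ distinct members of the set $\{z^m:m\in\N\}\cup\{\bar z^m:m\in\N\}$ of free generators of the $K^{\star}$-module $K[z,\bar z]$ furnished by Proposition~\ref{linearindependencehomparts}. A vanishing $K^{\star}$-linear combination of free generators has all coefficients zero, whence $p_{m_1}=\ldots=p_{m_n}=0$.

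Since the statement is essentially a repackaging of Proposition~\ref{linearindependencehomparts}, there is no genuine obstacle here; the substance was discharged in establishing that proposition (and, beneath it, Lemmas~\ref{lindependence.k} and~\ref{span}).

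Should one prefer an argument kept self-contained within this section, I would instead imitate the proof of Lemma~\ref{lindependence.k}: take $S=\{m_1,\ldots,m_n\}$, apply the projection operator $\pi_l^S$ of \eqref{projectionmaps} to both sides of the given relation, and invoke \eqref{angderactionhom} — which shows that $\pi_l^S$ annihilates every summand but the $l$-th and fixes that one — to arrive at $p_{m_l}\xi_{m_l}(z)=0$. As $\xi_{m_l}(z)$ is a nonzero monomial and $K[z,\bar z]$ is an integral domain, this forces $p_{m_l}=0$; letting $l$ range over $1,\ldots,n$ completes the proof.
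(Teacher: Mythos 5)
Your first paragraph is exactly the paper's own proof: the corollary is an immediate restatement of the free-generation statement in Proposition~\ref{linearindependencehomparts}, and the paper likewise just cites that proposition. Your remarks on amalgamating repeated indices and your alternative projection-operator argument are correct but not needed; the main line of reasoning coincides with the paper's.
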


We will also refer to the $K^{\star}$-module $K[z,\bar z]$ as the module $\mathcal{H}$, where $K[z,\bar z]$ is to be taken as the underlying set with multiplication from the left by elements in the ring $K^{\star}=K[|z|^2]$. As we have done on a few occasions already, we will also denote the image of $p(z,\bar z) \mapsto q(|z|^2)p(z,\bar z)$ by $qp(z,\bar z)$ for $q \in K^{\star}$ and $p(z,\bar z) \in K[z,\bar z]$.
\bigskip

The algebra $K[z,\bar z]$ also carries a positive grading that will provide a good overview of that which is to come, and that is given in terms of the finite dimen\-sional $K$-vector spaces 

\begin{equation}
\label{homogeneousterms}
\mathcal{G}_k=\bigoplus_{|m|+n=k} K \cdot e_{m,n}(z,\bar{z}).
\end{equation}
By the previous two lemmas, we can then write 
\begin{equation}
\label{modulegrading}
K[z,\bar z]=\bigoplus_{k=0}^{\infty} \mathcal{G}_k. 
\end{equation}
This makes $K[z,\bar z]$ into a graded module over the graded $K$-algebra $K[|z|^2]$ in \eqref{gradedringonevariable} that respects the grading of $K[|z|^2]$ in the sense that $(1-|z|^2)^n\mathcal{G}_k \subset \mathcal{G}_{n+k}$. The components of this grading will later be interpreted as certain spaces of generalized harmonic polynomials, to be defined over the coming sections. It can also provide some guidance to list the spanning elements of each component in the form of an inverted triangle. 

\section{The invariant action of $\mathfrak{R}_2$ on $\mathcal{H}$}

We saw in equation \eqref{angderactionhom} that the operator $A \in A_2(K)$ in \eqref{angularder} has a simple description relative to the $K^{\star}$-module $K[z,\bar z]$ or $\mathcal{H}$ as multiplication by numbers. If we write $T_{m,A}$ for the ordinary differential operator that takes a polynomial $f(x) \in K[x]$ to $mf(x) \in K[x]$ for $m \in \Z$, then \eqref{angderactionhom} says that 
\begin{equation*}
Ap(|z|^2)\xi_m(z)=\xi_m(z)(T_{m,A}p)(|z|^2)=\xi_m(z)mp(|z|^2), \quad m \in \Z, \quad p\in K^{\star}. 
\end{equation*}
Other examples like $\partial \bar \partial,\partial \bar \partial+z\partial+\bar z \bar \partial+1$ and $(1-|z|^2)\partial \bar \partial + \bar z \bar \partial$, that have appeared in connection with generalized harmonic functions, also show that the span of each of the generators $\xi_m(z)$ for the module $\mathcal{H}$ is left invariant under the action of such operators, and that their action relative to each of these submodules can be described in terms of a corresponding action by an ordinary differential operator on the polynomials in $K^{\star}$. In more precise terms, we ought to demand of a ``generalized harmonic operator`` $D$ that there exists a family of ordinary differential operators $T_{m,D}$ in the first Weyl-algebra $A_1$ over $K$, such that the following holds:
\begin{equation}
\label{homconditionR2}
Dp(|z|^2)\xi_{m}(z)=\xi_{m}(z)T_{m,D}p(|z|^2), \quad m \in \Z, \quad p \in K^{\star}, 
\end{equation} 
where $T_{m,D}p(|z|^2)$ means $(T_{m,D}p)(|z|^2)$.\footnote{The $K$-algebra $A_1$ has as its generators the operators $x$ and $d/dx$, and is a $K$-subalgebra of the endomorphism class on polynomials in $K[x]$.} An operator $D$ that satisfies \eqref{homconditionR2} must commute with rotations. Indeed, the module $\mathcal{H}$ is generated by the collection of elements $\{\xi_m(z)\}_{m=-\infty}^{\infty}$ over $K^{\star}$ and  
\begin{align}
\label{onlyifactionhomparts}
R_{\theta}D(p(|z|^2)\xi_m(z))&=R_{\theta}(T_{m,D}p(|z|^2)\xi_m(z))=e^{im\theta}T_{m,D}p(|z|^2)\xi_m(z) \\ \notag &=D(e^{im\theta}p(|z|^2)\xi_m(z))=DR_{\theta}p(|z|^2)\xi_m(z),
\end{align} 
for all $m \in \Z$ and $\theta \in [0,2\pi)$. It requires some more work to show that the converse is true, namely, that any operator that commutes with rotations must satisfy \eqref{homconditionR2}. Corollary \ref{basisrotgeneralsums} will be very useful in this respect. 

In this aim, we recall the Leibniz rule for differentiation
\begin{equation*}
d^j/dx^j f(x)g(x) = \sum_{i=0}^{j} \binom{j}{i} f^{(j-i)}(x)g^{(i)}(x).
\end{equation*} 
For the next lemma, we also recall the rising factorial or Pochhammer symbol 
\begin{equation*}
(a)_n=a(a+1)\ldots(a+n-1), 
\end{equation*}
for $a \in K$ and $n \in \N$. 
\begin{lemma}
\label{mthpartpartials}
Let $p(z,\bar z)$ be a polynomial in $K[z,\bar z]$ and let $q_m(z,\bar z)=\varphi_m\xi_m(z)$ be its projection onto the $K^{\star}$-submodule of $\mathcal{H}$ that is spanned by the single element $\xi_m(z)$ for $m \in \Z$. Set 
\begin{equation}
\label{coeffmthpartder}
\rho_m^{k,l}(|z|^2)=|z|^{2k}\varphi_m^{(l)}(|z|^2), \quad k,l \geq 0,
\end{equation}
for $m \in \Z$. Then
\begin{equation}
\label{mthpartder}
z^n\partial^{n}q_m(z,\bar z)=\sum_{l=0}^{n}\binom{n}{l}(m-l+1)_l\rho_{m}^{n-l,n-l}\xi_m(z),
\end{equation}
for $m,n \geq 0$, and
\begin{equation}
\label{negmthpartder}
z^n\partial^{n}q_m(z,\bar z)=\rho_m^{n,n}\xi_m(z),
\end{equation}
for $n \geq 0$ and $m < 0$. 
\end{lemma}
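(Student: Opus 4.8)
The plan is to compute $z^{n}\partial^{n}q_m$ directly from $q_m(z,\bar z)=\varphi_m(|z|^2)\xi_m(z)$ with $\varphi_m\in K^{\star}=K[x]$ (that $q_m$ has exactly this shape is what Proposition \ref{linearindependencehomparts} provides), splitting into the two cases $m\geq 0$ and $m<0$ forced by the definition \eqref{xisymbol} of $\xi_m$. Two elementary facts do all the work. First, for any $\varphi\in K[x]$ one has $\partial^{j}\varphi(z\bar z)=\bar z^{j}\varphi^{(j)}(z\bar z)$; this is an immediate induction on $j$, using $\partial(z\bar z)=\bar z$, $\partial\bar z=0$, and the chain rule applied to the one-variable polynomial $\varphi$. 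Second, $\partial^{l}z^{m}=(m-l+1)_l\,z^{m-l}$ for $m\geq 0$, where the Pochhammer symbol $(m-l+1)_l$ here coincides with the falling factorial $m(m-1)\cdots(m-l+1)$ and in particular is $0$ (and the corresponding term genuinely vanishes) as soon as $l>m$.

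For $m\geq 0$ I would apply the Leibniz rule recalled above to $\partial^{n}\bigl(\varphi_m(z\bar z)\cdot z^{m}\bigr)$, obtaining $\sum_{l=0}^{n}\binom{n}{l}\bigl(\partial^{n-l}\varphi_m(z\bar z)\bigr)\bigl(\partial^{l}z^{m}\bigr)=\sum_{l=0}^{n}\binom{n}{l}(m-l+1)_l\,\bar z^{n-l}\varphi_m^{(n-l)}(z\bar z)\,z^{m-l}$ by the two facts. Multiplying on the left by $z^{n}$ and collecting monomials through $z^{n}\bar z^{n-l}z^{m-l}=(z\bar z)^{n-l}z^{m}=|z|^{2(n-l)}z^{m}$ turns the sum into $\sum_{l=0}^{n}\binom{n}{l}(m-l+1)_l\,|z|^{2(n-l)}\varphi_m^{(n-l)}(|z|^2)\,z^{m}$, which is precisely the right-hand side of \eqref{mthpartder} once one reads $\rho_m^{n-l,n-l}=|z|^{2(n-l)}\varphi_m^{(n-l)}$ off \eqref{coeffmthpartder} and recalls $\xi_m(z)=z^{m}$.

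For $m<0$ the computation is shorter: now $\xi_m(z)=\bar z^{|m|}$ and $\partial\bar z^{|m|}=0$, so in the Leibniz expansion of $\partial^{n}\bigl(\varphi_m(z\bar z)\,\bar z^{|m|}\bigr)$ only the term carrying all $n$ derivatives on $\varphi_m(z\bar z)$ survives, giving $\bar z^{n}\varphi_m^{(n)}(z\bar z)\,\bar z^{|m|}$; multiplying on the left by $z^{n}$ yields $|z|^{2n}\varphi_m^{(n)}(|z|^2)\,\bar z^{|m|}=\rho_m^{n,n}\xi_m(z)$, which is \eqref{negmthpartder}.

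I do not expect a genuine obstacle here: the argument is a bare application of the Leibniz rule together with the conventions built into $\xi_m$. The only point that deserves a word is the factorial bookkeeping --- that $(m-l+1)_l=m(m-1)\cdots(m-l+1)$ and that both sides of \eqref{mthpartder} vanish for $l>m$, so that keeping the upper summation limit at $n$ does no harm; it is also worth remarking that the $m=0$ instance of \eqref{mthpartder} collapses to the same identity as \eqref{negmthpartder}.
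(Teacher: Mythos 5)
Your proposal is correct and follows essentially the same route as the paper: a Leibniz expansion of $\partial^{n}\bigl(\varphi_m(|z|^2)\xi_m(z)\bigr)$ using $\partial^{j}\varphi_m(|z|^2)=\bar z^{j}\varphi_m^{(j)}(|z|^2)$ and $\partial^{l}z^{m}=(m-l+1)_l z^{m-l}$, followed by multiplication by $z^{n}$ and the identification $z^{k}\bar z^{k}=|z|^{2k}$, with the $m<0$ case collapsing to the single surviving term. The only cosmetic difference is that the paper writes the intermediate monomials in the $\xi$-notation ($\xi_{l-n}(z)$, $\xi_{m-l}(z)$) rather than as explicit powers of $\bar z$ and $z$.
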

\begin{proof}
Let $m,n \in \N$ be non-negative. Then by Leibniz rule for differentiation, 
\begin{align}
\label{partial^nlemmainvariance}
\partial^n\varphi_m(|z|^2)\xi_m(z) &=\sum_{l=0}^{n}\binom{n}{l}\partial^{n-l}\varphi_m(|z|^2) \partial^l\xi_m(z) \\ \notag &=\sum_{l=0}^{n}\binom{n}{l}\xi_{l-n}(z)\varphi_m^{(n-l)}(|z|^2) (m-l+1)_l\xi_{m-l}(z).
\end{align}
The identity in \eqref{mthpartder} is then obtained by multiplying each of the two sides by $\xi_n(z)=z^n$, noting that $\xi_{k}(z)\xi_{-k}(z)=|z|^{2k}$ for $k \geq 0$. 

As for the identity in \eqref{negmthpartder}, we simply note that 
\begin{equation*}
\partial^n\varphi_{m}(|z|^2)\xi_{m}(z)=\xi_{m}(z)\partial^n\varphi_{m}(|z|^2)=\xi_{-n}(z)\xi_{m}(z)\varphi_m^{(n)}(|z|^2),
\end{equation*} 
for $n \geq 0$ and $m < 0$. 
\end{proof}
The last result shows that the span of each $\xi_m(z)$ over $K^{\star}$ is invariant under $z^n\partial^n$ for each $n \geq 0$. The corresponding result for the operators $\bar z^n \bar \partial^n$ is obtained via conjugation.

\begin{lemma}
\label{mthpartlaplacepowers}
Let $p(z,\bar z)$ be a polynomial in $K[z,\bar z]$ and let $q_m(z,\bar z)=\varphi_m\xi_m(z)$ be its projection onto the $K^{\star}$-submodule of $\mathcal{H}$ that is spanned by the single element $\xi_m(z)$ for $ m \in \Z$. Let $\rho_m^{k,l} \in K^{\star}$ be as in \eqref{coeffmthpartder} for $k,l\geq 0$ and $m \in \Z$. Then
\begin{equation}
\label{laplacepowermthhompol}
\partial^n\bar \partial^n q_m(z,\bar z)=\sum_{l=0}^{n}\binom{n}{l}\frac{(|m|+n)!}{(|m|+n-l)!}\rho_{m}^{n-l,2n-l}\xi_m(z),
\end{equation}
for $n \in \N$ and $m \in \Z$. 
\end{lemma}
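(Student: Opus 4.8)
The idea is to compute $\partial^n\bar\partial^n q_m$ by applying \emph{first} whichever of the two factors $\partial^n$, $\bar\partial^n$ leaves the monomial factor $\xi_m(z)$ of $q_m$ untouched, and only then applying the remaining factor via a single use of the Leibniz rule. Since $\partial$ and $\bar\partial$ commute we have $\partial^n\bar\partial^n=\bar\partial^n\partial^n$, so this reordering is harmless.

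First I would reduce to the case $m\ge 0$. The conjugation $f\mapsto\bar f$ on $K[z,\bar z]$ is a $K$-algebra involution that fixes $K^{\star}=K[|z|^2]$ pointwise, satisfies $\overline{\partial f}=\bar\partial\,\bar f$ and $\overline{\bar\partial f}=\partial\,\bar f$, and sends $q_m=\varphi_m\xi_m(z)$ to $\varphi_m\xi_{-m}(z)$, i.e.\ to the projection $q_{-m}$ with the \emph{same} radial part $\varphi_m$. Consequently $\overline{\partial^n\bar\partial^n q_m}=\partial^n\bar\partial^n q_{-m}$, while $|m|=|{-m}|$ and the polynomials $\rho_m^{k,l}$ of \eqref{coeffmthpartder} lie in $K^{\star}$ and depend only on $\varphi_m$; hence \eqref{laplacepowermthhompol} for the index $-m$ is exactly the conjugate of the same identity for $m$, and $m=0$ needs no separate treatment.

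So fix $m\ge 0$ and write $\xi_m(z)=z^m$. Since $z^m$ is independent of $\bar z$ and $\bar\partial\varphi_m(|z|^2)=z\,\varphi_m'(|z|^2)$, iterating gives $\bar\partial^n q_m=z^m\bar\partial^n\varphi_m(|z|^2)=z^{m+n}\varphi_m^{(n)}(|z|^2)$ (this step overlaps with the computation \eqref{partial^nlemmainvariance} in the proof of Lemma~\ref{mthpartpartials}). Now apply $\partial^n$, using the Leibniz rule together with $\partial^{n-l}z^{m+n}=\frac{(m+n)!}{(m+l)!}z^{m+l}$ and $\partial^l\varphi_m^{(n)}(|z|^2)=\bar z^{\,l}\varphi_m^{(n+l)}(|z|^2)$, to obtain
\[
\partial^n\bar\partial^n q_m=\sum_{l=0}^{n}\binom{n}{l}\frac{(m+n)!}{(m+l)!}\,z^{m+l}\bar z^{\,l}\,\varphi_m^{(n+l)}(|z|^2)=\sum_{l=0}^{n}\binom{n}{l}\frac{(m+n)!}{(m+l)!}\,|z|^{2l}\varphi_m^{(n+l)}(|z|^2)\,\xi_m(z),
\]
using $z^{m+l}\bar z^{\,l}=|z|^{2l}z^m=|z|^{2l}\xi_m(z)$. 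Substituting $l\mapsto n-l$ and recalling that $|z|^{2(n-l)}\varphi_m^{(2n-l)}(|z|^2)=\rho_m^{n-l,\,2n-l}$ and $m=|m|$ then yields \eqref{laplacepowermthhompol}; the case $m<0$ follows from the conjugation reduction above (or by the mirror computation, applying $\partial^n$ first and then $\bar\partial^n$ via Leibniz).

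I do not expect a genuine obstacle here: the only points demanding care are the commutativity of $\partial$ and $\bar\partial$ (which legitimises applying the ``transparent'' factor first), the correct factorial evaluation of the iterated monomial derivative $\partial^{n-l}z^{m+n}$, and the reindexing $l\mapsto n-l$ needed to line the exponents up with the indices of $\rho_m^{k,l}$ in \eqref{coeffmthpartder}.
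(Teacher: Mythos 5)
Your proof is correct and follows essentially the same route as the paper: both first apply $\bar\partial^n$ to get $z^{m+n}\varphi_m^{(n)}(|z|^2)$ and then expand $\partial^n$ by Leibniz, the only cosmetic difference being which factor receives the index $l$ (your reindexing $l\mapsto n-l$ lands on exactly the paper's summand), and both handle $m<0$ by conjugation.
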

\begin{proof}
Let $m,n \in \N$. Then 
\begin{align*}
\partial^n\bar \partial^n q_m(z,\bar z) &= \partial^n \xi_{m+n}(z)\varphi_m^{(n)}(|z|^2) \\ &=\sum_{l=0}^{n}\binom{n}{l} \partial^{n-l}\varphi_m^{(n)}(|z|^2)\partial^l\xi_{m+n}(z) \\ &= \sum_{l=0}^{n}\binom{n}{l}\xi_{l-n}(z)\varphi_m^{(2n-l)}(|z|^2)\frac{(m+n)!}{(m+n-l)!}\xi_{m+n-l}(z) \\ &=\sum_{l=0}^{n}\binom{n}{l}\frac{(m+n)!}{(m+n-l)!}\rho_m^{n-l,2n-l}\xi_m(z). 
\end{align*}
The case for negative $m < 0$ is obtained via conjugation.  
\end{proof}
The last lemma shows that the submodules $K^{\star} \xi_m(z) \subset \mathcal{H}$ are invariant under the action of $\partial^n \bar \partial^n$, for $m \in \Z$ and $n \in \N$. Making use of Corollary \ref{basisrotgeneralsums}, we can now give the converse statement to that which was raised in connection with \eqref{homconditionR2} at the beginning of this section. Namely, that any operator in $\mathfrak{R}_2 \subset A_2$ that commutes with rotations must satisfy \eqref{homconditionR2} with respect to some family of ordinary differential operators $T_{m,D}$ in the first Weyl-algebra $A_1$ over $K$.  
\begin{thm}
\label{mappingA2toA1}
Let $D$ belong to the $K$-subalgebra $\mathfrak{R}_2$ of $A_2(K)$ that consists of all operators in $A_2$ that commute with rotations. Then there exists a family of operators $\{T_{m,D}\}_{m=-\infty}^{\infty}$ contained in $A_1(K)$ such that \eqref{homconditionR2} holds.   
\end{thm}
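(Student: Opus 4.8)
The plan is to reduce the general case to the building blocks already handled in Lemmas \ref{mthpartpartials} and \ref{mthpartlaplacepowers}. By Corollary \ref{basisrotgeneralsums}, any $D \in \mathfrak{R}_2$ is a finite $K$-linear combination of operators of the form $E = (z\bar z)^{\gamma_1}(z^{\gamma_2}\partial^{\gamma_2})(\bar z^{\gamma_3}\bar \partial^{\gamma_3})(\partial^{\gamma_4}\bar \partial^{\gamma_4})$, the factors taken in that order. Since the family of operators satisfying \eqref{homconditionR2} is visibly closed under $K$-linear combinations --- if $D_1, D_2$ satisfy \eqref{homconditionR2} with families $\{T_{m,D_1}\}$, $\{T_{m,D_2}\}$, then $c_1 D_1 + c_2 D_2$ satisfies it with family $\{c_1 T_{m,D_1} + c_2 T_{m,D_2}\}$ --- it suffices to treat a single such product $E$.

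First I would observe that the family of operators satisfying \eqref{homconditionR2} is also closed under composition. Indeed, suppose $D_1$ and $D_2$ satisfy \eqref{homconditionR2} with families $\{T_{m,D_1}\}$ and $\{T_{m,D_2}\}$. Then for each $m \in \Z$ and $p \in K^{\star}$,
\begin{equation*}
D_1 D_2 \, p(|z|^2)\xi_m(z) = D_1 \big( \xi_m(z) (T_{m,D_2}p)(|z|^2) \big) = \xi_m(z) (T_{m,D_1} T_{m,D_2} p)(|z|^2),
\end{equation*}
so $D_1 D_2$ satisfies \eqref{homconditionR2} with family $\{T_{m,D_1} T_{m,D_2}\}$, a product of operators in $A_1(K)$ hence again in $A_1(K)$. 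Therefore it is enough to check \eqref{homconditionR2} for each of the four types of factor separately. The factor $(z\bar z)^{\gamma_1} = |z|^{2\gamma_1}$ acts by multiplication by a polynomial in $K^{\star}$, so it trivially satisfies \eqref{homconditionR2} with $T_{m}p(x) = x^{\gamma_1} p(x)$ for every $m$. The factor $\partial^{\gamma_4}\bar\partial^{\gamma_4}$ is handled by Lemma \ref{mthpartlaplacepowers}: formula \eqref{laplacepowermthhompol}, rewritten using $\rho_m^{k,l}(|z|^2) = |z|^{2k}\varphi_m^{(l)}(|z|^2)$ and reading off the coefficient $\varphi_m$ of $\xi_m(z)$, exhibits $\partial^{\gamma_4}\bar\partial^{\gamma_4}$ as acting by the $A_1$-operator $p \mapsto \sum_{l=0}^{\gamma_4}\binom{\gamma_4}{l}\frac{(|m|+\gamma_4)!}{(|m|+\gamma_4-l)!} x^{\gamma_4-l} p^{(2\gamma_4-l)}(x)$.

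For the two middle factors I would use Lemma \ref{mthpartpartials}. By that lemma (and its conjugate version for $\bar z^n \bar\partial^n$), applied to $p(z,\bar z) = \varphi_m(|z|^2)\xi_m(z)$ whose projection onto $K^{\star}\xi_m(z)$ is itself, the operators $z^{\gamma_2}\partial^{\gamma_2}$ and $\bar z^{\gamma_3}\bar\partial^{\gamma_3}$ each send $K^{\star}\xi_m(z)$ into itself and act there by the $A_1$-operators obtained from \eqref{mthpartder} and \eqref{negmthpartder} (with the roles of $m \geq 0$ and $m < 0$ swapped for the conjugate factor). Assembling the composition $E$ then gives $T_{m,E}$ as a product of these four $A_1$-operators, completing the argument. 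The one point demanding a little care --- and the main obstacle, such as it is --- is the bookkeeping of the two cases $m \geq 0$ and $m < 0$: Lemma \ref{mthpartpartials} records genuinely different formulas for $z^n\partial^n$ depending on the sign of $m$ (and symmetrically for $\bar z^n \bar\partial^n$), so one must verify that in every sign regime the resulting coefficient functions are honest polynomial-coefficient differential operators in $x$, i.e. lie in $A_1(K)$. This is immediate from the explicit shape of \eqref{coeffmthpartder}, \eqref{mthpartder}, \eqref{negmthpartder}, and \eqref{laplacepowermthhompol}, since every $\rho_m^{k,l}$ contributes a monomial $x^k$ times a derivative $p^{(l)}$, but it is the step where the "for all $m \in \Z$" clause of the theorem is actually earned.
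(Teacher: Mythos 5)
Your proposal is correct and follows essentially the same route as the paper's own proof: decompose $D$ via Corollary \ref{basisrotgeneralsums} into sums of ordered products of the four factor types, handle each factor by Lemmas \ref{mthpartpartials} and \ref{mthpartlaplacepowers} (with conjugation for the sign of $m$), and compose the resulting $A_1(K)$-operators. The only cosmetic difference is that you isolate the closure of condition \eqref{homconditionR2} under sums and composition as an explicit preliminary remark, whereas the paper carries out the composition factor by factor in its construction of $T_{m,D}$.
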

\begin{proof}
The statement follows from Lemma \ref{mthpartpartials} and Lemma \ref{mthpartlaplacepowers} with the use of Corollary \ref{basisrotgeneralsums}. A slightly more constructive account can be given as follows.

\bigskip

According to Corollary \ref{basisrotgeneralsums}, we can write $D \in \mathfrak{R}_2$ as a sum of terms that are products in $z^{\gamma_1}\bar z^{\gamma_1}, z^{\gamma_2}\partial^{\gamma_2}, \bar z^{\gamma_3} \bar \partial^{\gamma_3}$ and $\partial^{\gamma_4}\bar \partial^{\gamma_4}$ for some numbers $\gamma_1,\gamma_2,\gamma_3,\gamma_4 \in \N$, taken in the order in which these elements were listed. In other words, 
\begin{equation*}
D=\sum_{i=1}^{n}c_iz^{\gamma_{1,i}}\bar z^{\gamma_{1,i}} z^{\gamma_{2,i}}\partial^{\gamma_{2,i}} \bar z^{\gamma_{3,i}} \bar \partial^{\gamma_{3,i}}\partial^{\gamma_{4,i}}\bar \partial^{\gamma_{4,i}} \in \mathfrak{R}_2 \subset A_2(K),
\end{equation*} 
for some numbers $\gamma_{1,i},\gamma_{2,i},\gamma_{3,i},\gamma_{4,i} \in \N$. Let $m \in \N$ and set 
\begin{equation*}
q_{m}(z,\bar z)=\varphi_m\xi_m(z),
\end{equation*}
for some $\varphi_m \in K^{\star}$.  

Let us also fix a number $s \in \N$ in the interval $1 \leq s \leq n$. From Lemma \ref{mthpartlaplacepowers} and equation \eqref{laplacepowermthhompol}, we then see that 
\begin{equation*}
\partial^{\gamma_{4,s}} \bar \partial^{\gamma_{4,s}}q_{m}(z,\bar z)=\xi_m(z)T_{m,\gamma_{4,s}}^{''''}\varphi_m(|z|^2),
\end{equation*}
where $T_{m,\alpha}^{''''} \in A_1(K)$ is the ordinary differential operator given by
\begin{equation*}
T_{m,\alpha}^{''''}=\sum_{l=0}^{\alpha}\binom{\alpha}{l}\frac{(|m|+\alpha)!}{(|m|+\alpha-l)!}x^{\alpha-l}\frac{d^{2\alpha-l}}{dx^{2\alpha-l}}, \quad \alpha \in \N, \quad m \in \Z. 
\end{equation*}
Define
\begin{equation*}
\varphi_3(|z|^2)=T_{m,\gamma_{4,s}}^{''''}\varphi_m(|z|^2).
\end{equation*}
Via conjugation and Lemma \ref{mthpartpartials}, we then get that 
\begin{equation*}
\bar z^{\gamma_{3,s}} \bar \partial^{\gamma_{3,s}}\partial^{\gamma_{4,s}} \bar \partial^{\gamma_{4,s}}q_{m}(z,\bar z)=\xi_m(z)T^{'''}_{m,\gamma_{3,s}}\varphi_3(|z|^2),
\end{equation*}
where $T^{'''}_{m,\alpha} \in A_1(K)$ is the ordinary differential operator 
\begin{equation*}
T^{'''}_{m,\alpha}=x^{\alpha}\frac{d^{\alpha}}{dx^{\alpha}}, \quad \alpha \in \N.
\end{equation*}
Continuing along these lines, we let 
\begin{equation*}
\varphi_2(|z|^2)=T_{m,\gamma_{3,s}}^{'''}T_{m,\gamma_{4,s}}^{''''}\varphi_m(|z|^2).
\end{equation*}
By Lemma \ref{mthpartpartials} and equation \eqref{mthpartder},
\begin{equation*}
z^{\gamma_{2,s}}\partial^{\gamma_{2,s}} \bar z^{\gamma_{3,s}} \bar \partial^{\gamma_{3,s}}\partial^{\gamma_{4,s}}\bar \partial^{\gamma_{4,s}}q_m(z,\bar z)=\xi_m(z)T_{m,\gamma_{2,s}}^{''}\varphi_2(|z|^2),
\end{equation*}
where $T_{m,\alpha}^{''} \in A_1(K)$ is the ordinary differential operator
\begin{equation*}
T_{m,\alpha}^{''}=\sum_{l=0}^{\alpha}\binom{\alpha}{l}(|m|-l+1)_{l}x^{\alpha-l}\frac{d^{\alpha-l}}{dx^{\alpha-l}}, \quad \alpha \in \N, \quad m \in \Z.
\end{equation*}
Finally, we let 
\begin{equation*}
\varphi_1(|z|^2)=T_{m,\gamma_{2,s}}^{''}T_{m,\gamma_{3,s}}^{'''}T_{m,\gamma_{4,s}}^{''''}\varphi_m(|z|^2).
\end{equation*}
Then 
\begin{equation*}
z^{\gamma_{1,s}} \bar z^{\gamma_{1,s}}z^{\gamma_{2,s}} \partial^{\gamma_{2,s}} \bar z^{\gamma_{3,s}} \bar \partial^{\gamma_{3,s}}\partial^{\gamma_{4,s}}\bar \partial^{\gamma_{4,s}}q_m(z,\bar z)=\xi_m(z)T_{m,\gamma_{1,s}}^{'}\varphi_1(|z|^2),
\end{equation*}
where $T_{m,\alpha}^{'}$ is the operator in $A_1$ that acts on elements in $K[x]$ via multiplication by $x^{\alpha}$. 

Take the product of such operators to be
\begin{equation}
\label{TioperatorstheoremR2invariance}
T_i=T_{m,\gamma_{1,i}}^{'}T_{m,\gamma_{2,i}}^{''}T_{m,\gamma_{3,i}}^{'''}T_{m,\gamma_{4,i}}^{''''},
\end{equation} 
for $1 \leq i \leq n$, and write $T_{m,D}=\sum_{i=1}^n c_iT_i$ for their sum. Then
\begin{equation*}
Dq_m(z,\bar z)=\xi_m(z)\sum_{i=1}^{n}c_iT_i\varphi_{m}(|z|^2)=\xi_m(z)T_{m,D}\varphi_{m}(|z|^2),
\end{equation*}
for $m \geq 0$. This completes the case for $m \in \N$. 

Let $m \in \Z \setminus \N$ and let $T_{m,\alpha}^{'},\ldots,T_{m,\alpha}^{''''} \in A_1(K)$ be as above. Set 
\begin{equation*}
\tilde{T}_i=T_{m,\gamma_{1,i}}^{'}T_{m,\gamma_{3,i}}^{''}T_{m,\gamma_{2,i}}^{'''}T_{m,\gamma_{4,i}}^{''''}, \quad 1 \leq i \leq n,
\end{equation*}
and note that $\tilde{T}_i$ is the operator obtained from $T_i$ in \eqref{TioperatorstheoremR2invariance} by interchanging the two parameters $\gamma_{2,i}$ and $\gamma_{3,i}$ for $1 \leq i \leq n$. Take the sum of such products of operators to be $\tilde{T}_{m,D}=\sum_{i=1}^{n} c_i \tilde{T}_i$. The preceding analysis then shows that 
\begin{equation*}
\tilde{D}\tilde{q}_m(z,\bar z)=\bar{\xi}_{m}(z)\tilde{T}_{m,D}\varphi_m(|z|^2)=\xi_{-m}(z)\tilde{T}_{m,D}\varphi_m(|z|^2),
\end{equation*}
for $m \in \Z \setminus \N$, where $\tilde{q}_m(z,\bar z)=\bar{q}_m(z,\bar z)$ and $\tilde{D}=\bar {D}$, and with conjugation taken in the sense of section two. By conjugating each side above, we obtain $Dq_m(z,\bar z)=\xi_{m}(z)\tilde{T}_{m,D}\varphi_m(|z|^2)$ for $m \in \Z \setminus \N$. 
\end{proof}
\begin{cor}
\label{hompartactioncor}
Let $D \in A_2(K)$. Then the operator $D$ satisfies
\begin{equation*}
Dp(|z|^2)\xi_{m}(z)=\xi_{m}(z)T_{m,D}p(|z|^2), \quad m \in \Z, \quad p \in K^{\star},
\end{equation*}
relative to some subfamily of operators $\{T_{m,D}\}_{m=-\infty}^{\infty} \subset A_1(K)$ if and only if $D$ commutes with rotations.  
\end{cor}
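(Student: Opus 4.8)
The plan is to read this off as a packaging of two facts already established. The equivalence has two directions. For the direction ``$D$ commutes with rotations $\Rightarrow$ $D$ satisfies \eqref{homconditionR2} for some family $\{T_{m,D}\}\subset A_1(K)$'', there is nothing new to do: this is exactly Theorem \ref{mappingA2toA1}, whose proof even exhibits the operators $T_{m,D}$ explicitly in terms of the operators $T^{'}_{m,\alpha},\dots,T^{''''}_{m,\alpha}$.

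For the converse direction ``$D$ satisfies \eqref{homconditionR2} $\Rightarrow$ $D$ commutes with rotations'', I would argue as in the remark preceding Theorem \ref{mappingA2toA1}. Suppose $Dp(|z|^2)\xi_m(z)=\xi_m(z)T_{m,D}p(|z|^2)$ for all $m\in\Z$ and $p\in K^{\star}$. Both $D$ and $R_\theta$ are $K$-linear endomorphisms of $K[z,\bar z]$, and by Lemma \ref{span} the elements $p(|z|^2)\xi_m(z)$ ($m\in\Z$, $p\in K^\star$) span $K[z,\bar z]$ over $K$; hence it suffices to check $DR_\theta = R_\theta D$ on such elements. On those, $R_\theta$ fixes $p(|z|^2)$ (since $|e^{i\theta}z|^2=|z|^2$) and sends $\xi_m(z)\mapsto e^{im\theta}\xi_m(z)$, while the hypothesis says $D$ acts on $p(|z|^2)\xi_m(z)$ through $T_{m,D}$ on the $K^\star$-factor only. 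Thus the chain of equalities in \eqref{onlyifactionhomparts} applies verbatim and shows that both $R_\theta Dp(|z|^2)\xi_m(z)$ and $DR_\theta p(|z|^2)\xi_m(z)$ equal $e^{im\theta}\xi_m(z)T_{m,D}p(|z|^2)$. Therefore $D\in\mathfrak{R}_2$.

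There is no genuine obstacle here; the corollary is a bookkeeping consequence of Theorem \ref{mappingA2toA1} together with the elementary computation \eqref{onlyifactionhomparts}. The only point I would state with a little care is the reduction in the converse direction to the spanning family $\{\,p(|z|^2)\xi_m(z)\,\}$, which is legitimate precisely because $\{\xi_m(z)\}_{m\in\Z}$ generates $K[z,\bar z]$ over $K^{\star}$ by Lemmas \ref{lindependence.k} and \ref{span}, and because $R_\theta$ commutes with multiplication by elements of $K^{\star}$.
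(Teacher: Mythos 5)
Your proposal is correct and matches the paper's own proof, which likewise cites the computation \eqref{onlyifactionhomparts} for the direction ``\eqref{homconditionR2} $\Rightarrow$ commutes with rotations'' and Theorem \ref{mappingA2toA1} for the converse. The extra care you take in justifying the reduction to the spanning family $p(|z|^2)\xi_m(z)$ is exactly the point the paper makes in the remark preceding Theorem \ref{mappingA2toA1}, so nothing is missing.
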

\begin{proof}
We saw in \eqref{onlyifactionhomparts} that the forward direction holds, and the preceding Theorem \ref{mappingA2toA1} establishes the converse of this statement. 
\end{proof}
We can also describe the relationship in \eqref{homconditionR2} between elements of $\mathfrak{R}_2 \subset A_2(K)$ and that of the first Weyl-algebra $A_1(K)$ in terms of mappings between the two. If we fix $m \in \Z$ and define $\Lambda_m:\mathfrak{R}_2 \rightarrow A_1$ to be the map that takes the operator $D \in \mathfrak{R}_2$ to the element $T_{m,D} \in A_1$ if and only if
\begin{equation}
\label{mapR2A1}
Dp(|z|^2)\xi_{m}(z)=\xi_{m}(z)T_{m,D}p(|z|^2), \quad p \in K^{\star},
\end{equation} 
then $\Lambda_m$ gives a well defined map from $\mathfrak{R}_2$ into $A_1$. One way to see this is to recall that the algebra $A_1$ is the span of the canonical basis elements $x^id^j/dx^j$ for $i,j \geq 0$, from which it follows that the only element that acts trivially on all of $K[x]$ is the zero operator. The more precise description of what these mappings are or how they evaluate with respect to certain subsets of $\mathfrak{R}_2 \subset A_2(K)$ has merit on its own. A more detailed such analysis was for example key in deriving the series representations for the generalized harmonic functions that were considered in \cite{K}. These ideas are contained in the following geometric interpretation of Theorem \ref{mappingA2toA1}, for which we recall that $p:=p(z,\bar z) \in K[z,\bar z]$ is in the common solution set of $D_1,\ldots,D_n \in A_2(K)$ if and only if 
\begin{equation*}
D_1(p)=\ldots=D_n(p)=0. 
\end{equation*}

\begin{cor}
Let $m \in \Z$ and let $S$ be the set of all operators $D \in \mathfrak{R}_2$ that satisfy 
\begin{equation*}
Dp(|z|^2)\xi_{m}(z)=\xi_{m}(z)T_{m}p(|z|^2), \quad p \in K^{\star},
\end{equation*}
for some given $T_{m} \in A_1(K)$. Let $Q$ be the zero set of $T_{m}$ in $K[|z|^2]$. Then $Q\xi_m(z)$ is contained in the common solution set of all elements in $S$. 
\end{cor}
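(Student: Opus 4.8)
The plan is to unwind the definitions and apply Theorem~\ref{mappingA2toA1} (or rather the already-granted relation \eqref{homconditionR2}) directly. Fix $m \in \Z$ and let $q(|z|^2) \in Q$, so that $T_m q(|z|^2) = 0$ in $K[|z|^2]$. The claim is that the element $q\xi_m(z) \in K[z,\bar z]$ is annihilated by every $D \in S$. The first step is simply to observe that $Q\xi_m(z)$ denotes the set $\{q(|z|^2)\xi_m(z) : q \in Q\}$, so it suffices to check a single such product.

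Next I would take an arbitrary $D \in S$. By definition of $S$, there is the identity $Dp(|z|^2)\xi_m(z) = \xi_m(z)T_m p(|z|^2)$ for all $p \in K^{\star}$, with the \emph{same} operator $T_m \in A_1(K)$ that defines $Q$ as its zero set. Applying this with $p = q$ gives
\begin{equation*}
D\big(q(|z|^2)\xi_m(z)\big) = \xi_m(z)\,T_m q(|z|^2) = \xi_m(z)\cdot 0 = 0,
\end{equation*}
since $q \in Q$ means precisely that $T_m q = 0$. Hence $q\xi_m(z)$ lies in the kernel of $D$. As $D \in S$ was arbitrary, $q\xi_m(z)$ belongs to the common solution set $\{p : D(p) = 0 \text{ for all } D \in S\}$, and as $q \in Q$ was arbitrary, the whole set $Q\xi_m(z)$ is contained in that common solution set.

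There is essentially no obstacle here: the statement is a near-immediate corollary of the defining property of $S$ combined with the fact that the operators $T_m$ attached to the various $D \in S$ all coincide (that is what it means for $S$ to be "the set of all operators $D \in \mathfrak{R}_2$ that satisfy \ldots\ for some given $T_m$"). The only point worth a sentence of care is that $\xi_m(z)$ is a nonzero element of $K[z,\bar z]$, so $\xi_m(z)\cdot 0 = 0$ genuinely forces the product to vanish — but this is trivial. One could add, as a remark, that when $T_m$ happens to have a rich zero set (e.g. a hypergeometric-type operator with polynomial solutions), this furnishes a large supply of common solutions supported on the single homogeneous layer $K^{\star}\xi_m(z)$, which is the structural point the corollary is making.
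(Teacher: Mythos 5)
Your proof is correct and follows the same route as the paper, which simply cites Theorem \ref{mappingA2toA1} and the defining relation of $S$; you have merely spelled out the one-line computation $D(q\xi_m)=\xi_m T_m q=0$ explicitly. No gaps.
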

\begin{proof}
By the previous Theorem \ref{mappingA2toA1}, each element $D \in \mathfrak{R}_2$ is given relative to some uniquely determined sequence $\{T_{m,D}\}_{m\in \Z}$ such that \eqref{homconditionR2} holds. 
\end{proof} 

Theorem \ref{mappingA2toA1} also shows that any submodule of $\mathcal{H}$ that is the span of some number of generators $\xi_m(z)$ for $\mathcal{H}$ is invariant under the action of any element $D \in \mathfrak{R}_2$. 

\begin{cor}
\label{invariantsubspacescor}
Let $S$ be a finite subset of $\Z$. Let $N_S$ denote the $K^{\star}$-submodule of $\mathcal{H}$ that is spanned by the elements $\xi_{m}(z)$ for $m \in S$. Then $N_S$ is invariant under $\mathfrak{R}_2$.
\end{cor}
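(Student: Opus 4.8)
The plan is to reduce the statement to the single-generator case already settled by Theorem \ref{mappingA2toA1}. Recall that $\mathfrak{R}_2$ acts on $\mathcal{H}$ by endomorphisms of $K[z,\bar z]$, so in particular every $D \in \mathfrak{R}_2$ is additive, and that by Proposition \ref{linearindependencehomparts} a general element of $N_S$ has the form $q=\sum_{m \in S} p_m\xi_m(z)$ for finitely many $p_m \in K^{\star}$ (indeed $N_S=\bigoplus_{m\in S}K^{\star}\xi_m(z)$). Fix an arbitrary $D \in \mathfrak{R}_2$; we must show $D(q) \in N_S$.

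First I would invoke Theorem \ref{mappingA2toA1} to produce the family $\{T_{m,D}\}_{m \in \Z} \subset A_1(K)$ with
\begin{equation*}
D\,p(|z|^2)\xi_m(z)=\xi_m(z)\,T_{m,D}p(|z|^2), \quad m \in \Z,\ p \in K^{\star}.
\end{equation*}
In particular, for each $m \in S$ the element $D(p_m\xi_m(z))=(T_{m,D}p_m)(|z|^2)\,\xi_m(z)$ lies in the submodule $K^{\star}\xi_m(z) \subset N_S$, since $T_{m,D}p_m \in K[x]$ identifies with an element of $K^{\star}=K[|z|^2]$.

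Then, using additivity of $D$, I would conclude
\begin{equation*}
D(q)=\sum_{m \in S} D\bigl(p_m\xi_m(z)\bigr)=\sum_{m \in S}(T_{m,D}p_m)(|z|^2)\,\xi_m(z) \in N_S,
\end{equation*}
which is exactly the required invariance, as $D$ and $q$ were arbitrary. (One may alternatively first reduce to the generators $z\partial,\bar z\bar\partial,\partial\bar\partial$ of $\mathfrak{H}_2$ together with $z\bar z$ using that $\mathfrak{R}_2$ is a $K$-algebra and $N_S$ a submodule, but since Theorem \ref{mappingA2toA1} already applies verbatim to every element of $\mathfrak{R}_2$, this extra step is unnecessary.)

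There is essentially no obstacle here: all the substance lies in Theorem \ref{mappingA2toA1}, and the only thing this corollary adds is the bookkeeping that a finite $K^{\star}$-linear combination of invariant lines $K^{\star}\xi_m(z)$ is again invariant, which is immediate from linearity. The mild point worth stating explicitly is that the decomposition $q=\sum_{m\in S}p_m\xi_m(z)$ is available and respected term-by-term, which is guaranteed by Proposition \ref{linearindependencehomparts}.
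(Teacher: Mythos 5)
Your proposal is correct and follows exactly the route the paper takes: the paper's proof is simply "another application of Theorem \ref{mappingA2toA1}", and your write-up just fills in the routine bookkeeping (the decomposition from Proposition \ref{linearindependencehomparts} plus additivity of $D$) that the paper leaves implicit.
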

\begin{proof}
The statement follows from another application of Theorem \ref{mappingA2toA1}.
\end{proof}
We may add that this last result allows one to add another structural layer on top of the submodules $N_S \subset \mathcal{H}$, and treat them as modules over $\mathfrak{R}_2$ or $\mathfrak{H}_2$.   

Let $D \in \mathfrak{R}_2$ be given relative to some sequence $\{T_{m,D}\}_{m\in \Z} \subset A_1(K)$ such that \eqref{homconditionR2} holds, and let $Z(T_{m,D})$ be the zero set of the ordinary differential operator $T_{m,D} \in A_1(K)$, that is
\begin{equation*}
Z(T_{m,D})=\{p(x) \in K[x]: T_{m,D}p(x)=0\}, \quad m \in \Z.
\end{equation*}
The following result then shows that $Dp(z,\bar z)=0$ if and only if $p(z,\bar z) \in K[z,\bar z]$ is contained in the $K$-vector direct sum of the subspaces $Z(T_{m,D})\xi_m(z)$. This reduces the two-dimensional problem $Dp(z,\bar z)=0$ to a corresponding ordinary one-dimensional problem, and allows for an analysis of each part separately.  

\begin{cor}
Let $S$ and $N_S$ be as in Corollary \ref{invariantsubspacescor}. Let $D \in \mathfrak{R}_2$ be given relative to some sequence $\{T_{m,D}\}_{m\in \Z} \subset A_1(K)$ such that \eqref{homconditionR2} holds, and consider the polynomial 
\begin{equation*}
p(z,\bar z)=\sum_{m \in S} p_{m}\xi_{m}(z) \in N_S,
\end{equation*}
for some $p_m \in K^{\star}$. Then $Dp(z,\bar z)=0$ if and only if $T_{m,D} p_m =0$ for every $m \in S$.  
\end{cor}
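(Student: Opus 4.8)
The plan is to apply $D$ termwise to $p(z,\bar z)$ and then invoke the freeness of the generators $\xi_m(z)$ of $\mathcal{H}$ over $K^{\star}$. First I would use the $K$-linearity of $D$ together with the defining relation \eqref{homconditionR2} to write
\begin{equation*}
Dp(z,\bar z) = \sum_{m \in S} D\big(p_m \xi_m(z)\big) = \sum_{m \in S} \xi_m(z)\,T_{m,D}p_m(|z|^2).
\end{equation*}
Each summand $\big(T_{m,D}p_m\big)(|z|^2)\,\xi_m(z)$ belongs to the $K^{\star}$-submodule $K^{\star}\xi_m(z)$ of $\mathcal{H}$, and the polynomial $q_m := T_{m,D}p_m \in K[x]$ is identified with the element $q_m(|z|^2) \in K^{\star}=K[|z|^2]$ via the ring isomorphism $K[x] \xrightarrow{\sim} K[|z|^2]$ sending $x \mapsto |z|^2$ that was noted at the start of Section 3.

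Next I would invoke Proposition \ref{linearindependencehomparts} — or, more directly, its Corollary — which asserts that a $K^{\star}$-linear combination $\sum_{m \in S} q_m \xi_m(z)$ vanishes in $K[z,\bar z]$ only if every coefficient $q_m \in K^{\star}$ is zero. Applying this to the combination just displayed shows that $Dp(z,\bar z)=0$ forces $\big(T_{m,D}p_m\big)(|z|^2)=0$ in $K^{\star}$ for each $m \in S$, hence $T_{m,D}p_m = 0$ in $K[x]$. Conversely, if $T_{m,D}p_m = 0$ for every $m \in S$, then the displayed expression for $Dp(z,\bar z)$ is a sum of zeros, so $Dp(z,\bar z)=0$.

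I do not expect a genuine obstacle here: the substance has already been carried out in Theorem \ref{mappingA2toA1}, which furnishes the operators $T_{m,D}$ satisfying \eqref{homconditionR2}, and in Proposition \ref{linearindependencehomparts}, which supplies the freeness of the $\xi_m(z)$ over $K^{\star}$; the present statement is essentially the conjunction of these two facts together with the $K$-linearity of $D$. The only point that merits a word of care is the passage between "$\big(T_{m,D}p_m\big)(|z|^2)=0$ as an element of $K^{\star}$" and "$T_{m,D}p_m = 0$ as an element of $K[x]$", which is immediate from the identification $K[x] \cong K[|z|^2]$ and ensures the conclusion is stated at the level of the first Weyl algebra $A_1(K)$, as intended.
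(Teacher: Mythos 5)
Your argument is correct and matches the paper's proof, which simply cites Proposition \ref{linearindependencehomparts} and Theorem \ref{mappingA2toA1}: you expand $Dp$ termwise via \eqref{homconditionR2} and then use the freeness of the $\xi_m(z)$ over $K^{\star}$ to read off the equivalence. Your extra remark on identifying $K[x]$ with $K[|z|^2]$ is a harmless elaboration of the same route.
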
    
\begin{proof}
The statement follows from Proposition \ref{linearindependencehomparts} and Theorem \ref{mappingA2toA1}.
\end{proof}

\section{Generalized harmonic polynomials}

The preceding discussion motivates the following definition. 

\begin{dfn}
A generalized harmonic polynomial is a polynomial $p(z,\bar z)$ in $K[z,\bar z]$ that satisfies $Dp(z,\bar z)=0$ for some $D \in \mathfrak{H}_2$.
\end{dfn}

We will sometimes also refer to the polynomial $p(z,\bar z)$ in the space $K[z,\bar z]$ that is annihilated by or is associated with a given operator $D$ in $\mathfrak{H}_2$, and refer to this relation as a pair $(p,D) \in \mathcal{H} \times \mathfrak{H}_2$, where we recall the use of the symbol $\mathcal{H}$ as a replacement for the $K^{\star}$-module $K[z,\bar z]$. In abuse of the language, we will sometimes also refer to the polynomial $(p,D)$ when speaking in broader terms, or when we want to emphasize that $p(z,\bar z)$ is a polynomial in $K[z,\bar z]$ that satisfies $Dp(z,\bar z)=0$.

In the restricted case of polynomials it is clear to see that the collection of all such generalized harmonic polynomials is $K[z,\bar z]$ itself, since every polynomial $p(z,\bar z)$ satisfies $\partial^n\bar \partial^n p=0$ for some $n \in \N$. The question becomes more delicate indeed, when for example $K[z,\bar z]$ is taken to be a mere dense subset of some larger space, for example $C^{\infty}(V)$ for some compact subset $V$ of $\D$,  or when considering the common solution set for some $D_1,\ldots,D_n \in \mathfrak{H}_2$. 

The coming part of this section concerns the various ways in which a polynomial $(p,P)$ can be represented in terms of other polynomials $(q_i,Q_i)$ for a given collection of operators $P, Q_i \in \mathfrak{H}_2$. Special attention will be drawn to the case where $P=\partial \bar \partial$ and $p$ is a polynomial satisfying $P^n p=0$ for some positive $n \in \N$, and where the $(q_i,Q_i)$ are the so called $(\gamma_1,\gamma_2)$-harmonic polynomials, to be defined over the next few paragraphs. The reverse relationship between such generalized harmonic polynomials or functions will also be of interest.  

Note that $K[z,\bar{z}]$ is the smallest algebra that contains $\mathcal{H}^1$, the set of polynomials in $K[z,\bar{z}]$ of the form $\sum_{m} c_mz^m+c_{-m}\bar{z}^m$. We will say that a polynomial $p(z,\bar{z})$ in $K[z,\bar{z}]$ is harmonic when $\partial\bar{\partial}p(z,\bar{z})=0$. In this terminology, every polynomial in $\mathcal{H}^1$ is harmonic. More generally, we will refer to any polynomial in $K[z,\bar{z}]$ that satisfies the equation $\partial^n\bar{\partial}^np(z,\bar{z})=0$ as a polyharmonic polynomial of order $n \in \N$ and denote the collection of all such polynomials by $\mathcal{H}^n$. We will sometimes also refer to such a polynomial as an $n$-harmonic polynomial. The latter use of terminology is perhaps to be preferred over the former, and avoids the possible misinterpretation that the prefix is somehow related to the degree of the concerned polynomial. We will stick with this terminology however, and rely on context for its interpretation, noting that there are polyharmonic polynomials of order $n$, or $n$-harmonic polynomials, to any degree for $n \geq 1$. In the case that $n=0$, we will simply define $\mathcal{H}^0$ to be the zero set. The set $\mathcal{H}^n$ forms a $K$-vector subspace in $K[z,\bar{z}]$ for all $n \in \N$. Clearly, any polynomial $p(z,\bar{z}) \in K[z,\bar{z}]$ is $N$-harmonic for some least integer $N \in \N$, in which case it is also $n$-harmonic for all $n \geq N$. Another way to state this is in terms of the ascending sequence $\{0\} = \mathcal{H}^0 \subset \mathcal{H}^1 \subset \mathcal{H}^2 \subset\ldots$ whose union is $\mathcal{H}=K[z,\bar{z}]$. Alternatively, the $K$-vector subspaces $\mathcal{H}^n$ form a filtration of $K[z,\bar z]$ as a vector space over $K$. In fact,
\begin{equation*}
\mathcal{H}^n=h_1 \oplus h_2 \oplus \ldots \oplus h_{n}, \quad n \geq 1,
\end{equation*} 
where $h_{n+1}$ is the span of $|z|^{2n} \cdot \mathcal{H}^1$ for $n \geq 0$, as clarified below. The subspaces $h_{n}$ are thus direct summands of $K[z,\bar z]$ and appear as the ``$n$:th layer`` in the infinite Matryoshka that results by sorting the summands $h_n$ in the form of an inverted triangle, with the generators of $h_{1}=\mathcal{H}^1$ making up the ``outer shell``, followed by the generators of $h_{2}$, and so on. More commonly, such decompositions for polyharmonic polynomials or functions are referred to as those of Almansi, as further explained below. 

In relation to the previous sections, we can further note that the $K$-linear map $\partial \bar \partial$ brings $h_{n}$ injectively onto $h_{n-1}$ while the operators $z\partial$ and $\bar z \bar \partial$ bring $h_n$ onto itself for $n \geq 2$. That such operators fail to be injective in the case of $n=1$ rests in the observation that they annihilate constants and that the polynomials in $\mathcal{H}^1$ are harmonic. The analysis of how the solution spaces to members in $\mathfrak{R}_2$ or $\mathfrak{H}_2$ are formed must then come down to a study of how the elements in $K[z,\bar z]$ are mapped between the various components under such operators, of which we will see plenty examples. 

In summary then, we have $h_1=\mathcal{H}^1$ and the split exact sequence
\[
  \setlength{\arraycolsep}{1pt}
  \begin{array}{*{9}c}
    0 & \Lrightarrow & \mathcal{H}^{n-1} & \overset{id}  \Lrightarrow & \mathcal{H}^n & \overset{\partial^{n-1} \bar \partial^{n-1}}\Lrightarrow & \mathcal{H}^{1} & \Lrightarrow & 0
  \end{array}, 
\]
for $n \geq 2$. We can also draw the diagram
\begin{center}
\begin{tikzpicture}
    \node (E) at (0,0) {$\mathcal{H}^n$};
    \node[right=of E] (F) {$\mathcal{H}^1$};
    \node[below=of F] (A) {$h_n,$};
    \node[below=of E] (Asubt) {$\frac{\mathcal{H}^n}{\mathcal{H}^{n-1}}$};
    \draw[->] (E)--(F) node [midway,above] {};
    \draw[->] (F)--(A) node [midway,right] {} 
                node [midway,left] {};
    \draw[->] (Asubt)--(A) node [midway,below] {} 
                node [midway,above] {};
    \draw[->] (E)--(Asubt) node [midway,left] {};
    \draw[->] (Asubt)--(F);
\end{tikzpicture}
\end{center}
and note from the correspondence
\begin{equation*}
\mathcal{H}^n=h_1 \oplus \ldots \oplus h_n \cong \mathcal{H}^1/\mathcal{H}^{0} \times \ldots \times \mathcal{H}^{n}/\mathcal{H}^{n-1} \cong \underbrace{\mathcal{H}^1 \times \ldots \times \mathcal{H}^1}_{n \ \text{times}},
\end{equation*}
that $\mathcal{H}^n$ is indeed the $n$-fold product of $\mathcal{H}^1$ with itself. The filtering components $\mathcal{H}^n$ can also be linked by the chain 
\[
  \setlength{\arraycolsep}{1pt}
  \begin{array}{*{11}c}
   \ldots & \overset{\partial^{n}\bar \partial^{n}} \Lrightarrow & \mathcal{H}^{n} & \overset{\partial^{n-1}\bar \partial^{n-1}}  \Lrightarrow & \mathcal{H}^{n-1} & \overset{\partial^{n-2} \bar \partial^{n-2}} \Lrightarrow & \ldots & \overset{\partial \bar \partial} \Lrightarrow & \mathcal{H}^1 & \Lrightarrow & 0,
  \end{array} 
\]
with corresponding homology groups  
\begin{equation*}
H_n=\mathcal{H}^{n}/\mathcal{H}^1 \cong h_2 \oplus \ldots \oplus h_n \cong \mathcal{H}^{n-1}, \quad n \geq 1,
\end{equation*} 
measuring the extent to which the polyharmonic components fail to be harmonic. 

We shall proceed with a more direct approach and show how these observations manifest in computational terms.  
 
\begin{lemma}
\label{polyharmonicorder}
A polynomial $p(z,\bar{z}) \in \mathcal{H}^n$ is polyharmonic of order $n \in \N$ if and only if $p(z,\bar{z})$ is a linear combination of terms of the form $z^k \bar{z}^l$, where $k,l \in \N$ are such that $k < n$ or $l < n$.
\end{lemma}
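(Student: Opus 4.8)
The plan is to reduce everything to the action of $\partial^n\bar\partial^n$ on a single monomial and then invoke the linear independence of the monomials $z^k\bar z^l$ in $K[z,\bar z]$. The starting point is the elementary identity
\begin{equation*}
\partial^n\bar\partial^n(z^k\bar z^l)=\frac{k!}{(k-n)!}\cdot\frac{l!}{(l-n)!}\,z^{k-n}\bar z^{l-n},\qquad k\ge n,\ l\ge n,
\end{equation*}
while $\partial^n\bar\partial^n(z^k\bar z^l)=0$ whenever $k<n$ or $l<n$, since then one of the factors $\partial^n z^k$, $\bar\partial^n\bar z^l$ already vanishes. This is just repeated application of $\partial z^k=kz^{k-1}$ and $\bar\partial\bar z^l=l\bar z^{l-1}$.

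For the \emph{if} direction I would take a polynomial $p$ that is a $K$-linear combination of monomials $z^k\bar z^l$ with $k<n$ or $l<n$, apply $\partial^n\bar\partial^n$ term by term, and read off from the vanishing above that every term is annihilated; hence $\partial^n\bar\partial^n p=0$, i.e. $p\in\mathcal{H}^n$. In the degenerate case $n=0$ there are no such monomials, so the statement reads $p=0$, which matches the convention $\mathcal{H}^0=\{0\}$.

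For the \emph{only if} direction I would write an arbitrary $p\in K[z,\bar z]$ in its monomial expansion $p=\sum_{k,l}c_{k,l}z^k\bar z^l$. Applying the identity above gives
\begin{equation*}
\partial^n\bar\partial^n p=\sum_{\substack{k\ge n\\ l\ge n}}c_{k,l}\,\frac{k!}{(k-n)!}\cdot\frac{l!}{(l-n)!}\,z^{k-n}\bar z^{l-n}.
\end{equation*}
The index shift $(k,l)\mapsto(k-n,l-n)$ is injective on $\{(k,l):k\ge n,\ l\ge n\}$, so the monomials on the right are pairwise distinct; since the monomials $z^a\bar z^b$ are linearly independent over $K$, the hypothesis $\partial^n\bar\partial^n p=0$ forces $c_{k,l}=0$ for all $k\ge n$ and $l\ge n$. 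Thus only monomials with $k<n$ or $l<n$ survive in the expansion of $p$, which is exactly the asserted form.

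I do not anticipate a real obstacle: the only points requiring a little care are the injectivity of the shift (so that no cancellation among the surviving terms can spoil the argument) and the trivial case $n=0$, both of which are immediate. The statement can equivalently be phrased as saying that $\{z^k\bar z^l : k<n\text{ or }l<n\}$ is a $K$-basis of $\mathcal{H}^n$, which is the form in which it will be convenient to use it when identifying the summands $h_n$.
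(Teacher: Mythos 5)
Your proposal is correct and follows essentially the same route as the paper: compute $\partial^n\bar\partial^n$ on a single monomial $z^k\bar z^l$, observe that it vanishes precisely when $k<n$ or $l<n$, and use a linear independence argument to kill the coefficients with $k,l\ge n$. The only cosmetic difference is that you invoke linear independence of the monomials $z^a\bar z^b$ directly, whereas the paper groups terms as $p_{k,l}\xi_{k-l}(z)$ and appeals to the free generation of $\mathcal{H}$ over $K^{\star}$ by $\{\xi_m\}$; both arguments are sound.
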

\begin{proof}
Let $q_{k,l}(z,\bar{z})=z^k \bar z^l$ be such that $k \geq l \geq n$. Then the operator $\partial^n\bar{\partial}^n$ applied to $q_{k,l}(z,\bar{z})$ gives a polynomial in $K[z,\bar{z}]$ of the form $|z|^{2N}z^{M}$, where $M=k-l$ and $N=l-n$. By taking conjugates and interchanging $k$ for $l$ we get a similar expression in the case that $l > k \geq n$. 

Suppose that $p(z,\bar z) \in K[z,\bar z]$ is a polyharmonic polynomial of order $n \in \N$ and write
\begin{equation*}
p(z,\bar z)=\sum_{k}\sum_{l} c_{k,l}q_{k,l}(z,\bar z),
\end{equation*}
where $c_{k,l}=0$ for all but finitely many indices $k,l\geq 0$. Then 
\begin{equation*}
0=\partial^n \bar \partial^n p(z,\bar z)=\sum_{k\geq l \geq n} p_{k,l}\xi_{k-l}(z)+\sum_{l > k \geq n} p_{k,l}\xi_{k-l}(z),
\end{equation*} 
where $p_{k,l} \in K^{\star}$ is such that 
\begin{equation*}
p_{k,l}(|z|^2) = \left\{ 
\begin{array}{ccc}
c_{k,l}'|z|^{2(l-n)}, &\quad k \geq l \geq n, \\
c_{k,l}'|z|^{2(k-n)}, &\quad l > k \geq n,  
\end{array}\right.
\end{equation*}
for some $c_{k,l}' \in K$. Since $\{\xi_m(z)\}_{m=-\infty}^{\infty}$ is a free generating set for $\mathcal{H}$ over $K^{\star}$, we get that $p_{k,l}=0$ whenever $k,l \geq n$. The converse is clear. 
\end{proof}

Another way of phrasing the previous is to say that  
\begin{equation}
\label{spanharmoniccomp}
\mathcal{H}^n=\textnormal{span}_K\{z^i \bar z^j: i < n \ \textnormal{or} \ j < n \}, \quad n \geq 0,
\end{equation}
with the interpretation that $\mathcal{H}^0=\{0\}$. It can also be noted that, while the filtration $\{\mathcal{F}_n\}_{n=0}^{\infty}$ for $K[z,\bar z]$ that is induced from the grading in \eqref{modulegrading} is well behaved under multiplication, the same cannot be said of the polyharmonic components $\mathcal{H}^n$. By this we mean that the components $\mathcal{F}_n=\bigoplus_{k=0}^n \mathcal{G}_k$ satisfy $\mathcal{F}_m\mathcal{F}_n \subset \mathcal{F}_{m+n}$, while $\mathcal{H}^m \mathcal{H}^n$ is not contained in $\mathcal{H}^{m+n}$. In fact, the multiplication of polyharmonic components is not restrictive at all, and any two polyharmonic components $\mathcal{H}^m,\mathcal{H}^n$ reproduces $K[z,\bar z]$ for $m,n \geq 1$. That is, provided with any of the basis elements $z^i \bar z^j$ of $K[z,\bar z]$ for some $i,j \geq 0$, there is an element in $\mathcal{H}^m$ and one in $\mathcal{H}^n$ such that their product is $z^i \bar z^j$. Indeed, we can take $z^i \in \mathcal{H}^m$ and $\bar z^j \in \mathcal{H}^n$ for $m,n \geq 1$. Effectively, this means that for any given $N \geq 1$, it is possible to pick polyharmonic polynomials $p_m(z,\bar z) \in \mathcal{H}^m$ and $p_n(z,\bar z) \in \mathcal{H}^n$ such that their product satisfies $\partial^N\bar \partial^N (p_mp_n)(z,\bar z) \neq 0$.

We shall continue with the following so called Almansi representation for poly\-harmonic polynomials. 
\begin{prop}
\label{almansi}
Let $p(z,\bar{z}) \in \mathcal{H}^n$ be a polyharmonic polynomial of order $n$. Then $p(z,\bar{z})$ is in the linear span of $\mathcal{H}^{1}$ over $K^{\star}$. In particular, we can write 
\begin{equation*}
p(z,\bar{z})=q_0(z,\bar{z})+(1-|z|^2)q_1(z,\bar{z})+\ldots+(1-|z|^2)^{n-1}q_{n-1}(z,\bar{z}),
\end{equation*}
for some polynomials $q_0(z,\bar z),\ldots,q_{n-1}(z,\bar{z}) \in \mathcal{H}^1$.
\end{prop}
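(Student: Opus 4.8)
The plan is to combine the free $K^{\star}$-module decomposition of Proposition \ref{linearindependencehomparts} with the explicit description of $\mathcal{H}^n$ in \eqref{spanharmoniccomp} and the grading \eqref{gradedringonevariable} of $K^{\star}=K[|z|^2]$. First I would write $p(z,\bar z)=\sum_{m\in\Z}p_m\xi_m(z)$ with $p_m\in K^{\star}$, the sum being finite; this expresses $p$ along the free generators of $\mathcal{H}$. Sorting the monomial basis elements $z^k\bar z^l$ of $K[z,\bar z]$ by the value $m=k-l$ identifies the term $p_m\xi_m(z)$ with the part of $p$ supported on those monomials, so that for $m\geq 0$ one has $p_m(|z|^2)=\sum_{l\geq 0}c_{l+m,l}|z|^{2l}$ in the notation of Lemma \ref{polyharmonicorder}, and symmetrically for $m<0$.

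The key step is a degree bound on each $p_m$: I claim $\deg p_m<n$ as a polynomial in $|z|^2$. By Lemma \ref{polyharmonicorder} (equivalently \eqref{spanharmoniccomp}), a monomial $z^k\bar z^l$ occurs in $p$ only if $k<n$ or $l<n$; applied to the monomials $z^{l+m}\bar z^l$ making up $p_m\xi_m(z)$ with $m\geq 0$, this forces $l+m<n$ or $l<n$, and in either case $l<n$ since $m\geq 0$. The case $m<0$ is identical after conjugation. Hence every $p_m$ has degree at most $n-1$ in $|z|^2$. Alternatively, one can first use Lemma \ref{mthpartlaplacepowers} or Corollary \ref{invariantsubspacescor} to see that each $p_m\xi_m(z)$ is separately polyharmonic of order $n$, and then argue as above.

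It remains to rewrite the bound in the binomial basis. Since $(1-|z|^2)^k$ has degree exactly $k$, the elements $(1-|z|^2)^0,\ldots,(1-|z|^2)^{n-1}$ form a $K$-basis for the space of polynomials in $|z|^2$ of degree $\leq n-1$; expanding each $p_m=\sum_{k=0}^{n-1}c_{m,k}(1-|z|^2)^k$ and reassembling gives
\[
p(z,\bar z)=\sum_{k=0}^{n-1}(1-|z|^2)^k q_k(z,\bar z),\qquad q_k(z,\bar z)=\sum_{m}c_{m,k}\xi_m(z),
\]
where each $q_k$ is a finite $K$-linear combination of the harmonic monomials $\xi_m(z)$ and so lies in $\mathcal{H}^1$; this simultaneously shows that $p$ lies in the $K^{\star}$-span of $\mathcal{H}^1$. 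The only real obstacle is the degree bound of the middle paragraph — namely convincing oneself that \eqref{spanharmoniccomp} genuinely constrains each individual $p_m$ and that the constraint is uniform in the sign of $m$ — and this is a short combinatorial check rather than a computation. An induction on $n$ via $\partial\bar\partial\colon h_n\to h_{n-1}$ would also work, but would require making that surjectivity explicit.
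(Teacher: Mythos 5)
Your proof is correct and follows essentially the same route as the paper's: both rest on Lemma \ref{polyharmonicorder} to bound the power of $|z|^2$ attached to each harmonic monomial and then expand $|z|^{2l}$ in the basis $(1-|z|^2)^0,\ldots,(1-|z|^2)^{n-1}$. Your intermediate grouping of monomials into the components $p_m\xi_m(z)$ is a harmless reorganization of the paper's monomial-by-monomial argument, and your degree bound $l<n$ in both sign cases is verified correctly.
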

\begin{proof}
By the preceding Lemma \ref{polyharmonicorder}, any $p(z,\bar{z}) \in \mathcal{H}^n$ is a linear combination of terms of the form $z^k\bar{z}^l$ where either $k < n$ or $l < n$. In the case that $k \geq l$, we may write   
\begin{equation*}
z^k\bar{z}^l=z^{k-l}|z|^{2l}=z^{k-l}\sum_{i=0}^{l}\binom{l}{i}(-1)^i(1-|z|^2)^i.
\end{equation*}
Note in particular that $l \leq n-1$ and that $z^{k-l}$ is harmonic. By taking conjugates and interchanging $k$ for $l$ we get a similar expression in the case that $l \geq k$.  
\end{proof}

A most relevant subspace is given by the polynomials $p(z,\bar{z}) \in K[z,\bar z]$ that satisfy $L_{\gamma_1,\gamma_2}p(z,\bar{z})=0$ with respect to the operator 
\begin{equation}
\label{pqoperator}
L_{\gamma_1,\gamma_2}=(1-|z|^2)\partial \bar{\partial}+\gamma_1 z\partial+\gamma_2 \bar{z} \bar \partial-\gamma_1 \gamma_2 \in \mathfrak{H}_2(K),
\end{equation}
for some $\gamma_1, \gamma_2 \in K$. We will refer to such a polynomial as a $(\gamma_1,\gamma_2)$-harmonic poly\-nomial and denote the corresponding $K$-vector subspace by $\mathcal{H}_{\gamma_1,\gamma_2}$. Note that $p(z,\bar{z})$ is $(\gamma_1,\gamma_2)$-harmonic if and only if its conjugate $\bar{p}(z,\bar z)=p(\bar z, z)$ is $(\gamma_2,\gamma_1)$-harmonic, with conjugation taken in the sense of the first paragraph of section two. Due to their relevance for that which is to come, we can already now mention that a characterization for such polynomials or functions have been achieved, and that their representation in $\mathcal{H}$ is well understood in terms of familiar functions in $K^{\star}$. The result we are referring to was given in \cite[Theorem 5.4]{OK}, and more generally in \cite[Theorem 5.1]{K}. It states that a polynomial $p(z,\bar z) \in K[z,\bar z]$ is $(\gamma_1,\gamma_2)$-harmonic polynomial if and only if 
\begin{align}
\label{pqpolrepresentations}
p(z,\bar z)&=\sum_{m=1}^\infty c_m F(-\gamma_1,m-\gamma_2,m+1;|z|^2)z^m  \\  \notag
& \ +\sum_{m=1}^\infty c_{-m} F(-\gamma_2,m-\gamma_1,m+1;| z |^2)\bar z^m+c_0 F(-\gamma_1,-\gamma_2,1;|z|^2),
\end{align} 
for some numbers $c_m \in K$ that are non-zero for at most finitely many $m \in \Z$, while the hypergeometric functions 
\begin{equation*}
F(a,b,c;x)=\sum_{k=0}^{\infty} \frac{(a)_k(b)_k}{(c)_k}\frac{x^{k}}{k!},
\end{equation*}
that appear in this expression are polynomials in $K[|z|^2]$ only if $(\gamma_1,\gamma_2) \in \N \times K$ or $(\gamma_1,\gamma_2) \in K \times \N$. They arise here as a result of the mapping determined through \eqref{homconditionR2} that was described earlier. It brings the operator $L_{\gamma_1,\gamma_2} \in \mathfrak{H}_2 \subset A_2$ to the family of hypergeometric operators 
\begin{equation*}
T_{m,L_{\gamma_1,\gamma_2}}=x(1-x)\frac{d^2}{dx^2}+[|m|+1-(|m|+1-\gamma_1-\gamma_2)x]\frac{d}{dx}+\gamma_1\gamma_2-r_m|m|, 
\end{equation*}  
for $m \in \Z$, where $\{r_m\}_{m=-\infty}^{\infty}$ is the complex sequence given by $r_m=\gamma_1$ for $m \geq 0$ and $r_m=\gamma_2$ for $m < 0$. 

The original form of this statement was given in more general terms, and states that a function $u \in C^2(\D)$ is $(\gamma_1,\gamma_2)$-harmonic if and only if it satisfies \eqref{pqpolrepresentations} for some numbers $c_m \in \C$ subjected to 
\begin{equation}
\label{limsupcoeff}
\limsup_{|m| \rightarrow \infty} |c_m|^{1/|m|}\leq 1.
\end{equation}
Convergence should then be taken as absolute convergence in the space of smooth functions on $\D$. The coefficients in this expansion were also shown to satisfy 
\begin{equation*}
\left\{ 
\begin{array}{ccc}
m!c_m &= \partial^m u(0), \\

m!c_{-m} &= \bar \partial^m u(0),& \quad m \in \N.  
\end{array}\right.
\end{equation*}
We will refer to this result on several occasions, in the confinement to polynomials and the later more general setting of smooth functions on the unit disc. For now though, we will proceed as we have done and restrict the current discussion to polynomials in $K[z,\bar z]$.   
 
As was noted in the previous paragraph, the realm of parameters in $K \times K$ that produces non-trivial (polynomial) solutions to the equation $L_{\gamma_1,\gamma_2}p(z,\bar z)=0$ is rather restricted to the case where either $(\gamma_1,\gamma_2) \in \N \times K$ or $(\gamma_1,\gamma_2) \in K \times \N$. If the parameters are such that $(\gamma_1,\gamma_2) \in \N \times (K \setminus \N)$, then \eqref{pqpolrepresentations} shows that $c_{-m}=0$ for $m > \gamma_1$. In this case, the second of the two sums in \eqref{pqpolrepresentations} can be written as 
\begin{equation*}
\sum_{m=1}^{\gamma_1} c_{-m} F(-\gamma_2,m-\gamma_1,m+1;| z |^2)\bar z^m=\sum_{m=1}^{\gamma_1}\sum_{k=0}^{\gamma_1-m} c_{-m} \frac{(-\gamma_2)_k(m-\gamma_1)_k}{(m+1)_k}\frac{|z|^{2k}}{k!}\bar z^m.
\end{equation*}    
If we now employ the identity $|z|^{2k}=\sum_{j=0}^{k}\binom{k}{j}(-1)^j(1-|z|^2)^j$ and rearrange terms, we can rewrite this last expression in the form of 
\begin{equation*}
c'_0q_0(z,\bar z)+c'_1(1-|z|^2)q_1(z,\bar z)+\ldots+c'_{\gamma_1-1}(1-|z|^2)^{\gamma_1-1}q_{\gamma_1-1}(z,\bar z),
\end{equation*} 
for some $c'_j \in K$ and harmonic polynomials $q_j(z,\bar z) \in \mathcal{H}^1$ such that $q_j(z,0) = 0$ for $0 \leq j \leq \gamma_1-1$. This also shows that the second sum in \eqref{pqpolrepresentations} is polyharmonic to an order at least $\gamma_1 \in \N$. Since the remaining two parts in \eqref{pqpolrepresentations} are polyharmonic to an order of $\gamma_1+1 \in \N$, their sum must be polyharmonic to an order at least $\gamma_1+1 \in \N$. In a similar way, we get that the first of the two sums is polyharmonic to an order of $\gamma_2 \in \N$ when $(\gamma_1,\gamma_2) \in (K \setminus \N) \times \N$. The least order to which the $(\gamma_1,\gamma_2)$-harmonic polynomial in \eqref{pqpolrepresentations} is polyharmonic will then also depend on the remaining terms. 

We will argue along similar lines in what follows, but simplify somewhat and restrict our attention to the case $(\gamma_1,\gamma_2) \in \N \times \N$ as we conclude. In the aim for some precision and in order to spare some words, we let $S_{\N} \subset K[x]$ be the multiplicative set
\begin{equation}
\label{localization}
S_{\N}=\{p(x) \in K[x]: p(m) \neq 0, \ m \geq 0\},
\end{equation}
and write $S_{\N}^{-1}K[x]$ for the localization of $K[x]$ by $S_{\N}$. Elements in $S_{\N}^{-1}K[x]$ are thus fractions of polynomials with denominators in $S_{\N}$. It is worth to emphasize, despite its tautological content, that the ring $S_{\N}^{-1}K[x]$ is closed under addition and multiplication. We stress this in relation to some of the less constructive arguments that will be given later on, where the reduction of more complicated such expressions to a single element in $S_{\N}^{-1}K[x]$ will work to our advantage.   
\begin{lemma}
\label{pqtopolylemma}
Let $\gamma_1 \in \N$ and let $\gamma_2 \in K$. Consider the sequence of polynomials 
\begin{equation*}
q_m(z,\bar z)=F(-\gamma_1,m-\gamma_2,m+1;|z|^2)z^m, \quad m \geq 1. 
\end{equation*}
Then there exist $t_0(x),\ldots,t_{\gamma_1}(x) \in S_{\N}^{-1}K[x]$ and a sequence of polynomials 
\begin{equation}
\label{mcomponentspqtopolylemma}
s_{m}(|z|^2)=t_0(m)+t_1(m)(1-|z|^2)+\ldots+t_{\gamma_1}(m)(1-|z|^2)^{\gamma_1}, \quad m \geq 1,
\end{equation}
such that 
\begin{equation}
\label{partialsumpqtopollemma}
q_m(z,\bar z)=s_m(|z|^2)z^m, \quad  m \geq 1.
\end{equation} 
\end{lemma}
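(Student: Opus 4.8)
The plan is to work directly from the series definition of the Gauss hypergeometric function and then re-expand in powers of $(1-|z|^2)$. Since $\gamma_1 \in \N$, the Pochhammer symbol $(-\gamma_1)_k$ vanishes for $k > \gamma_1$, so
\begin{equation*}
F(-\gamma_1,m-\gamma_2,m+1;|z|^2)=\sum_{k=0}^{\gamma_1}\frac{(-\gamma_1)_k(m-\gamma_2)_k}{(m+1)_k}\frac{|z|^{2k}}{k!}
\end{equation*}
is an honest polynomial in $|z|^2$ of degree at most $\gamma_1$, with no convergence issue to address. I would then substitute the binomial identity $|z|^{2k}=\sum_{j=0}^{k}\binom{k}{j}(-1)^j(1-|z|^2)^j$ (the one already used in the proof of Lemma~\ref{span} and just before the present lemma) into this finite sum and interchange the two finite summations, obtaining
\begin{equation*}
F(-\gamma_1,m-\gamma_2,m+1;|z|^2)=\sum_{j=0}^{\gamma_1}\left(\sum_{k=j}^{\gamma_1}(-1)^j\binom{k}{j}\frac{(-\gamma_1)_k(m-\gamma_2)_k}{(m+1)_k\,k!}\right)(1-|z|^2)^j.
\end{equation*}
This exhibits the candidate coefficients: $t_j(x)$ is the inner sum with $m$ replaced by the indeterminate $x$.

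It then remains to check that each $t_j(x)$ lies in $S_{\N}^{-1}K[x]$ and that $s_m(|z|^2)=\sum_{j=0}^{\gamma_1}t_j(m)(1-|z|^2)^j$ satisfies \eqref{partialsumpqtopollemma}. For the membership, note that for each fixed $k$ the factor $(-\gamma_1)_k$ is a constant in $K$, the factor $(x-\gamma_2)_k=(x-\gamma_2)(x-\gamma_2+1)\cdots(x-\gamma_2+k-1)$ is a polynomial in $K[x]$, and the denominator $(x+1)_k=(x+1)(x+2)\cdots(x+k)$ is a polynomial whose value at every integer $m\ge 0$ equals $(m+k)!/m!\ge k!>0$; hence $(x+1)_k\in S_{\N}$ and every summand of $t_j(x)$ lies in $S_{\N}^{-1}K[x]$. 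Since $S_{\N}^{-1}K[x]$ is closed under addition, $t_j(x)\in S_{\N}^{-1}K[x]$ for $0\le j\le\gamma_1$. For the identity, evaluation at $x=m$ is legitimate for every $m\ge 1$ precisely because the denominators do not vanish there, so $t_j(m)\in K$, $s_m(|z|^2)\in K[|z|^2]$, and multiplying the last display through by $z^m$ yields exactly $q_m(z,\bar z)=s_m(|z|^2)z^m$.

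There is no real obstacle here: the argument is a bookkeeping rearrangement of a finite polynomial identity together with the observation that the denominators that appear are units in the localization $S_{\N}^{-1}K[x]$. If one prefers a coordinate-free formulation, the same reasoning can be packaged as the statement that $\{1,|z|^2,\dots,|z|^{2\gamma_1}\}$ and $\{1,(1-|z|^2),\dots,(1-|z|^2)^{\gamma_1}\}$ are two $K$-bases of the same $(\gamma_1+1)$-dimensional space related by a fixed invertible matrix over $K$, so the $(1-|z|^2)$-coefficients of $F(-\gamma_1,m-\gamma_2,m+1;|z|^2)$ are $K$-linear combinations of its $|z|^{2k}$-coefficients and hence again lie in $S_{\N}^{-1}K[x]$; the only place the hypothesis $\gamma_1\in\N$ is used is to guarantee that the hypergeometric series terminates.
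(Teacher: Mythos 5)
Your proposal is correct and follows essentially the same route as the paper: expand the terminating hypergeometric series, substitute $|z|^{2k}=\sum_{j=0}^{k}\binom{k}{j}(-1)^j(1-|z|^2)^j$, interchange the finite sums, and read off $t_j(x)=(-1)^j\sum_{k=j}^{\gamma_1}\binom{k}{j}\frac{(-\gamma_1)_k(x-\gamma_2)_k}{(x+1)_k k!}$, which is exactly the expression in the paper's proof. Your added verification that $(x+1)_k\in S_{\N}$ and that evaluation at $x=m$ is legitimate only makes explicit what the paper leaves as evident.
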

\begin{proof}
Let $\gamma_1 \in \N$ and let $\gamma_2 \in K$. Set 
\begin{equation}
\label{hypergeompollemmapqtopoly}
s_m(|z|^2)=F(-\gamma_1,m-\gamma_2,m+1;|z|^2), \quad m \geq 1,
\end{equation}
and recall that
\begin{equation*}
|z|^{2k}=\sum_{j=0}^{k}\binom{k}{j}(-1)^j(1-|z|^2)^j.
\end{equation*}
By employing the last in relation to \eqref{hypergeompollemmapqtopoly}, we can write
\begin{equation*}
s_{m}(|z|^2)=\sum_{k=0}^{\gamma_1}\sum_{j=0}^{k}\frac{(-\gamma_1)_k(m-\gamma_2)_k}{(m+1)_k k!}\binom{k}{j}(-1)^j(1-|z|^2)^j, \quad m \geq 1. 
\end{equation*}
Let 
\begin{equation}
\label{rationalexpressionspqtopolyemma}
t_j(x)=(-1)^j\sum_{k=j}^{\gamma_1}\frac{(-\gamma_1)_k(x-\gamma_2)_k}{(x+1)_k k!}\binom{k}{j} \in S_{\N}^{-1}K[x], \quad 0 \leq j \leq \gamma_1. 
\end{equation}
Then 
\begin{equation*}
s_{m}(|z|^2)=t_0(m)+t_1(m)(1-|z|^2)+\ldots+t_{\gamma_1}(m)(1-|z|^2)^{\gamma_1}, \quad m \geq 1.
\end{equation*}
This completes the proof of the statement. 
\end{proof}
Note that the rational expressions $t_0(x),t_1(x),\ldots,t_{\gamma_1}(x) \in S_{\N}^{-1}K[x]$ in  \eqref{rationalexpressionspqtopolyemma} are uniquely determined by $\gamma_1 \in \N$ and $\gamma_2 \in K$. This can easily be checked by employing Lemma \ref{lindependence.k}.
\begin{cor}
\label{pqtopolycor}
Let $\gamma_1 \in \N$ and $\gamma_2 \in K$. Consider the sequence of polynomials 
\begin{equation}
\label{Ssumpqtopoly}
S_N(z,\bar z)=\sum_{m=1}^{N} c_m F(-\gamma_1,m-\gamma_2,m+1;|z|^2)z^m, \quad N \geq 1,
\end{equation}
for some $c_m \in K$. Then
\begin{equation*}
S_N(z,\bar z)=q_{N,0}(z,\bar z)+ (1-|z|^2) q_{N,1}(z,\bar z)+\ldots+(1-|z|^2)^{\gamma_1} q_{N,\gamma_1}(z,\bar z),
\end{equation*}
for some harmonic polynomials $q_{N,j}(z,\bar z) \in \mathcal{H}^1$ such that $q_{N,j}(0,\bar z)=0$ for all $0 \leq j \leq \gamma_1$. In particular, the sum $S_N(z,\bar z)$ in \eqref{Ssumpqtopoly} is poly\-harmonic to an order at least $\gamma_1+1\in \N$ for all $N \geq 1$.
\end{cor}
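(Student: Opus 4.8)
The plan is to substitute the pointwise decomposition furnished by Lemma~\ref{pqtopolylemma} into the finite sum \eqref{Ssumpqtopoly} and then reorganize by powers of $1-|z|^2$. Applying Lemma~\ref{pqtopolylemma} with the given $\gamma_1 \in \N$ and $\gamma_2 \in K$ produces rational expressions $t_0(x),\ldots,t_{\gamma_1}(x) \in S_{\N}^{-1}K[x]$ for which $F(-\gamma_1,m-\gamma_2,m+1;|z|^2)z^m = s_m(|z|^2)z^m$ with $s_m$ as in \eqref{mcomponentspqtopolylemma}. Inserting this into \eqref{Ssumpqtopoly} and interchanging the two finite summations gives
\[
S_N(z,\bar z) = \sum_{m=1}^{N} c_m \sum_{j=0}^{\gamma_1} t_j(m)(1-|z|^2)^j z^m = \sum_{j=0}^{\gamma_1}(1-|z|^2)^j\Big(\sum_{m=1}^{N} c_m t_j(m)\,z^m\Big),
\]
so the natural choice is $q_{N,j}(z,\bar z) := \sum_{m=1}^{N} c_m t_j(m)\,z^m$ for $0 \leq j \leq \gamma_1$.

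Next I would verify that each $q_{N,j}$ has the asserted properties. Since the denominator of $t_j(x)$ lies in the multiplicative set $S_{\N}$ of \eqref{localization}, it does not vanish at any non-negative integer, so $t_j(m) \in K$ is well defined for every $m \geq 1$; hence $q_{N,j}(z,\bar z)$ is a genuine polynomial, indeed a holomorphic one of the form $\sum_{m \geq 1} a_m z^m$. Such a polynomial is annihilated by $\bar\partial$, hence by $\partial\bar\partial$, so $q_{N,j} \in \mathcal{H}^1$; and since the sum runs over $m \geq 1$ there is no constant term, so $q_{N,j}(0,\bar z) = 0$. This is exactly the claimed Almansi-type decomposition.

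For the order estimate I would observe that expanding $(1-|z|^2)^j$ through the same binomial identity used in Lemma~\ref{pqtopolylemma} shows that every monomial occurring in $(1-|z|^2)^j q_{N,j}$ has the form $z^k\bar z^l$ with $l \leq j \leq \gamma_1$. Thus each monomial of $S_N$ satisfies $l < \gamma_1+1$, and by Lemma~\ref{polyharmonicorder} (equivalently \eqref{spanharmoniccomp}) with $n = \gamma_1+1$ we get $S_N \in \mathcal{H}^{\gamma_1+1}$, i.e.\ $\partial^{\gamma_1+1}\bar\partial^{\gamma_1+1}S_N = 0$; one may also note directly that $(1-|z|^2)^j q_{N,j}(z)$ is a polynomial of degree $j$ in $\bar z$ and so lies in $\mathcal{H}^{j+1} \subseteq \mathcal{H}^{\gamma_1+1}$ for $j \leq \gamma_1$.

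There is no serious obstacle: the corollary is essentially bookkeeping on top of Lemma~\ref{pqtopolylemma}. The only points needing a word of care are that the interchange of summations is legitimate because both sums are finite, and that the $q_{N,j}$ are honest polynomials rather than rational functions — which is precisely why the localization $S_{\N}^{-1}K[x]$ was set up so that its denominators never vanish at non-negative integers, the evaluations $t_j(m)$ with $m \geq 1$ being the only ones we ever need.
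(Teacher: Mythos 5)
Your proposal is correct and follows essentially the same route as the paper: substitute the decomposition from Lemma~\ref{pqtopolylemma}, swap the two finite sums, set $q_{N,j}(z,\bar z)=\sum_{m=1}^{N}c_m t_j(m)z^m$, and invoke Lemma~\ref{polyharmonicorder} for the order bound. Your added remarks on why the $t_j(m)$ are well defined and why each $q_{N,j}$ is harmonic with $q_{N,j}(0,\bar z)=0$ only make explicit what the paper leaves implicit.
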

\begin{proof}
By Lemma \ref{pqtopolylemma} we can write 
\begin{align*}
S_{N}(z,\bar z)=\sum_{m=1}^{N} c_{m}\big(t_0(m)+(1-|z|^2)t_1(m)+\ldots+(1-|z|^2)^{\gamma_1} t_{\gamma_1}(m)\big)z^m,
\end{align*}
for some $t_0(x),t_1(x),\ldots,t_{\gamma_1}(x) \in S_{\N}^{-1}K[x]$. Let $k_{m,j}=c_mt_j(m) \in K$ and define
\begin{equation*}
q_{N,j}(z,\bar z)=\sum_{m=1}^{N} k_{m,j} z^m \in \mathcal{H}^1, \quad 0 \leq j \leq \gamma_1. 
\end{equation*}
Then, 
\begin{equation*}
S_{N}(z,\bar z)=\sum_{j=1}^{\gamma_1}(1-|z|^2)^jq_{N,j}(z,\bar z).
\end{equation*} 
The conclusion is now evident in view of Lemma \ref{polyharmonicorder}. 
\end{proof}

We can then give the following sufficient criteria in respect of the order to which the $(\gamma_1,\gamma_2)$-harmonic polynomial is poly\-harmonic when $\gamma_1,\gamma_2 \in \N$. 

\begin{cor}
\label{pqtopolyprop}
Let $\gamma_1,\gamma_2 \in \N$. Let $p(z,\bar z) \in K[z,\bar z]$ be a $(\gamma_1,\gamma_2)$-harmonic poly\-nomial. Then $p(z,\bar z)$ is polyharmonic to an order at least $\max(\gamma_1+1,\gamma_2+1) \in \N$. 
\end{cor}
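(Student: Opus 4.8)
The plan is to feed the explicit description \eqref{pqpolrepresentations} of a $(\gamma_1,\gamma_2)$-harmonic polynomial into Corollary \ref{pqtopolycor}, once directly and once after conjugation, and then to recombine the resulting pieces inside a single polyharmonic space. Since $\gamma_1,\gamma_2 \in \N$, all the hypergeometric functions occurring in \eqref{pqpolrepresentations} are polynomials and only finitely many of the coefficients $c_m$ are non-zero, so the cited characterization lets me write $p(z,\bar z) = P_0 + P_1 + P_2$, where
\begin{equation*}
P_1 = \sum_{m \geq 1} c_m F(-\gamma_1, m-\gamma_2, m+1; |z|^2) z^m, \qquad P_2 = \sum_{m \geq 1} c_{-m} F(-\gamma_2, m-\gamma_1, m+1; |z|^2) \bar z^m,
\end{equation*}
and $P_0 = c_0 F(-\gamma_1, -\gamma_2, 1; |z|^2)$, each of these being a finite sum.

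First I would observe that $P_1$ is precisely of the form \eqref{Ssumpqtopoly} with parameter $\gamma_1 \in \N$, so Corollary \ref{pqtopolycor} gives $P_1 \in \mathcal{H}^{\gamma_1+1}$. For $P_2$ I would pass to the conjugate in the sense of section two: since $\overline{|z|^2} = |z|^2$, the conjugate $\overline{P_2}$ equals $\sum_{m \geq 1} c_{-m} F(-\gamma_2, m-\gamma_1, m+1; |z|^2) z^m$, which is again of the form \eqref{Ssumpqtopoly}, now with parameter $\gamma_2 \in \N$; hence $\overline{P_2} \in \mathcal{H}^{\gamma_2+1}$ by Corollary \ref{pqtopolycor}. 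Because the description \eqref{spanharmoniccomp} of $\mathcal{H}^n$ is visibly symmetric under interchanging $z$ and $\bar z$, each space $\mathcal{H}^n$ is closed under conjugation, so $P_2 = \overline{\overline{P_2}} \in \mathcal{H}^{\gamma_2+1}$ as well. Finally, $F(-\gamma_1, -\gamma_2, 1; x)$ is a polynomial in $x$ of degree at most $\min(\gamma_1,\gamma_2)$, so $P_0$ is a $K$-linear combination of monomials $|z|^{2k} = z^k \bar z^k$ with $k \leq \min(\gamma_1,\gamma_2)$, and \eqref{spanharmoniccomp} places each such monomial in $\mathcal{H}^{\min(\gamma_1,\gamma_2)+1}$.

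To conclude I would set $M = \max(\gamma_1+1, \gamma_2+1) = \max(\gamma_1,\gamma_2)+1$ and invoke the nesting $\mathcal{H}^n \subset \mathcal{H}^{n'}$ for $n \leq n'$: all three summands $P_0, P_1, P_2$ then lie in $\mathcal{H}^M$, and since $\mathcal{H}^M$ is a $K$-vector subspace of $K[z,\bar z]$ so does their sum $p$, which is exactly the assertion that $p$ is polyharmonic to an order at least $\max(\gamma_1+1,\gamma_2+1)$. Almost everything here is bookkeeping on top of Corollary \ref{pqtopolycor}; the one step that I expect to require a line of genuine care is the conjugation argument for $P_2$ — verifying that the $\gamma_2 \in \N$ instance of Corollary \ref{pqtopolycor} applied to $\overline{P_2}$, combined with the conjugation-invariance of the spaces $\mathcal{H}^n$, legitimately transfers the polyharmonicity bound back to $P_2$ itself. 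The term $P_0$ could alternatively be absorbed by extending the summation range in Corollary \ref{pqtopolycor} to allow $m = 0$, but treating it directly via \eqref{spanharmoniccomp} seems cleaner.
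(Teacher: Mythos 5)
Your proposal is correct and follows essentially the same route as the paper: decompose $p$ via \eqref{pqpolrepresentations}, apply Corollary \ref{pqtopolycor} to the first sum, handle the second sum by conjugation, treat the hypergeometric term $c_0F(-\gamma_1,-\gamma_2,1;|z|^2)$ directly, and combine using the nesting of the spaces $\mathcal{H}^n$. Your extra care with the conjugation-invariance of $\mathcal{H}^n$ via \eqref{spanharmoniccomp} only makes explicit what the paper leaves implicit.
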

\begin{proof}
Since $p(z,\bar z) \in K[z,\bar z]$ is a $(\gamma_1,\gamma_2)$-harmonic polynomial, we can express it in the form of \eqref{pqpolrepresentations} for some $c_m \in K$ that are non-zero for at most finitely many $m \in \Z$. An application of Corollary \ref{pqtopolycor} to the first sum in \eqref{pqpolrepresentations} shows that this sum is polyharmonic to an order at least $\gamma_1+1 \in \N$. A similar conclusion for the second sum in \eqref{pqpolrepresentations} is obtained by taking conjugates, and shows that it is polyharmonic to an order at least $\gamma_2+1$. The last term in \eqref{pqpolrepresentations} is given by
\begin{equation*}
F(-\gamma_1,-\gamma_2,1;|z|^2)=\sum_{k=0}^{\min(\gamma_1,\gamma_2)}\frac{(-\gamma_1)_k (-\gamma_2)_k}{(m+1)_k}\frac{|z|^{2k}}{k!},
\end{equation*}  
which is polyharmonic to an order of $\min(\gamma_1+1,\gamma_2+1)$.  
\end{proof}

We shall continue with the case where $(\gamma_1,\gamma_2) \in \N \times \Z \cup \Z \times \N$ and turn the question on its head. The restriction to such parameter values includes the case $\gamma_1=\gamma_2=0$, associated with the space $\mathcal{H}_{0,0}=\mathcal{H}^{1}$ of harmonic polynomials, and the two cases where either $\gamma_1$ or $\gamma_2$ are non-negative whole numbers while $\gamma_2=-1$ and $\gamma_1=-1$, respectively. Even more relevant perhaps is the case where $\gamma_1=\gamma_2 \in \N$. We will start with the former, and consider the representational form for polyharmonic polynomials in terms of the spaces $\mathcal{H}_{n,-1}$ and $\mathcal{H}_{-1,n}$ when $n \in \N$ is a non-negative integer. 

In connection with the last, we will also refer to the conjugated set $\bar{U}$ of a subset $U$ of $K[z,\bar z]$, by which we mean the set obtained from $U$ by conjugating each of its elements, i.e. $p(z,\bar z) \in U$ if and only if $\bar{p}(z,\bar z)=p(\bar z,z) \in \bar{U}$. We also introduce the $K$-vector spaces 
\begin{equation}
\label{spanbasesn}
\mathcal{E}_{k,n}=\textnormal{span}_K\{e_{m,n}(z,\bar z): m \geq k\}, \quad k \geq 0, \quad n \in \N,
\end{equation}
with $e_{m,n}(z,\bar z)$ as in \eqref{bases} and $e_{-m,n}(z,\bar z)=\bar{e}_{m,n}(z,\bar z)$. Note that $\bar{\mathcal{E}}_{k,n}$ is the span of the polynomials $e_{m,n}(z,\bar z)$ for which $m \leq -k$ when $k,n \in \N$. Note further that $\mathcal{E}_{k,n} \cap \bar{\mathcal{E}}_{k,n}$ is empty for all $k > 0$, and that this intersection is the span of $(1-|z|^2)^n$ over $K$ when $k=0$, for $n \in \N$. In relation to \eqref{pqpolrepresentations}, we also observe that
\begin{equation*}
e_{m,n}(z,\bar z)=\sum_{k=0}^{n}\frac{(-n)_k(m+1)_k}{(m+1)_k}\frac{|z|^{2k}}{k!}z^m=F(-n,m-(-1),m+1;|z|^2)z^m.
\end{equation*} 
This seals the inclusion  
\begin{equation*}
e_{m,n}(z,\bar z) \in \left\{ 
\begin{array}{ccc}
\mathcal{H}_{n,-1}, &\quad m \geq 0, \\

\mathcal{H}_{-1,n}, &\quad m < 0, 
\end{array}\right.
\end{equation*}
and shows that $L_{n,-1}e_{m,n}(z,\bar z)=0$ or $L_{-1,n}e_{m,n}(z,\bar z)=0$, depending on whether $m \geq 0$ or $m < 0$. We choose to summarize these observations as follows.
\begin{lemma}
\label{decompgenharmpolinteger}
Let $n \in \N$. Let $q(z,\bar{z}) \in \mathcal{H}^1 \subset K[z,\bar z]$ and set 
\begin{equation*}
p(z,\bar{z})=(1-|z|^2)^n q(z,\bar{z}) \in K[z,\bar z].
\end{equation*}
Then $p(z,\bar z)$ is in the $K$-vector direct sum of the spaces $\mathcal{E}_{0,n} \cap \bar{\mathcal{E}}_{0,n}$, $\mathcal{E}_{1,n}$ and $\bar{\mathcal{E}}_{1,n}$. In symbols, 
\begin{equation*}
p(z,\bar{z}) \in \mathcal{E}_{0,n} \cap \bar{\mathcal{E}}_{0,n} \oplus \mathcal{E}_{1,n} \oplus \bar{\mathcal{E}}_{1,n}.
\end{equation*}
Furthermore, $\mathcal{E}_{1,n} \subset \mathcal{H}_{n,-1}$ and $\bar{\mathcal{E}}_{1,n} \subset \mathcal{H}_{-1,n}$, while $\mathcal{E}_{0,n} \cap \bar{\mathcal{E}}_{0,n} \subset \mathcal{H}_{n,-1} \cap \mathcal{H}_{-1,n}$.
\end{lemma}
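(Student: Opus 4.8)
The plan is to reduce the statement to the homogeneous components of $q$, read off the three memberships from the definitions \eqref{bases} and \eqref{spanbasesn}, and then obtain the inclusions into $\mathcal{H}_{n,-1}$ and $\mathcal{H}_{-1,n}$ from the hypergeometric characterization \eqref{pqpolrepresentations}. First I would expand $q(z,\bar z) = \sum_{m \in \Z} c_m \xi_m(z)$ as a finite $K$-linear combination of the monomials $\xi_m(z)$, which is possible because $q \in \mathcal{H}^1$; multiplying through by $(1-|z|^2)^n$ and recalling \eqref{bases} gives $p(z,\bar z) = \sum_{m \in \Z} c_m e_{m,n}(z,\bar z)$. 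Separating the terms with $m \geq 1$, those with $m \leq -1$, and the term with $m = 0$, the first block lies in $\mathcal{E}_{1,n}$ and the second in $\bar{\mathcal{E}}_{1,n}$ directly from \eqref{spanbasesn}, while $c_0 e_{0,n}(z,\bar z)$ lies in $\mathrm{span}_K\{(1-|z|^2)^n\}$, which is exactly $\mathcal{E}_{0,n} \cap \bar{\mathcal{E}}_{0,n}$. That the resulting sum is direct and not merely a sum I would deduce from Lemma \ref{lindependence.k}: these three subspaces are spanned by pairwise disjoint subsets of the $K$-linearly independent family $\mathcal{E}$.

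For the inclusions I would start from the identity recorded just before the statement, namely $e_{m,n}(z,\bar z) = F(-n,\, m-(-1),\, m+1;\, |z|^2)\, z^m$ for $m \geq 0$, which follows by expanding $|z|^{2k}$ in powers of $(1-|z|^2)$ and cancelling the Pochhammer factor $(m+1)_k$ in the hypergeometric series. For $m \geq 1$ this presents $e_{m,n}$ in precisely the form of \eqref{pqpolrepresentations} with $(\gamma_1,\gamma_2) = (n,-1)$ and with only the single coefficient $c_m$ nonzero, so \cite[Theorem 5.4]{OK} gives $e_{m,n} \in \mathcal{H}_{n,-1}$; since $\mathcal{H}_{n,-1}$ is a $K$-vector subspace of $K[z,\bar z]$, this yields $\mathcal{E}_{1,n} \subset \mathcal{H}_{n,-1}$. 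Conjugation, which by the remark after \eqref{pqoperator} carries $\mathcal{H}_{\gamma_1,\gamma_2}$ to $\mathcal{H}_{\gamma_2,\gamma_1}$, then gives $\bar{\mathcal{E}}_{1,n} \subset \mathcal{H}_{-1,n}$ because $\bar{e}_{m,n}(z,\bar z) = e_{-m,n}(z,\bar z)$. Finally, at $m = 0$ the same computation gives $e_{0,n}(z,\bar z) = (1-|z|^2)^n = F(-n,1,1;|z|^2) = F(1,-n,1;|z|^2)$, which is simultaneously the $m = 0$ term of \eqref{pqpolrepresentations} for $(\gamma_1,\gamma_2) = (n,-1)$ and for $(\gamma_1,\gamma_2) = (-1,n)$, so $\mathcal{E}_{0,n} \cap \bar{\mathcal{E}}_{0,n} = \mathrm{span}_K\{(1-|z|^2)^n\} \subset \mathcal{H}_{n,-1} \cap \mathcal{H}_{-1,n}$.

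I do not expect a serious obstacle, as the lemma mostly repackages the observations made immediately before it; the two points that deserve a moment's attention are the degenerate normalization of the $m = 0$ term in \eqref{pqpolrepresentations} --- where both the parameter $m - \gamma_2$ and the lower parameter $m+1$ collapse, so the two hypergeometric evaluations at $m = 0$ need to be checked by hand --- and the explicit appeal to Lemma \ref{lindependence.k} to upgrade the sum of the three subspaces to a direct sum. If one preferred to avoid citing \cite{OK} here, the inclusions into $\mathcal{H}_{n,-1}$ could be made self-contained by verifying $L_{n,-1} e_{m,n}(z,\bar z) = 0$ directly through the Leibniz rule, writing all derivatives in terms of $|z|^2$; this is routine but longer.
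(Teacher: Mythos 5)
Your proposal is correct and follows essentially the same route as the paper: expand $q$ into its monomial components, reduce to the single terms $e_{m,n}(z,\bar z)$, identify $e_{m,n}(z,\bar z)=F(-n,m+1,m+1;|z|^2)z^m$ with the $(n,-1)$-harmonic representation \eqref{pqpolrepresentations}, handle negative indices by conjugation, and invoke Lemma \ref{lindependence.k} for directness of the sum. Your explicit treatment of the $m=0$ term and the sorting into the three subspaces simply spells out what the paper compresses into ``in view of the earlier comments.''
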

\begin{proof}
We already know from Lemma \ref{polyharmonicorder} that $q(z,\bar{z})$ can be written in the form $\sum_{m=0}^N c_mz^m+c_{-m}\bar{z}^m$ for some $N \in \N$, where $c_m \in K$ for $-N \leq m \leq N$. By conjugate symmetry and by projecting onto the $K^{\star}$-submodule of $\mathcal{H}$ that is spanned by $z^m$ using the projection operators in \eqref{projectionmaps}, we can assume that $p(z,\bar{z})$ is of the form $p_m(z,\bar{z})=(1-|z|^2)^nz^m=e_{m,n}(z,\bar z)$ for some $m \in \N$. In view of the earlier comments, we may then conclude that
\begin{equation*}
L_{n,-1}p_m(z,\bar{z})=L_{n,-1}e_{m,n}(z,\bar{z})=0,
\end{equation*}
for all $m \in \N$. We can also recall from Lemma \ref{lindependence.k} that the $e_{m,n}(z,\bar z) \in K[z,\bar z]$ are linearly independent over $K$ for $m \in \Z$.
\end{proof}

An application of Lemma \ref{decompgenharmpolinteger} to the Almansi representation for polyharmonic poly\-nomials that was given in Proposition \ref{almansi} yields the following representation for such polynomials, as sums over certain $(\gamma_1,\gamma_2)$-harmonic polynomials.  
\begin{prop}
\label{polyharmonicinclusiongenharmpol}
The space $\mathcal{H}^n$ of polyharmonic polynomials of order $n \in \N$ is contained in the $K$-vector direct sum, 
\begin{equation*}
\mathcal{H}^n \subset \ \bigoplus_{j=0}^{n-1} \mathcal{E}_{1,j} \ \bigoplus_{j=0}^{n-1}  \bar{\mathcal{E}}_{1,j} \bigoplus_{j=0}^{n-1} \mathcal{E}_{0,j} \cap \bar{\mathcal{E}}_{0,j},
\end{equation*} 
where $\mathcal{E}_{k,n}$ are the subspaces of $(n,-1)$-harmonic polynomials in \eqref{spanbasesn}. 
\end{prop}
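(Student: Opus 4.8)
The plan is to feed the Almansi representation of Proposition~\ref{almansi} into the termwise statement of Lemma~\ref{decompgenharmpolinteger}. Given $p(z,\bar z) \in \mathcal{H}^n$, Proposition~\ref{almansi} lets me write
\begin{equation*}
p(z,\bar z) = q_0(z,\bar z) + (1-|z|^2)q_1(z,\bar z) + \ldots + (1-|z|^2)^{n-1}q_{n-1}(z,\bar z),
\end{equation*}
for some harmonic polynomials $q_0(z,\bar z),\ldots,q_{n-1}(z,\bar z) \in \mathcal{H}^1$. Each term $(1-|z|^2)^j q_j(z,\bar z)$ is exactly of the shape to which Lemma~\ref{decompgenharmpolinteger} applies, with the integer $n$ of that lemma taken to be $j$; thus
\begin{equation*}
(1-|z|^2)^j q_j(z,\bar z) \in \mathcal{E}_{0,j} \cap \bar{\mathcal{E}}_{0,j} \oplus \mathcal{E}_{1,j} \oplus \bar{\mathcal{E}}_{1,j}, \quad 0 \leq j \leq n-1.
\end{equation*}
Summing these $n$ memberships over $j = 0,1,\ldots,n-1$ shows that $p(z,\bar z)$ belongs to the (a priori merely internal) sum of all the spaces $\mathcal{E}_{1,j}$, $\bar{\mathcal{E}}_{1,j}$ and $\mathcal{E}_{0,j} \cap \bar{\mathcal{E}}_{0,j}$ for $0 \leq j \leq n-1$.

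Next I would argue that this sum is a direct sum, which is what the statement asserts. For a fixed $j$, the space $\mathcal{E}_{1,j}$ is the $K$-span of the polynomials $e_{m,j}(z,\bar z)$ with $m \geq 1$, the space $\bar{\mathcal{E}}_{1,j}$ is the $K$-span of those with $m \leq -1$, and $\mathcal{E}_{0,j} \cap \bar{\mathcal{E}}_{0,j}$ is the line spanned by $(1-|z|^2)^j = e_{0,j}(z,\bar z)$, as noted in the paragraph preceding Lemma~\ref{decompgenharmpolinteger}. Consequently the collected spanning vectors of all the summands, taken over $0 \leq j \leq n-1$, form a subfamily of $\{e_{m,j}(z,\bar z) : m \in \Z,\ 0 \leq j \leq n-1\}$, indexed by pairwise distinct pairs $(m,j) \in \Z \times \N$. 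Lemma~\ref{lindependence.k} says such a family is linearly independent over $K$, so the subspaces it splits into are in direct sum, and the desired inclusion of $\mathcal{H}^n$ into that direct sum follows.

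The substance of the proof is thus carried by Proposition~\ref{almansi} and Lemma~\ref{decompgenharmpolinteger}; the only step that needs any attention is the verification of directness, and even that reduces at once to the global linear independence of the family $\{e_{m,j}(z,\bar z)\}$ recorded in Lemma~\ref{lindependence.k}. I do not anticipate a genuine obstacle, only the bookkeeping of matching each summand to its block of the independent family.
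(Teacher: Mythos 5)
Your proposal is correct and follows essentially the same route as the paper: feed the Almansi representation of Proposition \ref{almansi} into Lemma \ref{decompgenharmpolinteger} term by term and sum over $j$. The extra paragraph verifying directness via Lemma \ref{lindependence.k} is a detail the paper leaves implicit, but it is accurate and does not change the argument.
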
 
\begin{proof}
Let $p(z,\bar z)$ be a polynomial in $\mathcal{H}^n$. By Proposition \ref{almansi} we can write 
\begin{equation*}
p(z,\bar{z})=q_0(z,\bar{z})+(1-|z|^2)q_1(z,\bar{z})+\ldots+(1-|z|^2)^{n-1}q_{n-1}(z,\bar{z}),
\end{equation*}
for some $q_0(z,\bar z),q_1(z,\bar z),\ldots,q_{n-1}(z, \bar z) \in \mathcal{H}^1$. An application of Lemma \ref{decompgenharmpolinteger} gives the inclusion $(1-|z|^2)^j q_j(z,\bar{z}) \in \mathcal{E}_{0,j} \cap \bar{\mathcal{E}}_{0,j} \oplus \mathcal{E}_{1,j} \oplus \bar{\mathcal{E}}_{1,j}$ for $0 \leq j \leq n-1$, from which the result follows. 
\end{proof}

It can be worth to note that 
\begin{equation*}
\bigoplus_{j=0}^{n} \mathcal{E}_{0,j} \cap \bar{\mathcal{E}}_{0,j}=\textnormal{span}_K\{(1-|z|^2)^j: 0 \leq j \leq n\}=: P_n,
\end{equation*}
and so is the subspace $P_n \subset K[|z|^2]$ of all polynomials in $K[|z|^2]$ of degree less than or equal to $n \in \N$.

By the comments that were made earlier, we can write $\mathcal{H}=\cup_n\mathcal{H}^n$, where we recall that $\mathcal{H}^0 := \{0\}$ and that $\mathcal{H}^1 \subset \mathcal{H}^2 \subset \ldots \subset K[z,\bar z]$ are the spaces of poly\-harmonic polynomials of order $n \in \N_+$. The following result reinstates that which was established in section three, namely, that all polynomials in $K[z,\bar z]$ can be expressed in terms of the generalized harmonic polynomials $e_{m,n}(z,\bar z)$ in \eqref{bases}.
\begin{prop}
The algebra $K[z,\bar z]$ is the $K$-vector direct sum of the polynomial ring $K[|z|^2]$ together with the generalized harmonic spaces $\mathcal{E}_{1,n}, \bar{\mathcal{E}}_{1,n}$ for $n \geq 0$. In symbols, 
\begin{equation*}
K[z,\bar z] =\bigoplus_{j=0}^{\infty} \mathcal{E}_{1,j} \ \bigoplus_{j=0}^{\infty}  \bar{\mathcal{E}}_{1,j} \bigoplus K[|z|^2].
\end{equation*} 
\end{prop}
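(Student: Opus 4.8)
The plan is to reduce the statement to the two facts about the set $\mathcal{E}$ that are already available: Lemma~\ref{span}, that $\mathcal{E}$ spans $K[z,\bar z]$ over $K$, and Lemma~\ref{lindependence.k}, that $\mathcal{E}$ is $K$-linearly independent. Together these say $\mathcal{E}$ is a $K$-basis of $K[z,\bar z]$, so the whole proof amounts to recognizing that the three families of subspaces on the right-hand side are precisely the $K$-spans of the three blocks into which $\mathcal{E}$ is partitioned according to the sign of the homogeneity index.

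First I would unwind the definitions. By \eqref{spanbasesn}, $\mathcal{E}_{1,j}$ is the span of $\{e_{m,j}(z,\bar z): m \geq 1\}$ and $\bar{\mathcal{E}}_{1,j}$ is the span of $\{e_{-m,j}(z,\bar z): m\geq 1\}=\{e_{m,j}(z,\bar z): m \leq -1\}$, while by the grading \eqref{gradedringonevariable} the ring $K[|z|^2]$ is the span of $\{(1-|z|^2)^j : j\geq 0\}=\{e_{0,j}(z,\bar z): j\geq 0\}$. Taking the union over all $j\geq 0$, the corresponding index sets $\{(m,j): m\geq 1,\ j\geq 0\}$, $\{(m,j): m\leq -1,\ j\geq 0\}$ and $\{(0,j): j\geq 0\}$ are pairwise disjoint and have union $\Z\times\N$; hence the associated spanning sets are pairwise disjoint and their union is exactly $\mathcal{E}$.

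Then the conclusion follows in two steps. Since the union of the three spanning families is all of $\mathcal{E}$, Lemma~\ref{span} shows that the sum of the three subspaces on the right exhausts $K[z,\bar z]$. Since $\mathcal{E}$ is linearly independent over $K$ by Lemma~\ref{lindependence.k}, any relation writing $0$ as a sum of one element taken from each of the summands expands into a linear relation among distinct members of $\mathcal{E}$, so all coefficients, and therefore all those summands, vanish; this is precisely the directness of the sum. No genuine obstacle arises beyond the bookkeeping of indices: the only point requiring attention is that the terms $e_{0,n}(z,\bar z)=(1-|z|^2)^n$ must be kept out of the $\mathcal{E}_{1,n}$ and $\bar{\mathcal{E}}_{1,n}$ blocks and collected into the $K[|z|^2]$ summand, which is exactly why the index in those two families begins at $1$ rather than $0$.
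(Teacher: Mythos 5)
Your proof is correct, but it takes a more direct route than the paper does. The paper derives the statement as a corollary of Proposition \ref{polyharmonicinclusiongenharmpol}: it writes $K[z,\bar z]=\bigcup_n\mathcal{H}^n$, applies the Almansi representation of Proposition \ref{almansi} to each $\mathcal{H}^n$, and then uses Lemma \ref{decompgenharmpolinteger} to place each term $(1-|z|^2)^jq_j(z,\bar z)$ inside $\mathcal{E}_{0,j}\cap\bar{\mathcal{E}}_{0,j}\oplus\mathcal{E}_{1,j}\oplus\bar{\mathcal{E}}_{1,j}$, finally identifying $\bigoplus_j\mathcal{E}_{0,j}\cap\bar{\mathcal{E}}_{0,j}$ with $K[|z|^2]$. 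You instead bypass the polyharmonic filtration entirely and observe that the three summands are the spans of the three blocks of the $K$-basis $\mathcal{E}$ (Lemmas \ref{lindependence.k} and \ref{span}) obtained by partitioning the index set $\Z\times\N$ according to the sign of $m$; spanning gives exhaustion and linear independence gives directness. Your bookkeeping is accurate: $\mathcal{E}_{1,j}=\operatorname{span}_K\{e_{m,j}:m\geq 1\}$, $\bar{\mathcal{E}}_{1,j}=\operatorname{span}_K\{e_{m,j}:m\leq -1\}$, and $K[|z|^2]=\bigoplus_j K\cdot e_{0,j}$ by \eqref{gradedringonevariable}, and these index blocks are disjoint with union $\Z\times\N$. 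What the paper's longer route buys is the interpretation of the decomposition through the polyharmonic spaces $\mathcal{H}^n$ and the $(n,-1)$-harmonic structure of the summands, which is the thread the surrounding section is developing; what your route buys is economy and self-containment, and it is arguably closer in spirit to the paper's own remark that the proposition merely ``reinstates that which was established in section three.''
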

\begin{proof}
In view of the comments preceding the statement, this can be taken as a consequence  of Proposition \ref{polyharmonicinclusiongenharmpol}.
\end{proof}

We shall pass to the function setting later on. The polynomial ring can then be treated as a mere dense  subset of some larger space, for example $C^{k}(A)$, the space of $k$-times differentiable functions on some compact subset $A$ of the unit disc $\D$. The gist of the last statement may then be taken as the assertion that the span of the generalized harmonic polynomials $e_{m,n}$ are dense in this environment.

Another important case concerns the spaces $\mathcal{H}_{\alpha,\alpha}$ associated with
\begin{equation}
\label{nnharmonic}
L_{\alpha,\alpha}=(1-|z|^2)\partial \bar{\partial} +\alpha z\partial+\alpha\bar z \bar \partial - \alpha^2 \in \mathfrak{H}_2,
\end{equation}  
for $\alpha \in K$. The solutions $u \in C^{2}(\D)$ to the equation $L_{\alpha,\alpha}u(z)=0$ were thoroughly treated by A.~Olofsson in \cite{O14}, and includes the case for $\alpha=n \in \N$. They have the special property that $u \in C^{2}(\D)$ is a solution to the equation $L_{\alpha,\alpha}u=0$ if and only if $L_{\alpha,\alpha}\bar{u}=0$, and so are symmetric under conjugation. The case $\alpha=n \in \N$ is also special in the sense that all the hypergeometric functions appearing in \eqref{pqpolrepresentations} are polynomial expressions in $|z|^2=z \cdot \bar z$, regardless of $m \in \N$. To be explicit,  
\begin{equation}
\label{hypergeometricsum}
F(-n,|m|-n,|m|+1;|z|^2)=\sum_{j=0}^{n}\frac{(-n)_{j}(|m|-n)_j}{(|m|+1)_{j}}\frac{|z|^{2j}}{j!},
\end{equation}
for $m \in \Z$. As before, the notation $F(a,b,c;\cdot)$ is here used as short for the hyper\-geometric function $_2F_1(a,b,c;\cdot)$, and the symbols $(a)_n=a(a+1)\ldots(a+n-1)$ that appear above are the rising factorials or Pochhammer symbols. In general, the sum of the coefficients of such a hypergeometric function $F(a,b,c;\cdot)$ satisfies the identity
\begin{equation}
\label{vandermonde}
F(a,b,c;1)=\frac{(c-b)_{-a}}{(c)_{-a}},
\end{equation}
when $a$ is a non-positive integer. This is the so called Vandermonde identity, also known as the Chu-Vandermonde identity, and follows from Gauss more general theorem \cite[Corollary 2.2.3]{AAR}. 

We will proceed with the representational forms for $K[z,\bar z]$ in terms of the spaces $\mathcal{H}_{n,n}$ for $n \in \N$. To this end, we shall give the following important lemma.

\begin{lemma}
\label{Cdecomplemma}
Let $m,n \in \N$ and set
\begin{equation}
\label{hyperfunctionCdecomplemma}
\mathit{o}^{m,n}_l(|z|^2)=F(-n+l,m-n+l,m+1;|z|^2), \quad 0 \leq l \leq n.
\end{equation}
Then the polynomials in $K[|z|^2]$ given by  
\begin{equation}
\label{ApolynomialsCdecomplemma}
\mathit{O}_{l}^{m,n}(|z|^2)=(1-|z|^2)^l\mathit{o}^{m,n}_l(|z|^2), \quad 0 \leq l \leq n,
\end{equation}
are linearly dependent over $K$. 
\end{lemma}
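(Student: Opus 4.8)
The plan is to exhibit a nontrivial $K$-linear combination of the $n+1$ polynomials $\mathit{O}^{m,n}_0,\dots,\mathit{O}^{m,n}_n$ in \eqref{ApolynomialsCdecomplemma} that vanishes identically. The starting observation is that the first parameter $-n+l=-(n-l)$ in \eqref{hyperfunctionCdecomplemma} is a non-positive integer, so the hypergeometric series terminates and $\mathit{o}^{m,n}_l$ is a polynomial in $|z|^2$ of degree at most $n-l$; multiplying by $(1-|z|^2)^l$ then places every $\mathit{O}^{m,n}_l$ inside the space $P_n=\operatorname{span}_K\{(1-|z|^2)^j:0\le j\le n\}$, which has dimension $n+1$. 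First I would rewrite each $\mathit{O}^{m,n}_l$ explicitly in the basis $\{(1-|z|^2)^j\}_{j=0}^{n}$, expanding $\mathit{o}^{m,n}_l$ by means of the identity $|z|^{2k}=\sum_{j}\binom{k}{j}(-1)^j(1-|z|^2)^j$ exactly as in Lemma \ref{pqtopolylemma}. This produces $\mathit{O}^{m,n}_l=\sum_{j}a_{l,j}(1-|z|^2)^{j}$ with coefficients $a_{l,j}$ assembled from the Pochhammer symbols $(-n+l)_k$, $(m-n+l)_k$ and $(m+1)_k$, and organizes the data into an $(n+1)\times(n+1)$ matrix $A=(a_{l,j})$.

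With the matrix $A$ in hand, a dependence relation $\sum_l c_l\mathit{O}^{m,n}_l=0$ is equivalent to $(c_l)$ lying in the kernel of $A$, so the task reduces to showing that $A$ is singular. The natural tool for the required closed-form evaluations is the Chu--Vandermonde identity \eqref{vandermonde}, $F(a,b,c;1)=(c-b)_{-a}/(c)_{-a}$ for $a$ a non-positive integer: applied to \eqref{hyperfunctionCdecomplemma} it yields the boundary values $\mathit{o}^{m,n}_l(1)$ as ratios of Pochhammer symbols and, more generally, controls the top-degree columns of $A$. The aim would be to read off a single column or row relation among the $a_{l,j}$ from these evaluations and thereby force $\det A=0$.

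The step I expect to be the main obstacle is precisely this last one, and the difficulty surfaces already in the boundary computation: Chu--Vandermonde gives $\mathit{O}^{m,n}_0(1)=\mathit{o}^{m,n}_0(1)\neq 0$ while $\mathit{O}^{m,n}_l(1)=0$ for every $l\geq 1$, so the term $\mathit{O}^{m,n}_0$ is separated from the others by its value at $|z|^2=1$ and must be handled apart from the rest. The delicate point is therefore to locate the relation among the remaining pieces after the common $(1-|z|^2)$ factors have been stripped off: I would divide through by $(1-|z|^2)$, re-invoke \eqref{vandermonde} on the shifted hypergeometric parameters to track the surviving boundary coefficients, and iterate, descending in degree until the coefficient system collapses to a single scalar identity in the Pochhammer ratios. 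Carrying out this reduction carefully, and pinning down the exact scalar identity responsible for the collapse of $A$, is the crux of the argument.
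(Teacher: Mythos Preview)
There is a genuine problem here, but it originates in the statement rather than in your reasoning: the word ``dependent'' in Lemma~\ref{Cdecomplemma} is a typo for ``independent''. The paper's own proof establishes linear \emph{independence} of the $\mathit{O}_l^{m,n}$, and the very next result, Proposition~\ref{Olofssonbasis}, relies on this independence to conclude that these $n+1$ polynomials form a basis for the $(n+1)$-dimensional space of polynomials in $K[|z|^2]$ of degree at most $n$. Your plan to exhibit a nontrivial relation is therefore aimed at proving a false assertion, and the ``crux'' you anticipate---pinning down the scalar identity responsible for the collapse of the matrix $A$---will never materialize, because $A$ is in fact nonsingular.

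You have already stumbled onto the correct argument without recognising it. Your observation that $\mathit{O}_0^{m,n}(1)=\mathit{o}_0^{m,n}(1)=(n+1)_n/(m+1)_n\neq 0$ by Chu--Vandermonde, while $\mathit{O}_l^{m,n}(1)=0$ for $l\geq 1$, is precisely the paper's first step: it forces $c_0=0$ in any relation $\sum_l c_l\mathit{O}_l^{m,n}=0$. Your proposed iteration---divide through by $(1-|z|^2)$, apply Chu--Vandermonde to the shifted parameters, and descend in degree---is exactly how the paper proceeds, except that at each stage the conclusion is $c_l=0$ rather than a dependence. In the language of your matrix $A$, what you are describing shows that $A$ is triangular (in the basis $\{(1-|z|^2)^j\}$, the coefficient $a_{l,j}$ vanishes for $j<l$; this is made explicit in Lemma~\ref{basistransformlemmazerocoeff}) with nonzero diagonal entries $(n+1-l)_{n-l}/(m+1)_{n-l}$, hence invertible. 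Redirect your argument toward independence and it goes through essentially as written.
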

\begin{proof}
Let $m,n \in \N$ and write $\mathit{o}^{m,n}_l \in K^{\star}$ in the form of
\begin{equation}
\label{Cdecomplemmahyper}
\mathit{o}^{m,n}_l(|z|^2)=\sum_{j=0}^{n-l}\vartheta_{l,j}^{m,n}|z|^{2j}, \quad 0 \leq l \leq n,
\end{equation}
where
\begin{equation}
\label{Cdecomplemmacoefficients}
\vartheta_{l,j}^{m,n}=\frac{(-n+l)_j(m-n+l)_j}{(m+1)_j j!}, \quad 0 \leq j \leq n-l,
\end{equation} 
for $0 \leq l \leq n$. 

Suppose that
\begin{equation}
\label{linindcondCdecomplemma}
0=c_0\mathit{O}_{0}^{m,n}(|z|^2)+c_1\mathit{O}_{1}^{m,n}(|z|^2)+\ldots+c_{n}\mathit{O}_{n}^{m,n}(|z|^2),
\end{equation}
for some $c_j \in K$. We then see from \eqref{ApolynomialsCdecomplemma} and \eqref{Cdecomplemmahyper} that the first term of this last expression can be written in the form of
\begin{equation*}
\mathit{O}_{0}^{m,n}(|z|^2)=\vartheta_{0,0}^{m,n}+\vartheta_{0,1}^{m,n}|z|^2+\ldots+\vartheta_{0,n}^{m,n}|z|^{2n}.
\end{equation*}
Note that 
\begin{equation}
\label{identitybasisCdecomplemma}
|z|^{2k}=\sum_{i=0}^{k} \binom{k}{i} (-1)^i(1-|z|^2)^i=1+R_k(1-|z|^2), \quad k \geq 0,
\end{equation}
for some polynomial $R_k(x) \in K[x]$ such that $R_k(0)=0$. Note further that each term $\mathit{O}_{l}^{m,n}(|z|^2)=(1-|z|^2)^l\mathit{o}^{m,n}_l(|z|^2)$ to the right of the equality in \eqref{linindcondCdecomplemma} apart from the first contains a factor $(1-|z|^2)^l$ to an order no less than $l > 0$. This allows us to express \eqref{linindcondCdecomplemma} in the form of
\begin{equation*}
0=c_0(\vartheta_{0,0}^{m,n}+\vartheta_{0,1}^{m,n}+\ldots+\vartheta_{0,n}^{m,n})+R(1-|z|^2),
\end{equation*}
for some polynomial $R(x) \in K[x]$ such that $R(0)=0$. Unless $c_0 = 0$, linear independence entails that
\begin{equation*}
\vartheta_{0,0}^{m,n}+\vartheta_{0,1}^{m,n}+\ldots+\vartheta_{0,n}^{m,n}=0.
\end{equation*}  
But the left hand sum of this last expression is given by the Chu-Vandermonde identity in \eqref{vandermonde}. In comparison with \eqref{vandermonde} and \eqref{hyperfunctionCdecomplemma}, we see that 
\begin{equation*}
\vartheta_{0,0}^{m,n}+\vartheta_{0,1}^{m,n}+\ldots+\vartheta_{0,n}^{m,n}=\frac{(n+1)_{n}}{(m+1)_{n}}.
\end{equation*}
The right hand side of this expression is different from zero however, and so $c_0=0$.  

We proceed inductively. Suppose that 
\begin{equation*}
c_0=c_1=\ldots=c_{l-1}=0. 
\end{equation*}
The expression in \eqref{linindcondCdecomplemma} can then be written as
\begin{equation*}
0=c_l\mathit{O}_{l}^{m,n}(|z|^2)+c_{l+1}\mathit{O}_{l+1}^{m,n}(|z|^2)+\ldots+c_{n}\mathit{O}_{n}^{m,n}(|z|^2).
\end{equation*}
If we now expand the first term and employ the identity that was given in \eqref{identitybasisCdecomplemma} in a similar way to that of the previous paragraph, then
\begin{align*}
0=c_l\big(\vartheta_{l,0}^{m,n}+\vartheta_{l,1}^{m,n}+\ldots+\vartheta_{l,n-l}^{m,n}\big).
\end{align*} 
Another application of the Vandermonde identity gives that  
\begin{equation*}
\vartheta_{l,0}^{m,n}+\vartheta_{l,1}^{m,n}+\ldots+\vartheta_{l,n-l}^{m,n}=\frac{(n+1-l)_{n-l}}{(m+1)_{n-l}} \neq 0.
\end{equation*}
Hence $c_l=0$, and we may conclude by induction. 
\end{proof}
\begin{prop}
\label{Olofssonbasis}
Let $n \in \N$. Then the polynomials 
\begin{equation*}
\mathit{O}_{0}^{m,n}(|z|^2),\mathit{O}_{1}^{m,n}(|z|^2),\ldots,\mathit{O}_{n}^{m,n}(|z|^2) \in K[|z|^2],
\end{equation*}
form a $K$-basis for the $(n+1)$-dimensional subspace of all polynomials in $K[|z|^2]$ of degree at most $n$ for all $m \in \N$.  
\end{prop}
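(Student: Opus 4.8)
The plan is to deduce the statement from Lemma~\ref{Cdecomplemma} together with a dimension count, so very little extra work is needed. First I would check that each of the $n+1$ polynomials lands in the target space. By \eqref{Cdecomplemmahyper}, the hypergeometric polynomial $\mathit{o}^{m,n}_l(|z|^2)=F(-n+l,m-n+l,m+1;|z|^2)$ is a polynomial in $|z|^2$ of degree at most $n-l$ (the first parameter $-n+l=-(n-l)$ being a non-positive integer causes the series to truncate, and $(m+1)_j\neq 0$ for $m\geq 0$), so $\mathit{O}^{m,n}_l(|z|^2)=(1-|z|^2)^l\mathit{o}^{m,n}_l(|z|^2)$ has degree at most $l+(n-l)=n$ in $|z|^2$. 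Hence all of $\mathit{O}^{m,n}_0,\dots,\mathit{O}^{m,n}_n$ lie in the subspace $P_n=\textnormal{span}_K\{(1-|z|^2)^j:0\leq j\leq n\}=\textnormal{span}_K\{|z|^{2j}:0\leq j\leq n\}$ of $K[|z|^2]$, which is $(n+1)$-dimensional over $K$ (the powers $(1-|z|^2)^j$ being linearly independent by the remark following \eqref{binom}).

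Next I would invoke Lemma~\ref{Cdecomplemma}, whose proof shows precisely that any relation $c_0\mathit{O}^{m,n}_0(|z|^2)+\dots+c_n\mathit{O}^{m,n}_n(|z|^2)=0$ with $c_j\in K$ forces $c_0=\dots=c_n=0$; that is, the $n+1$ polynomials $\mathit{O}^{m,n}_0,\dots,\mathit{O}^{m,n}_n$ are linearly independent over $K$. A linearly independent family of $n+1$ vectors inside the $(n+1)$-dimensional space $P_n$ is automatically a spanning set, hence a basis. Therefore, for every $m\in\N$, the polynomials $\mathit{O}^{m,n}_0(|z|^2),\dots,\mathit{O}^{m,n}_n(|z|^2)$ form a $K$-basis of $P_n$, i.e. of the space of all polynomials in $K[|z|^2]$ of degree at most $n$, which is the assertion.

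There is essentially no obstacle here: the only two points needing a word of care are the degree estimate $\deg_{|z|^2}\mathit{O}^{m,n}_l\leq n$ and the fact that $P_n$ is genuinely $(n+1)$-dimensional, so that linear independence of $n+1$ elements upgrades for free to the basis property; both are routine. (If one preferred a self-contained proof not citing the lemma, one could instead rerun the same triangular elimination, expanding $\mathit{O}^{m,n}_0$ via \eqref{identitybasisCdecomplemma} and using the Chu--Vandermonde evaluation \eqref{vandermonde} to peel off coefficients one at a time; but reusing Lemma~\ref{Cdecomplemma} is cleaner.)
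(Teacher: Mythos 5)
Your proposal is correct and follows essentially the same route as the paper: a degree bound showing each $\mathit{O}^{m,n}_l$ lies in the $(n+1)$-dimensional space of polynomials of degree at most $n$, combined with the linear independence established in (the proof of) Lemma~\ref{Cdecomplemma} and a dimension count. The only cosmetic difference is that you prove $\deg \mathit{O}^{m,n}_l \leq n$ where the paper asserts degree exactly $n$; your weaker bound suffices, and you are also right to read Lemma~\ref{Cdecomplemma} as establishing linear \emph{independence} despite the word ``dependent'' in its statement.
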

\begin{proof}
Note that each of the polynomials $\mathit{O}_{0}^{m,n}(x),\ldots,\mathit{O}_{n}^{m,n}(x)$ are of degree $n$. Since there are $n+1$ such linearly independent polynomials and since $1,x,x^2,\ldots,x^{n}$ form a $K$-basis for the $(n+1)$-dimensional subspace of polynomials in $K[x] \cong K[|z|^2]$ of degree at most $n$, we may now conclude.    
\end{proof}

The significance of these last two results rests in the possibility of altering bases. While the current restriction to finite sums is comparatively forgiving, we will need to have some control over the transformation coefficients that result from such transformations later on. This involves a few technical details or arguments that can be dealt with either now or later. Since they are very similar in nature to those that were given in the proof of Lemma \ref{Cdecomplemma}, we have chosen to include them at this point rather than later, and leave any closer inspection at choice for now. We have also chosen to be rather direct in this approach, and to not rely on too many antecedents. 

\begin{lemma}
\label{basistransformlemmazerocoeff}
Let $l,m,n \in \N$ be such that $0 \leq l \leq n$ and write
\begin{equation}
\label{basistransformcoefftechlemma}
(1-|z|^2)^l=t_{l,0}^{m,n}\mathit{O}_{0}^{m,n}(|z|^2)+t_{l,1}^{m,n}\mathit{O}_{1}^{m,n}(|z|^2)+\ldots+t_{l,n}^{m,n}\mathit{O}_{n}^{m,n}(|z|^2),
\end{equation}
for some $t_{l,j}^{m,n} \in K$. Then $t_{l,j}^{m,n}=0$ for $0 \leq j < l$. 
\end{lemma}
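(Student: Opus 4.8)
The plan is to exploit the fact that, by Proposition~\ref{Olofssonbasis}, the polynomials $\mathit{O}_{0}^{m,n},\ldots,\mathit{O}_{n}^{m,n}$ form a $K$-basis for the space $P_n$ of polynomials in $K[|z|^2]$ of degree at most $n$, so the expansion \eqref{basistransformcoefftechlemma} exists and is unique; the content of the lemma is that the coefficients $t_{l,j}^{m,n}$ with index $j<l$ all vanish. First I would look at the low-order behaviour of both sides at $|z|^2=1$, i.e.\ expand everything in powers of $(1-|z|^2)$. The left-hand side $(1-|z|^2)^l$ has vanishing coefficients for all powers $(1-|z|^2)^i$ with $i<l$. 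On the right-hand side, recall from \eqref{ApolynomialsCdecomplemma} that $\mathit{O}_{j}^{m,n}(|z|^2)=(1-|z|^2)^j\mathit{o}^{m,n}_j(|z|^2)$, and $\mathit{o}^{m,n}_j$ is a hypergeometric polynomial in $|z|^2$; using the identity \eqref{identitybasisCdecomplemma}, $|z|^{2k}=1+R_k(1-|z|^2)$ with $R_k(0)=0$, one sees that $\mathit{o}^{m,n}_j(|z|^2)=\mathit{o}^{m,n}_j(1)+(\text{terms divisible by }1-|z|^2)$, so each $\mathit{O}_{j}^{m,n}$ contributes a term $\mathit{o}^{m,n}_j(1)\,(1-|z|^2)^j$ at lowest order, plus higher-order terms in $(1-|z|^2)$.

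The argument then runs by induction on $i$, extracting the coefficient of $(1-|z|^2)^i$ for $i=0,1,\ldots,l-1$ in turn. For $i=0$: only $\mathit{O}_{0}^{m,n}$ has a nonzero constant term, equal to $\mathit{o}^{m,n}_0(1)=F(-n,m-n,m+1;1)$, which by the Chu--Vandermonde identity \eqref{vandermonde} equals $(n+1)_n/(m+1)_n\neq 0$; matching against the constant term $0$ of $(1-|z|^2)^l$ (here $l>0$) gives $t_{l,0}^{m,n}=0$. Proceeding inductively, suppose $t_{l,0}^{m,n}=\cdots=t_{l,i-1}^{m,n}=0$ for some $i<l$; then the coefficient of $(1-|z|^2)^i$ on the right-hand side receives a contribution only from the terms $\mathit{O}_{j}^{m,n}$ with $j\le i$, and by the inductive hypothesis only $\mathit{O}_{i}^{m,n}$ survives, contributing $t_{l,i}^{m,n}\,\mathit{o}^{m,n}_i(1)$. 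Since $\mathit{o}^{m,n}_i(1)=F(-n+i,m-n+i,m+1;1)=(n+1-i)_{n-i}/(m+1)_{n-i}\neq 0$ by Chu--Vandermonde again (exactly as computed in the proof of Lemma~\ref{Cdecomplemma}), and since the coefficient of $(1-|z|^2)^i$ on the left is $0$ because $i<l$, we conclude $t_{l,i}^{m,n}=0$. This closes the induction and yields $t_{l,j}^{m,n}=0$ for $0\le j<l$.

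The one point that needs a little care — and which I'd regard as the main obstacle, though a mild one — is making precise the claim that when extracting the coefficient of $(1-|z|^2)^i$ on the right-hand side, the only basis element that can contribute a term at order exactly $i$ (given the vanishing of the earlier coefficients) is $\mathit{O}_{i}^{m,n}$: the elements $\mathit{O}_{j}^{m,n}$ with $j<i$ are killed by the inductive hypothesis, while those with $j>i$ are divisible by $(1-|z|^2)^j$ with $j>i$ and so contribute nothing at order $i$. This is exactly the bookkeeping device already used in the proof of Lemma~\ref{Cdecomplemma} via \eqref{identitybasisCdecomplemma}, so it transfers verbatim; the only new ingredient is carrying an inhomogeneous left-hand side $(1-|z|^2)^l$ along, whose expansion in powers of $(1-|z|^2)$ is trivial. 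One should also note that the nonvanishing of $\mathit{o}^{m,n}_i(1)$ requires $m\in\N$, which is part of the hypothesis.
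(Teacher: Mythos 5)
Your proposal is correct and follows essentially the same route as the paper's proof: both expand each $\mathit{O}_{j}^{m,n}=(1-|z|^2)^j\mathit{o}_j^{m,n}$ in powers of $(1-|z|^2)$ via \eqref{identitybasisCdecomplemma}, evaluate the lowest-order coefficient $\mathit{o}_j^{m,n}(1)=(n+1-j)_{n-j}/(m+1)_{n-j}\neq 0$ by Chu--Vandermonde, and induct on the order $i<l$ to kill the coefficients one at a time. The only difference is cosmetic: you make the bookkeeping (why only $\mathit{O}_i^{m,n}$ contributes at order $i$) slightly more explicit than the paper does.
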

\begin{proof}
We shall consult the proof procedure that was given in Lemma \ref{Cdecomplemma}. 

Recall the quotient of Pochhammer symbols $\vartheta_{k,j}^{m,n}$ in \eqref{Cdecomplemmacoefficients} of Lemma \ref{Cdecomplemma},
\begin{equation}
\label{Ccoefficientslemma}
\vartheta_{k,j}^{m,n}=\frac{(-n+k)_j(m-n+k)_j}{(m+1)_j j!}, \quad 0 \leq j \leq n-k,
\end{equation}
for $0 \leq k \leq n$.

They were the coefficients of the hypergeometric function $\mathit{o}_k^{m,n}\in K^{\star}$ in \eqref{hyperfunctionCdecomplemma}, that expresses 
\begin{equation}
\label{bigOsmallotransfcoefflemma}
\mathit{O}_{k}^{m,n}(|z|^2)=(1-|z|^2)^k\mathit{o}_k^{m,n}(|z|^2), \quad 0 \leq k \leq n. 
\end{equation}
In such terms,
\begin{equation}
\label{ObasisinCcoeff}
O_k^{m,n}(|z|^2)=(1-|z|^2)^k\big(\vartheta_{k,0}^{m,n}+\vartheta_{k,1}^{m,n}|z|^2+\ldots+\vartheta_{k,n-k}^{m,n}|z|^{2(n-k)}\big),
\end{equation}
for $0 \leq k \leq n$. Via a similar transformation to that in \eqref{identitybasisCdecomplemma}, we can then write
\begin{equation}
\label{ObasisinCcoeffsimplified}
O_k^{m,n}(|z|^2)=(1-|z|^2)^k\big(\vartheta_{k,0}^{m,n}+\vartheta_{k,1}^{m,n}+\ldots+\vartheta_{k,n-k}^{m,n}\big)+R(1-|z|^2),
\end{equation}
for some $R(x) \in K[x]$ whose smallest degree term exceeds $k$. Linear independence and the identity of Chu-Vandermonde then entails that 
\begin{equation*}
0=t_{l,0}^{m,n}(\vartheta_{0,0}^{m,n}+\vartheta_{0,1}^{m,n}+\vartheta_{0,2}^{m,n}+\ldots+\vartheta_{0,n}^{m,n})=t_{l,0}^{m,n}\frac{(n+1)_{n}}{(m+1)_{n}}, \quad l > 0.
\end{equation*} 
This shows that $t_{l,0}^{m,n}=0$ whenever $l > 0$. If $t_{l,0}^{m,n}=\ldots=t_{l,k-1}^{m,n}=0$ for $0 \leq k < l$, this same expression \eqref{ObasisinCcoeffsimplified} implies that 
\begin{equation*}
0=t_{l,k}^{m,n}(\vartheta_{k,0}^{m,n}+\vartheta_{k,1}^{m,n}+\ldots+\vartheta_{k,n-k}^{m,n})(1-|z|^2)^k, \quad 0 \leq k < l. 
\end{equation*}
Another application of the Vandermonde identity then shows that $t_{l,k}^{m,n}=0$ for $0 \leq k < l$. This completes the proof by induction. 
\end{proof}

\begin{lemma}
\label{basistransformlemmanonzerocoeff}
Let $l,m,n \in \N$ be such that $0 \leq l \leq n$ and let $t_{l,k}^{m,n} \in K$ be as in \eqref{basistransformcoefftechlemma} for $0 \leq k \leq n$. Set
\begin{equation}
\label{sumsrationalcoefflemma}
S_{j,k}^{m,n}=(-1)^{k-j} \sum_{i=0}^{n-k} \binom{k-j+i}{k-j}\vartheta_{j,k-j+i}^{m,n}, \quad 0 \leq j \leq k,
\end{equation}
where 
\begin{equation}
\label{Ccoefficientslemma}
\vartheta_{k,j}^{m,n}=\frac{(-n+k)_j(m-n+k)_j}{(m+1)_j j!}, \quad 0 \leq j \leq n-k,
\end{equation}
for $0 \leq k \leq n$. Then 
\begin{equation}
\label{rationalcoeffnonzerolemma}
t_{l,k}^{m,n}=-\frac{(m+1)_{n-k}}{(n+1-k)_{n-k}}\Lambda_{l,k}^{m,n},
\end{equation}
where $\Lambda_{l,k}^{m,n}=-1$ for $k=l$ and $\Lambda_{l,k}^{m,n}=t_{l,k-1}^{m,n}S_{k-1,k}^{m,n}+\ldots+t_{l,l}^{m,n}S_{l,k}^{m,n}$ for $l < k \leq n$.
\end{lemma}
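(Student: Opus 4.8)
The plan is to invert the triangular change of basis underlying Lemma \ref{Cdecomplemma}. First I would express each $\mathit{O}_k^{m,n}(|z|^2)$ back in the basis $\{(1-|z|^2)^p\}_{p=0}^{n}$ of the space of polynomials in $K^{\star}$ of degree at most $n$, identifying the resulting expansion coefficients with the quantities $S_{j,k}^{m,n}$ of \eqref{sumsrationalcoefflemma}; then I would solve the induced linear system for the $t_{l,k}^{m,n}$ by back substitution, using Lemma \ref{basistransformlemmazerocoeff} to cut the system down to its triangular part.

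For the first part, using the identity $|z|^{2t}=\sum_{i=0}^{t}\binom{t}{i}(-1)^i(1-|z|^2)^i$ from \eqref{identitybasisCdecomplemma} together with the expansion $\mathit{o}_j^{m,n}(|z|^2)=\sum_{t=0}^{n-j}\vartheta_{j,t}^{m,n}|z|^{2t}$ of \eqref{Cdecomplemmahyper}, I would interchange the order of summation to obtain
\[
\mathit{o}_j^{m,n}(|z|^2)=\sum_{p=0}^{n-j}(-1)^p\Big(\sum_{t=p}^{n-j}\binom{t}{p}\vartheta_{j,t}^{m,n}\Big)(1-|z|^2)^p,
\]
and then, re-indexing the inner sum by $t=p+i$ and comparing with \eqref{sumsrationalcoefflemma}, observe that the coefficient of $(1-|z|^2)^p$ is exactly $S_{j,j+p}^{m,n}$. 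This yields
\[
\mathit{O}_k^{m,n}(|z|^2)=(1-|z|^2)^k\mathit{o}_k^{m,n}(|z|^2)=\sum_{p=k}^{n}S_{k,p}^{m,n}(1-|z|^2)^p,\qquad 0\le k\le n,
\]
and in particular the diagonal coefficient $S_{k,k}^{m,n}=\sum_{t=0}^{n-k}\vartheta_{k,t}^{m,n}$ is the Chu--Vandermonde sum already evaluated in the proof of Lemma \ref{Cdecomplemma}, so $S_{k,k}^{m,n}=(n+1-k)_{n-k}/(m+1)_{n-k}\neq 0$ for $m,n\in\N$.

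For the second part, I would substitute the previous display into \eqref{basistransformcoefftechlemma} and invoke Lemma \ref{basistransformlemmazerocoeff} to discard the terms with $k<l$, obtaining
\[
(1-|z|^2)^l=\sum_{k=l}^{n}t_{l,k}^{m,n}\sum_{p=k}^{n}S_{k,p}^{m,n}(1-|z|^2)^p=\sum_{p=l}^{n}\Big(\sum_{k=l}^{p}t_{l,k}^{m,n}S_{k,p}^{m,n}\Big)(1-|z|^2)^p.
\]
Comparing coefficients against the linearly independent powers $(1-|z|^2)^p$ of \eqref{binom} gives $\sum_{k=l}^{p}t_{l,k}^{m,n}S_{k,p}^{m,n}=\delta_{l,p}$ for $l\le p\le n$; isolating the top term and dividing by $S_{p,p}^{m,n}$ then yields $t_{l,p}^{m,n}=-\frac{(m+1)_{n-p}}{(n+1-p)_{n-p}}\Lambda_{l,p}^{m,n}$, where $\Lambda_{l,l}^{m,n}=-1$ and $\Lambda_{l,p}^{m,n}=\sum_{k=l}^{p-1}t_{l,k}^{m,n}S_{k,p}^{m,n}=t_{l,p-1}^{m,n}S_{p-1,p}^{m,n}+\ldots+t_{l,l}^{m,n}S_{l,p}^{m,n}$ for $l<p\le n$. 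This is precisely \eqref{rationalcoeffnonzerolemma}, and the recursion closes since $\Lambda_{l,p}^{m,n}$ only involves the $t_{l,k}^{m,n}$ with $l\le k<p$.

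The hard part will be the index bookkeeping in the first step: I need to check that swapping the order of summation in the double sum arising from $|z|^{2t}=\sum_i\binom{t}{i}(-1)^i(1-|z|^2)^i$ and relabelling reproduces \emph{exactly} the binomial coefficient $\binom{k-j+i}{k-j}$, the sign $(-1)^{k-j}$, and the summation range $0\le i\le n-k$ in the definition \eqref{sumsrationalcoefflemma} of $S_{j,k}^{m,n}$. Once that matching is pinned down, the remainder is routine back substitution in a triangular system, with the nonvanishing of the diagonal entries supplied by the Chu--Vandermonde evaluation already carried out in the proof of Lemma \ref{Cdecomplemma}.
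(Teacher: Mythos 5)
Your proposal is correct and follows essentially the same route as the paper: expand each $\mathit{O}_j^{m,n}$ in powers of $(1-|z|^2)$ via \eqref{identitybasisCdecomplemma}, identify the resulting coefficients with the $S_{j,k}^{m,n}$, evaluate the diagonal entries by Chu--Vandermonde, and back-substitute in the triangular system cut down by Lemma \ref{basistransformlemmazerocoeff}. Your index matching for $S_{j,k}^{m,n}$ (re-indexing $t=k-j+i$) checks out, and writing the system explicitly as $\sum_{k=l}^{p}t_{l,k}^{m,n}S_{k,p}^{m,n}=\delta_{l,p}$ is merely a tidier presentation of the paper's coefficient-by-coefficient extraction.
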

\begin{proof}
By the preceding Lemma \ref{basistransformlemmazerocoeff}, we have that $t_{l,j}^{m,n}=0$ for $0 \leq j <l$. It thus remains to compute $t_{l,j}^{m,n}$ for $l \leq j \leq n$. Note from \eqref{basistransformcoefftechlemma} that all the remaining terms except the $l$:th term carry a factor $(1-|z|^2)^j$ of degree $j > l$. Hence, 
\begin{align*}
(1-|z|^2)^l &=t_{l,l}^{m,n}(\vartheta_{l,0}^{m,n}+\vartheta_{l,1}^{m,n}+\ldots+\vartheta_{l,n-l}^{m,n})(1-|z|^2)^l.
\end{align*} 
Another application of the Vandermonde identity then shows that 
\begin{equation*}
t_{l,l}^{m,n}=\frac{(m+1)_{n-l}}{(n+1-l)_{n-l}}.
\end{equation*}
This gives the first part of the statement. As for $k > l$, note that the lowest order term to which $t_{l,k}^{m,n}$ appears in \eqref{basistransformcoefftechlemma} following the transformation in \eqref{identitybasisCdecomplemma} is
\begin{align}
\label{computationcoeffsurvivingterms}
t_{l,k}^{m,n}(\vartheta_{k,0}^{m,n}+\ldots+\vartheta_{k,n-k}^{m,n})(1-|z|^2)^k=t_{l,k}^{m,n}\frac{(n+1-k)_{n-k}}{(m+1)_{n-k}}(1-|z|^2)^k.
\end{align} 
Since $k > l$, this term must equate to zero together with the rest of terms in \eqref{basistransformcoefftechlemma} for which there appears a factor of $(1-|z|^2)^k$. By above, the only contributing terms in \eqref{basistransformcoefftechlemma} are those terms $O_j^{m,n}$ for which $l \leq j \leq k$. This since $t_{l,j}^{m,n}=0$ for $j < l$ by the preceding lemma, and no term $O_j^{m,n}$ for which $j > k$ carry a factor of $(1-|z|^2)^j$ of degree less than or equal to $k$. We shall express the hypergeometric functions $\mathit{o}_j^{m,n} \in K^{\star}$ \eqref{hyperfunctionCdecomplemma} in the form of
\begin{equation}
\label{hypercoefflemma}
F(-n+j,m-n+j,m+1;|z|^2)=\sum_{p=0}^{n-j} \sum_{q=0}^{p} \vartheta_{j,p}^{m,n}\binom{p}{q}(-1)^q(1-|z|^2)^q,
\end{equation}
for $0 \leq j \leq n$. If we now multiply this last expression by a factor of $(1-|z|^2)^j$, we see that the coefficient of the $k$:th degree term in $1-|z|^2$ that appears in the resulting expression is 
\begin{equation*}
S_{j,k}^{m,n}:=(-1)^{k-j} \sum_{i=0}^{n-k} \binom{k-j+i}{k-j}\vartheta_{j,k-j+i}^{m,n},
\end{equation*}
for $0 \leq j \leq k$. Since we have already established that $t_{l,j}^{m,n}=0$ for $0 \leq j < l$, we can restrict to $l \leq j \leq k$. We also see from this expression that there are no terms of degree less than or equal to $k$ when $j > k$, as already noticed. If we now recall \eqref{computationcoeffsurvivingterms} and equate the two sides in \eqref{basistransformcoefftechlemma}, we see that,
\begin{equation*}
0=\bigg(t_{l,k}^{m,n}\frac{(n+1-k)_{n-k}}{(m+1)_{n-k}}+t_{l,k-1}^{m,n}S_{k-1,k}^m+\ldots+t_{l,l}^{m,n}S_{l,k}^{m,n}\bigg)(1-|z|^2)^k,
\end{equation*}
for $l< k \leq n$. The desired result then follows by a rearrangement of terms.
\end{proof}

\begin{prop}
\label{rationalfunctionscoeff}
Let $l,m,n \in \N$ be such that $0 \leq l \leq n$ and let $t_{l,k}^{m,n} \in K$ be as in \eqref{basistransformcoefftechlemma} for $0 \leq k \leq n$. Then there is a rational expression $t_{l,k}^n(x) \in S_{\N}^{-1}K[x]$ such that $t_{l,k}^n(m)=t_{l,k}^{m,n}$ for $m \geq 0$ and $n \geq k \geq 0$. 
\end{prop}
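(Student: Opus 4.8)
The plan is to read off explicit rational expressions directly from the recursion for the numbers $t_{l,k}^{m,n}$ provided by Lemma~\ref{basistransformlemmanonzerocoeff}, and to check that nothing leaves the ring $S_{\N}^{-1}K[x]$. The underlying elementary fact is that, for each fixed $m \geq 0$, the assignment $p(x)/q(x) \mapsto p(m)/q(m)$ (with $q \in S_{\N}$) is a well-defined $K$-algebra homomorphism $S_{\N}^{-1}K[x] \to K$, precisely because $q(m) \neq 0$ by the definition of $S_{\N}$ in \eqref{localization}. Hence it suffices to produce, for each admissible triple $(l,k,n)$, an element $t_{l,k}^n(x) \in S_{\N}^{-1}K[x]$ whose image under this homomorphism is $t_{l,k}^{m,n}$ for every $m \geq 0$.

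First I would lift the building blocks to $S_{\N}^{-1}K[x]$. For $0 \leq j \leq n$ and $p \geq 0$ set
\[
\vartheta_{j,p}^{n}(x)=\frac{(-n+j)_p\,(x-n+j)_p}{(x+1)_p\,p!},
\]
whose numerator is a polynomial in $x$ and whose denominator $(x+1)_p\,p! = p!\prod_{a=1}^{p}(x+a)$ has all of its roots among the negative integers, so that the denominator lies in $S_{\N}$ and $\vartheta_{j,p}^{n}(x) \in S_{\N}^{-1}K[x]$ with $\vartheta_{j,p}^{n}(m)=\vartheta_{j,p}^{m,n}$ for all $m \geq 0$. Since $S_{\N}^{-1}K[x]$ is closed under finite $K$-linear combinations, the quantity $S_{j,k}^{m,n}$ of \eqref{sumsrationalcoefflemma} is the value at $x=m$ of
\[
S_{j,k}^{n}(x)=(-1)^{k-j}\sum_{i=0}^{n-k}\binom{k-j+i}{k-j}\,\vartheta_{j,k-j+i}^{n}(x) \in S_{\N}^{-1}K[x].
\]
Likewise, for $0 \leq k \leq n$ the factor $(x+1)_{n-k}/(n+1-k)_{n-k}$ is a polynomial in $x$ scaled by the nonzero constant $(n+1-k)_{n-k}=(n+1-k)(n+2-k)\cdots n$ (nonzero since $K$ has characteristic zero), so it too belongs to $S_{\N}^{-1}K[x]$ and takes the value $(m+1)_{n-k}/(n+1-k)_{n-k}$ at $x=m$.

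With these in place I would define the $t_{l,k}^n(x)$ so as to match the recursion of Lemma~\ref{basistransformlemmanonzerocoeff} term by term. For $0 \leq k < l$ put $t_{l,k}^n(x)=0$, which matches $t_{l,k}^{m,n}=0$ from Lemma~\ref{basistransformlemmazerocoeff}. As base case, put
\[
t_{l,l}^n(x)=\frac{(x+1)_{n-l}}{(n+1-l)_{n-l}},
\]
so $t_{l,l}^n(m)=t_{l,l}^{m,n}$. Then, inducting on $k$ from $l$ up to $n$, suppose $t_{l,j}^n(x) \in S_{\N}^{-1}K[x]$ has been built with $t_{l,j}^n(m)=t_{l,j}^{m,n}$ for $l \leq j < k$, and set
\[
\Lambda_{l,k}^n(x)=\sum_{j=l}^{k-1}t_{l,j}^n(x)\,S_{j,k}^{n}(x), \qquad
t_{l,k}^n(x)=-\frac{(x+1)_{n-k}}{(n+1-k)_{n-k}}\,\Lambda_{l,k}^n(x);
\]
both lie in $S_{\N}^{-1}K[x]$ because that ring is closed under sums and products. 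Evaluating at $x=m$ and appealing to \eqref{rationalcoeffnonzerolemma} together with the definition of $\Lambda_{l,k}^{m,n}$ gives $t_{l,k}^n(m)=t_{l,k}^{m,n}$, which closes the induction.

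I do not anticipate a genuine obstacle; the substance of the argument is bookkeeping. The only point that needs care is verifying that every denominator occurring along the way — the shifted factorials $(x+1)_p$ coming from the hypergeometric coefficients, and the scalars $(n+1-k)_{n-k}$ — is invertible in $S_{\N}^{-1}K[x]$ (no nonnegative integer root, respectively a nonzero element of $K$), so that the evaluation map at each $m \geq 0$ really is a $K$-algebra homomorphism and the recursion for the rational expressions genuinely mirrors the one for the numbers $t_{l,k}^{m,n}$ from Lemmas~\ref{basistransformlemmazerocoeff} and~\ref{basistransformlemmanonzerocoeff}.
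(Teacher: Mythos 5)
Your proposal is correct and follows essentially the same route as the paper: both lift the quotients of Pochhammer symbols $\vartheta_{j,p}^{m,n}$ and the sums $S_{j,k}^{m,n}$ to elements of $S_{\N}^{-1}K[x]$ (the denominators $(x+1)_p$ having only negative integer roots), define $t_{l,l}^n(x)=(x+1)_{n-l}/(n+1-l)_{n-l}$ as base case, and propagate the recursion of Lemma \ref{basistransformlemmanonzerocoeff} inside $S_{\N}^{-1}K[x]$ by induction on $k$. Your extra remark that evaluation at $x=m$ is a $K$-algebra homomorphism on $S_{\N}^{-1}K[x]$ is a welcome clarification but not a departure from the paper's argument.
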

\begin{proof}
We saw in Lemma \ref{basistransformlemmazerocoeff} that $t_{l,k}^{m,n}=0$ for $0 \leq k < l$. 

As for $l \leq k \leq n$, note from \eqref{sumsrationalcoefflemma} and \eqref{Ccoefficientslemma} that $S_{j,k}^{m,n}$ is a well defined expression for all $m \geq 0$. We also recall that $S_{\N}^{-1}K[x]$ is the localization of $K[x]$ by $S_{\N}$ and set
\begin{equation*}
S_{j,k}^n(x)=(-1)^{k-j} \sum_{i=0}^{n-k} \binom{k-j+i}{k-j}\vartheta_{j,k-j+i}^{n}(x) \in S_{\N}^{-1}K[x],
\end{equation*}
for $0 \leq j \leq k$, where 
\begin{equation*}
\vartheta_{k,j}^{n}(x)=\frac{(-n+k)_j(x-n+k)_j}{(x+1)_j j!} \in S_{\N}^{-1}K[x], 
\end{equation*}
for $0 \leq j \leq n-k$. Let 
\begin{equation*}
t_{l,l}^n(x)=\frac{(x+1)_{n-l}}{(n+1-l)_{n-l}} \in S_{\N}^{-1}K[x],
\end{equation*}
and recursively define
\begin{equation*}
t_{l,k}^n(x)=-\frac{(x+1)_{n-k}}{(n+1-k)_{n-k}}\Lambda_{l,k}^n(x) \in S_{\N}^{-1}K[x],
\end{equation*}
where 
\begin{equation*}
\Lambda_{l,k}^n(x)=t_{l,k-1}^n(x)S_{k-1,k}^m(x)+\ldots+t_{l,l}^{n}(x)S_{l,k}^n(x) \in S_{\N}^{-1}K[x],
\end{equation*}
for $l < k \leq n$. It is then evident from \eqref{rationalcoeffnonzerolemma} that $t_{l,k}^{m,n}$ is the image of $\N$ under $t_{l,k}^n(x)$ for $l \leq k \leq n$, such that $t_{l,k}^n(m)=t_{l,k}^{m,n}$ for $m \geq 0$. 
\end{proof}

\begin{cor}
Let $l,m,n \in \N$ be such that $0 \leq l \leq n$ and let $t_{l,k}^{m,n} \in K$ be as in \eqref{basistransformcoefftechlemma} for $0 \leq k \leq n$. Then $t_{l,k}^{m,n} \in \Q$ is rational for $0 \leq k \leq n$.
\end{cor}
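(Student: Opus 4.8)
The plan is to trace the field of definition of the coefficients $t_{l,k}^{m,n}$ through the explicit constructions already carried out. First I would invoke Lemma \ref{basistransformlemmazerocoeff} to dispose of the range $0\leq k<l$, where $t_{l,k}^{m,n}=0$ and there is nothing to prove. For the remaining range $l\leq k\leq n$, the most direct route is to appeal to Proposition \ref{rationalfunctionscoeff}, which exhibits $t_{l,k}^{m,n}$ as the value $t_{l,k}^n(m)$ of a single element $t_{l,k}^n(x)$ of $S_{\N}^{-1}K[x]$, built recursively from the rational expressions $\vartheta_{k,j}^{n}(x)=\frac{(-n+k)_j(x-n+k)_j}{(x+1)_j j!}$, from $t_{l,l}^n(x)=\frac{(x+1)_{n-l}}{(n+1-l)_{n-l}}$, and from the quantities $S_{j,k}^n(x)$ assembled out of the $\vartheta$'s together with binomial coefficients and signs.

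The key observation is then that every ingredient of this construction already lies in the subring $S_{\N}^{-1}\Q[x]$ of $S_{\N}^{-1}K[x]$: the Pochhammer symbols $(-n+k)_j$ are products of integers, the polynomials $(x-n+k)_j$ and $(x+1)_j$ have integer coefficients, and $j!$, $\binom{k-j+i}{k-j}$, $(-1)^{k-j}$, $(n+1-k)_{n-k}$ are integers. Since $S_{\N}^{-1}\Q[x]$ is closed under the ring operations (just as was emphasized for $S_{\N}^{-1}K[x]$ in connection with \eqref{localization}), the recursively defined $t_{l,k}^n(x)$ again lies in $S_{\N}^{-1}\Q[x]$. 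Evaluating at a non-negative integer $m$, which is legitimate because the denominators belong to $S_{\N}$ and hence do not vanish at $m$, yields $t_{l,k}^{m,n}=t_{l,k}^n(m)\in\Q$.

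An alternative, slightly more conceptual argument bypasses Proposition \ref{rationalfunctionscoeff} entirely. Fix a non-negative integer $m$. Each $\mathit{O}_j^{m,n}(|z|^2)=(1-|z|^2)^j\mathit{o}_j^{m,n}(|z|^2)$ has rational coordinates when written in the monomial basis $1,|z|^2,\ldots,|z|^{2n}$ of the $(n+1)$-dimensional space $P_n$, since the coefficients of $\mathit{o}_j^{m,n}$ are the quotients of Pochhammer symbols $\vartheta_{j,p}^{m,n}$ evaluated at the integers $m,n,j,p$, and $(1-|z|^2)^j$ has integer coefficients. By Proposition \ref{Olofssonbasis} these $n+1$ polynomials form a $K$-basis of $P_n$, so the transition matrix from the monomial basis to the $\mathit{O}$-basis is an invertible matrix with entries in $\Q$; its determinant is a nonzero rational number, hence its inverse again has entries in $\Q$. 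Since $(1-|z|^2)^l$ has integer coordinates in the monomial basis, its coordinates $t_{l,0}^{m,n},\ldots,t_{l,n}^{m,n}$ in the $\mathit{O}$-basis are obtained by applying that rational inverse matrix, and are therefore rational.

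I do not anticipate a genuine obstacle: the whole content is bookkeeping that confirms no element of $K\setminus\Q$ is ever introduced. The one point meriting a word of care is the closure of $S_{\N}^{-1}\Q[x]$ under the recursion of Proposition \ref{rationalfunctionscoeff} in the first argument, and the fact that inverting an invertible rational matrix keeps the entries rational in the second argument — both standard, the latter for instance by Cramer's rule.
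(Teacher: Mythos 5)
Your first argument is exactly the paper's route: the paper's proof of this corollary consists of the single line that the statement is evident from Proposition \ref{rationalfunctionscoeff}, and your tracing of the recursion through $S_{\N}^{-1}\Q[x]$ supplies precisely the bookkeeping that the word ``evident'' leaves implicit (namely that $(-n+k)_j$, $(x-n+k)_j$, $(x+1)_j$, $j!$, the binomial coefficients and $(n+1-k)_{n-k}$ all have integer data, so nothing outside $\Q$ ever enters). Your second argument, via the rational transition matrix between the monomial basis of $P_n$ and the $\mathit{O}$-basis together with Cramer's rule, is a genuinely different and self-contained route that bypasses the recursive formulas of Lemma \ref{basistransformlemmanonzerocoeff} and Proposition \ref{rationalfunctionscoeff} entirely; it is cleaner for the rationality claim alone, though it gives up the uniform dependence on $m$ through a single element of $S_{\N}^{-1}K[x]$, which is what the paper actually needs later for the growth estimates in the proof of Theorem \ref{cellulardecomposition}. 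Both arguments are correct.
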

\begin{proof}
This is evident from the previous Proposition \ref{rationalfunctionscoeff}. 
\end{proof}

By Proposition \ref{linearindependencehomparts}, each $p(z,\bar{z}) \in K[z,\bar z]$ may be written in the form 
\begin{equation}
\label{mthcomponent}
p(z,\bar{z})=\sum_{m=-n}^{n} p_m \xi_m(z),
\end{equation}
for some polynomials $p_{-n},\ldots,p_{n}$ in $K^{\star}$, where we recall the use of the symbol $\xi$ to denote powers in $z$ and $\bar z$ from \eqref{xisymbol}. Let $\mathcal{S}$ be the collection of integers $m \in \Z$ in the interval $-n \leq m \leq n$, and recall the projection operators $\pi_m=\pi_m^{\mathcal{S}}$ from \eqref{projectionmaps}. For any integer $m \in\mathcal{S}$, we shall then refer to the $m$:th component of $p(z,\bar{z})$ as that polynomial $\pi_mp(z,\bar{z})$ obtained from $p(z,\bar{z})$ by projecting it onto the submodule $K^{\star}\xi_m(z)$. In the example just given \eqref{mthcomponent}, this polynomial is precisely $p_m\xi_m(z)$. We will sometimes also refer to the $m$:th component of the polynomial $p(z,\bar z)$ without referring to any explicit construction or operational procedure.    

\begin{thm}
\label{Cdecompthm}
Let $n \in \N$ be positive and let $m \in \Z$. Suppose that $p(z,\bar{z}) \in \mathcal{H}^n$ is a poly\-harmonic polynomial in the $K^{\star}$-module $\mathcal{H}$ of order $n$ and let $q_m(z,\bar{z})$ be its projection onto the $m$:th component. Then 
\begin{equation}
\label{Cellulardecompositionstatement}
q_m(z,\bar{z})=k_{m,0}\mathit{O}_{0}^{|m|,n-1}\xi_m(z)+\ldots+k_{m,n-1}\mathit{O}_{n-1}^{|m|,n-1}\xi_m(z),
\end{equation}
for some $k_{m,j} \in K$, where $O_j^{m,n-1} \in K^{\star}$ are the polynomials that were introduced in \eqref{ApolynomialsCdecomplemma}.  
\end{thm}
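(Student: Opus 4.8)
The plan is to combine the Almansi representation of Proposition~\ref{almansi} with the change-of-basis statement of Proposition~\ref{Olofssonbasis}, after first isolating the coefficient of $\xi_m(z)$. It suffices to treat the case $m\geq 0$, since $\mathit{O}_j^{|m|,n-1}$ depends on $m$ only through $|m|$, since conjugation carries $\mathcal{H}^n$ onto itself, and since taking the $m$-th component commutes with conjugation in the sense that $\overline{\pi_m p}$ is the $(-m)$-th component of $\bar p$; thus the case $m<0$ follows from the case $-m>0$ applied to $\bar p\in\mathcal{H}^n$. So assume $m\geq 0$, whence $\xi_m(z)=z^m$ and $|m|=m$.

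By Proposition~\ref{almansi} we write $p(z,\bar z)=\sum_{j=0}^{n-1}(1-|z|^2)^j q_j(z,\bar z)$ with each $q_j(z,\bar z)\in\mathcal{H}^1$. The $m$-th component is $q_m(z,\bar z)=\pi_m p(z,\bar z)$, where $\pi_m$ is the projection operator of \eqref{projectionmaps} built from the angular derivative $A=z\partial-\bar z\bar\partial$ (for a finite set $S$ of integers large enough to contain $m$ and all angular frequencies occurring in $p$). Since $A$, hence $\pi_m$, commutes with multiplication by any element of $K[|z|^2]$ — the remark preceding \eqref{angderactionhom} — we obtain
\begin{equation*}
q_m(z,\bar z)=\sum_{j=0}^{n-1}(1-|z|^2)^j\,\pi_m q_j(z,\bar z).
\end{equation*}
Each $q_j\in\mathcal{H}^1$ is a finite sum $\sum_k c_k z^k+c_{-k}\bar z^k$ of pure monomials, so $\pi_m q_j=c_j z^m$ for a scalar $c_j\in K$. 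Hence $q_m(z,\bar z)=p_m(|z|^2)\,z^m$ with $p_m:=\sum_{j=0}^{n-1}c_j(1-|z|^2)^j$, and $p_m$ lies in the $n$-dimensional subspace $P_{n-1}\subset K[|z|^2]$ of polynomials of degree at most $n-1$. (Equivalently, one may read this degree bound directly off Lemma~\ref{polyharmonicorder}, since $q_m\in\mathcal{H}^n$ forces $p_m$ to be supported on the powers $|z|^{2j}$ with $j\leq n-1$.)

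It remains to change basis. Proposition~\ref{Olofssonbasis}, applied with $n$ replaced by $n-1$ and with the nonnegative integer $m$, asserts that $\mathit{O}_0^{m,n-1},\ldots,\mathit{O}_{n-1}^{m,n-1}$ form a $K$-basis of $P_{n-1}$. Therefore there are unique scalars $k_{m,0},\ldots,k_{m,n-1}\in K$ with $p_m=\sum_{j=0}^{n-1}k_{m,j}\mathit{O}_j^{m,n-1}$; multiplying by $\xi_m(z)=z^m$ yields \eqref{Cellulardecompositionstatement}. The only step that takes any thought — and the main obstacle — is the passage to the scalar coefficient: one must verify that $\pi_m$ commutes with multiplication by $(1-|z|^2)^j$ and that it sends an element of $\mathcal{H}^1$ to a scalar multiple of $\xi_m(z)$, so that the polyharmonicity of $p$ (via Almansi, or via Lemma~\ref{polyharmonicorder}) really does pin $p_m$ down as an element of $P_{n-1}$. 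Once that is in place, the theorem is immediate from the basis property in Proposition~\ref{Olofssonbasis}; no estimates or new constructions are required.
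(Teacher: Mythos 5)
Your proof is correct and follows essentially the same route as the paper: Almansi representation (Proposition \ref{almansi}), projection onto the $m$:th component to exhibit $q_m$ as $\xi_m(z)$ times a polynomial in $K[|z|^2]$ of degree at most $n-1$, and then the change of basis from Proposition \ref{Olofssonbasis}. The only cosmetic differences are that you reduce to $m\geq 0$ by conjugation and justify the projection step via the commutation of $\pi_m$ with multiplication by $K[|z|^2]$, whereas the paper simply expands each harmonic summand in monomials and reads off the coefficient of $\xi_m(z)$ directly.
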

\begin{proof}
Let $p(z,\bar{z})$ be a polyharmonic polynomial in $\mathcal{H}^n$ of order $n$, and recall the Almansi representation from Proposition \ref{almansi} that expresses this poly\-nomial in the form of
\begin{equation}
\label{sumCdecomthm}
p(z,\bar{z})=p_0(z,\bar{z})+(1-|z|^2)p_1(z,\bar{z})+\ldots+(1-|z|^2)^{n-1}p_{n-1}(z,\bar{z}),
\end{equation}
for some harmonic polynomials $p_0(z,\bar{z}),\ldots,p_{n-1}(z,\bar{z}) \in \mathcal{H}^1$. The polynomials $p_j(z,\bar z) \in \mathcal{H}^1$ can be written in the form 
\begin{equation*}
p_j(z,\bar{z})=\sum_{i=0}^{N_j} c_{j,i} z^i + \sum_{i=1}^{N_j}c_{j,-i}\bar{z}^i, \quad 0 \leq j \leq n-1,
\end{equation*} 
for some $c_{j,i} \in K$, and are of degree at most $N_j \in \N$. By further extending such sums by zero if necessary, we may as well assume that they are all of the same maximum degree $M=\max\{{N_0,N_1,\ldots,N_{n-1}}\}$. We can then write \eqref{sumCdecomthm} in the form of 
\begin{equation*}
p(z,\bar{z})=\sum_{j=0}^{n-1}\sum_{i=0}^{M} c_{j,i}(1-|z|^2)^j z^i + \sum_{j=0}^{n-1}\sum_{i=1}^{M}c_{j,-i}(1-|z|^2)^j\bar{z}^i.
\end{equation*}  
Let $m \in \Z$ be such that $-M \leq m \leq M$ and set
\begin{equation*}
q_m(z,\bar{z})=\pi_m p(z,\bar{z}),
\end{equation*}  
for the projection of $p(z,\bar{z})$ onto its $m$:th component. Then
\begin{equation}
\label{mcomponentsumthm}
q_m(z,\bar{z})=\xi_m(z)\sum_{j=0}^{n-1} c_{j,m}(1-|z|^2)^j.
\end{equation}
Notice that each term in the sum is a polynomial in $K[|z|^2]$ of degree no greater than $n-1$. We now know from Proposition \ref{Olofssonbasis} that the $n$ poly\-nomials  
\begin{equation*}
\mathit{O}_{0}^{|m|,n-1}(|z|^2),\ldots,\mathit{O}_{n-1}^{|m|,n-1}(|z|^2) \in K[|z|^2],
\end{equation*}
make a $K$-basis for all polynomials in $K[|z|^2]$ of degree at most $n-1$. In a change of basis,
\begin{equation*}
(1-|z|^2)^l=\sum_{j=0}^{n-1} t_{l,j}^{|m|,n-1} \mathit{O}_{j}^{|m|,n-1}(|z|^2),
\end{equation*}   
for some rational coefficients $t_{l,j}^{|m|,n-1} \in \Q \subset K$, where $0 \leq l \leq n-1$. Inserting the last expression back into the sum in \eqref{mcomponentsumthm} gives 
\begin{equation*}
q_m(z,\bar{z})=\xi_m(z)\sum_{j=0}^{n-1} \sum_{i=0}^{n-1} c_{j,m} t_{j,i}^{|m|,n-1} \mathit{O}_{i}^{|m|,n-1}(|z|^2).
\end{equation*}
We obtain \eqref{Cellulardecompositionstatement} by letting
\begin{equation*}
k_{m,i}=\sum_{j=0}^{n-1} c_{j,m}t_{j,i}^{|m|,n-1}, \quad 0 \leq i \leq n-1.
\end{equation*}
This completes the proof of this theorem. 
\end{proof}
  
As to the implications that follow, we also recall from earlier that each polynomial in $K[z,\bar z]=\bigcup_n \mathcal{H}^n$ is polyharmonic to some order $n \in \N$. 
\begin{cor}
\label{borhedsums}
Let $n \in \N$ be positive and let $p(z,\bar{z}) \in \mathcal{H}^n$ be a polyharmonic polynomial in the $K^{\star}$-module $\mathcal{H}$ of order $n$. Then
\begin{align}
\label{sumsofborhed}
p(z,\bar{z})&=\sum_{m} k_{m,0}\mathit{O}_{0}^{|m|,n-1}\xi_m(z)+\ldots+\sum_{m} k_{m,n-1}\mathit{O}_{n-1}^{|m|,n-1} \xi_m(z),
\end{align}
for some $k_{m,j} \in K$ that are non-zero for at most finitely many indices $m \in \Z$ and $0 \leq j \leq n-1$.
\begin{proof}
The polynomial $p(z,\bar z) \in K[z,\bar z]$ is the sum of its components, and by Theorem \ref{Cdecompthm}, each such component is given by 
\begin{equation*}
\pi_m p(z,\bar z)=k_{m,0}\mathit{O}_{0}^{|m|,n-1}\xi_m(z)+\ldots+k_{m,n-1}\mathit{O}_{n-1}^{|m|,n-1}\xi_m(z), \quad m \in \Z,
\end{equation*} 
for some $k_{m,j} \in K$. Since $p(z,\bar z)$ is a polynomial, the number of such components is finite. 
\end{proof}
\end{cor}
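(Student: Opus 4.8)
The plan is to obtain this as an immediate consequence of Theorem \ref{Cdecompthm} together with the coordinate decomposition supplied by Proposition \ref{linearindependencehomparts}. First I would use Proposition \ref{linearindependencehomparts} (and the corollary following it) to write $p(z,\bar z)$ as the sum of its components, $p(z,\bar z)=\sum_m \pi_m p(z,\bar z)$, where $\pi_m$ denotes the projection onto the $K^{\star}$-submodule $K^{\star}\xi_m(z)$ built from the operators in \eqref{projectionmaps}. Since $p(z,\bar z)$ is a polynomial it has finite degree, so $\pi_m p(z,\bar z)=0$ for all but finitely many $m$; fix $N\in\N$ with $\pi_m p(z,\bar z)=0$ whenever $|m|>N$.

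Next, since $p(z,\bar z)\in\mathcal{H}^n$, Theorem \ref{Cdecompthm} applies and describes each component: for every $m$ there are scalars $k_{m,0},\ldots,k_{m,n-1}\in K$ with
$$\pi_m p(z,\bar z)=k_{m,0}\mathit{O}_{0}^{|m|,n-1}\xi_m(z)+\ldots+k_{m,n-1}\mathit{O}_{n-1}^{|m|,n-1}\xi_m(z).$$
Setting $k_{m,j}=0$ for $|m|>N$ and $0\le j\le n-1$ (consistent with $\pi_m p(z,\bar z)=0$ there), I would sum these identities over $m$ and then interchange the order of the two finite summations, grouping by the index $j$; this produces exactly the expression \eqref{sumsofborhed}. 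The finiteness assertion is then automatic, since the sum over $m$ runs over the finite set $|m|\le N$ while $j$ ranges over $\{0,1,\ldots,n-1\}$.

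I do not anticipate a genuine obstacle here: all of the substantive content — the Almansi expansion from Proposition \ref{almansi}, the fact that the $\mathit{O}_{j}^{|m|,n-1}$ span the subspace of degree-$\le n-1$ polynomials in $K^{\star}$ (Proposition \ref{Olofssonbasis}), and the change-of-basis bookkeeping of Proposition \ref{rationalfunctionscoeff} — has already been absorbed into Theorem \ref{Cdecompthm}. The only points deserving a word of care are that a single order $n$ serves every component simultaneously, which holds because $p$ as a whole lies in $\mathcal{H}^n$, and that reindexing a finite double sum is harmless.
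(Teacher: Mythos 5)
Your proposal is correct and follows essentially the same route as the paper: decompose $p$ into its finitely many components $\pi_m p$, apply Theorem \ref{Cdecompthm} to each, and sum. The extra care you take about fixing $N$, setting vanishing coefficients to zero, and interchanging the finite double sum is implicit in the paper's shorter argument but changes nothing substantive.
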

Theorem \ref{Cdecompthm} leads to the so called cellular decomposition for polynomials in $K[z,\bar z]$, which has acquired its name from the representation for polyharmonic functions that was first provided by A.~Borichev and H.~Hedenmalm in \cite{BH}. We follow practice and introduce the endomorphisms $M^l$ on $K[z,\bar z]$ that act via multi\-plication by the basis elements of $K[|z|^2]$ in the form of   
\begin{equation}
\label{Moperator}
M^lq(z,\bar z)=(1-|z|^2)^lq(z,\bar z), \quad l \geq 0.
\end{equation}
We also recall the representation for the $(\gamma_1,\gamma_2)$-harmonic polynomials that were given in \eqref{pqpolrepresentations} and the hypergeometric polynomials $\mathit{o}_l^{m,n} \in K^{\star}$ from \eqref{hyperfunctionCdecomplemma}, and write
\begin{equation}
\label{olofssonsumsborhedpol}
w_l(z,\bar z)=\sum_{m=-\infty}^{\infty} k_{m,l}\mathit{o}_{l}^{|m|,n-1}\xi_m(z), \quad 0 \leq l\leq n-1,
\end{equation}
for some sequence of elements $k_{m,l} \in K$ that are non-zero for at most finitely many $m \in \Z$. By the comments that were made in connection with \eqref{pqpolrepresentations}, each such polynomial $w_l(z,\bar z)$ is a solution to the equation 
\begin{equation*}
L_{n-l-1,n-l-1}w_l(z,\bar z)=0,
\end{equation*}
for $0 \leq l \leq n-1$. The representation for polyharmonic polynomials that follows will later be given in a more general setting. 
\begin{cor}
\label{cellulardecompthmpolynomials}
Let $n \in \N$ be positive and let $p(z,\bar{z}) \in \mathcal{H}^n$ be a polyharmonic polynomial in $K[z,\bar z]$ of order $n$. Then the polynomial $p(z,\bar z)$ can be written in the form of
\begin{align}
\label{cellulardecomp}
p(z,\bar{z})&=w_0(z,\bar z)+M^1w_1(z,\bar z)+\ldots+M^{n-1}w_{n-1}(z,\bar z),
\end{align}
where $M^l$ is the operator in \eqref{Moperator}, and with $w_l$ given in the form of \eqref{olofssonsumsborhedpol}, relative to some finite sequence $\{k_{m,l} \}_{m \in \Z}$ of elements in $K$ for $0 \leq l \leq n-1$. 
\end{cor}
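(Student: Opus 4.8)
The plan is to obtain \eqref{cellulardecomp} as an essentially immediate rearrangement of Corollary~\ref{borhedsums}, exploiting the fact that the polynomials $\mathit{O}_l^{|m|,n-1}$ already carry the factors $(1-|z|^2)^l$ by definition. First I would invoke Corollary~\ref{borhedsums} to write the order-$n$ polyharmonic polynomial $p(z,\bar z)$ as
\begin{equation*}
p(z,\bar{z})=\sum_{l=0}^{n-1}\ \sum_{m} k_{m,l}\,\mathit{O}_{l}^{|m|,n-1}\xi_m(z),
\end{equation*}
for some $k_{m,l}\in K$ that vanish for all but finitely many pairs $(m,l)$; in particular the inner sum over $m\in\Z$ is finite.

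Next I would recall from \eqref{ApolynomialsCdecomplemma} that $\mathit{O}_{l}^{|m|,n-1}(|z|^2)=(1-|z|^2)^l\,\mathit{o}_{l}^{|m|,n-1}(|z|^2)$, so that every term in the $l$-th inner sum carries the common factor $(1-|z|^2)^l$. Factoring this out and identifying it with the endomorphism $M^l$ from \eqref{Moperator} gives
\begin{equation*}
p(z,\bar{z})=\sum_{l=0}^{n-1} M^l\Big(\sum_{m} k_{m,l}\,\mathit{o}_{l}^{|m|,n-1}\xi_m(z)\Big)=\sum_{l=0}^{n-1} M^l w_l(z,\bar z),
\end{equation*}
where $w_l(z,\bar z)=\sum_{m} k_{m,l}\,\mathit{o}_{l}^{|m|,n-1}\xi_m(z)$ is precisely the polynomial defined in \eqref{olofssonsumsborhedpol}, relative to the same finite sequence $\{k_{m,l}\}_{m\in\Z}$; this is exactly the asserted form \eqref{cellulardecomp}, and each $w_l$ is a genuine polynomial since only finitely many $k_{m,l}$ are nonzero. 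Finally I would note, as already recorded in the paragraph preceding the statement, that comparing $\mathit{o}_{l}^{|m|,n-1}=F(-(n-1-l),|m|-(n-1-l),|m|+1;|z|^2)$ from \eqref{hyperfunctionCdecomplemma} with the representation \eqref{pqpolrepresentations} for the parameter choice $\gamma_1=\gamma_2=n-1-l$ shows that $w_l$ has the form of an $(n-1-l,n-1-l)$-harmonic polynomial, so that $L_{n-1-l,n-1-l}w_l(z,\bar z)=0$.

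I do not expect a genuine obstacle here: all the substantive content is already contained in Theorem~\ref{Cdecompthm} and Corollary~\ref{borhedsums}, and this final corollary is pure bookkeeping — pulling the $(1-|z|^2)^l$ factors out of the $\mathit{O}$'s and regrouping the double sum according to the power $l$. The only points that call for a word of care are that the reindexing keeps the coefficient families $\{k_{m,l}\}_{m\in\Z}$ finitely supported (so the $w_l$ remain polynomials) and that no convergence questions intervene, which is automatic in the polynomial setting and will only become the real issue in the later passage to smooth functions on $\D$.
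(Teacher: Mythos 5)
Your proposal is correct and follows essentially the same route as the paper: invoke Corollary \ref{borhedsums}, use $\mathit{O}_l^{|m|,n-1}=(1-|z|^2)^l\mathit{o}_l^{|m|,n-1}$ to pull out the factor $M^l$, and identify the remaining inner sums with the $w_l$ of \eqref{olofssonsumsborhedpol}. Your closing remark on the $(n-1-l,n-1-l)$-harmonicity of $w_l$ matches what the paper records in the paragraph preceding the corollary.
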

\begin{proof}
Recall from \eqref{ApolynomialsCdecomplemma} that 
\begin{equation*}
\mathit{O}_l^{m,n-1}(|z|^2)=(1-|z|^2)^l\mathit{o}_l^{m,n-1}(|z|^2)=M^l\mathit{o}_l^{m,n-1}(|z|^2), \quad 0 \leq l \leq n-1. 
\end{equation*}
By Corollary \ref{borhedsums} and equation \eqref{sumsofborhed}, we can then write
\begin{align*}
p(z,\bar{z})&=\sum_{j=0}^{n-1} \sum_{m} k_{m,j}\mathit{O}_j^{|m|,n-1}\xi_m(z)= \sum_{j=0}^{n-1} M^j \bigg(\sum_{m}k_{m,j}\mathit{o}_j^{|m|,n-1}\xi_m(z)\bigg),
\end{align*}
for some $k_{m,l} \in K$ that are non-zero for at most finitely many indices $m \in \Z$ and $0 \leq l \leq n-1$. The conclusion is now evident from \eqref{olofssonsumsborhedpol}.
\end{proof}
In connection with the earlier discussions, we will also provide the following form for this representation, where we recall the polynomials $e_{m,n}(z,\bar z)$ that were defined in \eqref{bases}. 
\begin{cor}
Let $n \in \N$ be positive and let $p(z,\bar{z}) \in \mathcal{H}^n$ be a polyharmonic polynomial in the $K^{\star}$-module $\mathcal{H}$ of order $n$. Then
\begin{align*}
p(z,\bar{z})&=\sum_{j=0}^{n-1}\sum_{m} k_{m,j}\mathit{o}_{j}^{|m|,n-1}e_{m,j}(z,\bar z)
\end{align*}
for some $k_{m,l} \in K$ that are non-zero for at most finitely many indices $m \in \Z$ and $0 \leq l \leq n-1$.
\begin{proof}
This is just another way to express the cellular decomposition in \eqref{cellulardecomp}, noting that $M^j\xi_m(z)=e_{m,j}(z,\bar z)$ for $0 \leq j \leq n-1$. 
\end{proof}
\end{cor}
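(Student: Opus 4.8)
The plan is to read this off directly from the sums produced by Theorem \ref{Cdecompthm}, i.e.\ from Corollary \ref{borhedsums}. Since $p(z,\bar z)\in\mathcal H^n$ is polyharmonic of order $n$, that corollary gives
\[
p(z,\bar z)=\sum_{j=0}^{n-1}\sum_{m}k_{m,j}\,\mathit O_j^{|m|,n-1}\,\xi_m(z),
\]
where the $k_{m,j}\in K$ vanish for all but finitely many pairs $(m,j)$ with $m\in\Z$ and $0\le j\le n-1$, and where $\mathit O_j^{|m|,n-1}\in K^{\star}$ is the polynomial from \eqref{ApolynomialsCdecomplemma}.

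Next I would substitute the factorization $\mathit O_j^{m,n-1}(|z|^2)=(1-|z|^2)^j\,\mathit o_j^{m,n-1}(|z|^2)=M^j\,\mathit o_j^{m,n-1}(|z|^2)$, read off from \eqref{ApolynomialsCdecomplemma} and \eqref{Moperator}, into the display above. Because $\mathit o_j^{|m|,n-1}$ and $M^j$ are both multiplication by elements of the commutative subring $K[|z|^2]\subset A_2(K)$, they commute, so each summand becomes $k_{m,j}\,\mathit o_j^{|m|,n-1}\,(1-|z|^2)^j\,\xi_m(z)$. The one identity to invoke is then $(1-|z|^2)^j\,\xi_m(z)=e_{m,j}(z,\bar z)$, valid for every $m\in\Z$ by the definitions \eqref{bases} and \eqref{xisymbol}; for $m<0$ this uses $e_{m,j}(z,\bar z)=e_{-m,j}(\bar z,z)=(1-|z|^2)^j\bar z^{|m|}$ together with the symmetry of $|z|^2=z\bar z$ in its two arguments. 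Substituting and summing over $0\le j\le n-1$ yields
\[
p(z,\bar z)=\sum_{j=0}^{n-1}\sum_{m}k_{m,j}\,\mathit o_j^{|m|,n-1}\,e_{m,j}(z,\bar z),
\]
which is the assertion.

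There is essentially no obstacle here; the only point worth stating carefully is the commutativity just used, together with the remark that finiteness of the coefficient set is inherited verbatim from Corollary \ref{borhedsums} since $p$ is a polynomial. One could equally begin from the cellular decomposition \eqref{cellulardecomp} and expand each $M^j w_j$ via \eqref{olofssonsumsborhedpol} and the same identity $M^j\xi_m(z)=e_{m,j}(z,\bar z)$; the two routes differ only in the order in which the operator $M^j$ is peeled off.
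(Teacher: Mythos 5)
Your proposal is correct and follows essentially the same route as the paper: the paper derives the statement from the cellular decomposition \eqref{cellulardecomp}, which it had itself obtained from Corollary \ref{borhedsums} via the factorization $\mathit{O}_j^{|m|,n-1}=M^j\mathit{o}_j^{|m|,n-1}$, so starting directly from Corollary \ref{borhedsums} as you do amounts to the same substitution $M^j\xi_m(z)=e_{m,j}(z,\bar z)$ performed one step earlier. Your explicit check of this identity for $m<0$ via the conjugation convention for $e_{m,j}$ is a correct and worthwhile detail that the paper leaves implicit.
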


\section{Generalized harmonic functions}

We shall pass to the function setting. 

\bigskip

Let $K$ be the field of complex numbers $\C \cong \R^2$. Let $z=x+iy$ be the usual decomposition of a complex number into its real- and imaginary part, and denote its complex conjugate by $\bar z=x-iy$. The map on $K[z,\bar z]$ that takes the polynomial $p(z,\bar z)$ to the polynomial function $z \mapsto p(z,\bar z)$ for $z \in \C$ is an isomorphism of rings from $K[z,\bar z]$ to the ring of polynomial functions on $\C$ with its usual rules of addition and multiplication.\footnote{The polynomial functions may be taken as the complexification of real analytic functions, and justifies the use of the notation $p(z,\bar z)$ for such functions.} The complex differential operators or Wirtinger derivatives are given in the common sense by $\partial=(\partial_x-i\partial_y)/2$ and $\bar \partial=(\partial_x+i\partial_y)/2$, and they satisfy $\partial z=\bar \partial \bar z=1$ and $\partial \bar z = \bar \partial z=0$ on $\C$. Against this background, we may then transfer the results that were obtained for the polynomial ring $K[z,\bar z]$ to the algebra of polynomial functions on $\C$. The latter should come to be regarded as embedded within the space $C^{\infty}(\D)$ of smooth functions on $\D \subset \C$, and the complex conjugate $\bar{f}$ of a function $f \in C^{\infty}(\D)$ will as usual be given by $\bar{f}(z)=\overline{f(z)}$. The algebra $A_2(\C)$ is the algebra generated by $z,\bar z, \partial$ and $\bar \partial$ in $\textnormal{End}(C^{\infty}(\D))$, and $D \in A_2(\C)$ is said to commute with rotations if $(Du) \circ R_{e^{i\theta}}=D(u \circ R_{e^{i\theta}})$ for $u \in C^{\infty}(\D)$ and $R_{e^{i\theta}} \in \textnormal{SO}(2) \cong \T$. The subalgebra $\mathfrak{R}_2 \subset A_2(\C)$ will then be defined as the set of all differential operators that commute with rotations, and $\mathfrak{H}_2 \subset \mathfrak{R}_2$ is the algebra generated by the differential operators $z \partial, \bar z \bar \partial$ and $\partial \bar \partial$ in $A_2(\C)$.  

\begin{dfn}
A generalized harmonic function $u$ on $\D$ is a function $u \in C^{n}(\D)$ that satisfies $Du=0$ in $\D$ for some $D \in \mathfrak{H}_2$, where $n \in \N$ is the order of $D \in A_2(\C)$.
\end{dfn} 

Recall the operator $L_{\gamma_1,\gamma_2} \in \mathfrak{H}_2$ from \eqref{pqoperator} for $\gamma_1,\gamma_2 \in \C$. A formal inspection of the representation in \eqref{pqpolrepresentations} for $(\gamma_1,\gamma_2)$-harmonic functions suggests that they arise from the simpler $e_{m,n}$ that were given in \eqref{bases}. Formally, we can express this relation as 
\begin{equation}
u \sim \sum_{m\in \Z}\sum_{n \in \N} c_{m,n} e_{m,n},
\end{equation}
following a rewriting of the hypergeometric functions in \eqref{pqpolrepresentations} and a rearrangement of terms. This gives us a hint on how to proceed next. As in the case of the finite setting, we shall narrow the numbers somewhat and restrict to the case of integer parameters. The hyper\-geometric functions appearing in \eqref{pqpolrepresentations} are then polynomial expressions in $z\bar z=|z|^2$ over $\Q$, and the restricted setting $\gamma_1,\gamma_2 \in \N$ is in fact the only for which each of the hyper\-geometric functions in \eqref{pqpolrepresentations} are reduced to polynomial expressions over $\Q$. Complex-wise, we also have that a function $u$ is $(\gamma_1,\gamma_2)$-harmonic if and only if its conjugate $\bar u$ is $(\bar{\gamma}_2,\bar{\gamma}_1)$-harmonic. Hence, the only setting for which all the hypergeometric functions are polynomial expressions in $|z|^2$ on $\D$ while both $u$ and its conjugate $\bar u$ are $(\gamma_1,\gamma_2)$-harmonic, is the case where $\gamma_1=\gamma_2=n$ for some $n \in \N$. These remarks should be taken from the view point of the earlier discussion, and in particular, that which followed $\eqref{pqoperator}$ and \eqref{nnharmonic}. 

We shall proceed by transferring some of the results that were given in the confinement to polynomials, and from the polynomial ring $K[z,\bar z]$ to the ring $C^{\infty}(\D)$ of smooth functions on $\D$. Our main concern will be that of Proposition \ref{pqtopolyprop} and the cellular decomposition in Corollary \ref{cellulardecompthmpolynomials}. The first of these two concerned the way in which $(\gamma_1,\gamma_2)$-harmonic polynomials can be represented in terms of polyharmonic polynomials, while the latter dealt with the inverse relationship. We will start with the former, and regard the subsequent proceedings as an instructive way to describe how such functions may arise. As for the finite setting, the conclusion itself is to expected from the following preliminary discussion.       

A polyharmonic function of order $n$ on the unit disc $\D$ is a complex function $u$ that is continuously differentiable to some appropriate degree and satisfies 
\begin{equation*}
\partial^n\bar \partial^n u=0, \quad \textnormal{in} \ \D. 
\end{equation*}
As in the case of polynomials, it can be shown that a function $u \in C^{2n}(\D)$ is a polyharmonic function of order $n$ on $\D$ if and only if 
\begin{equation}
\label{almansiexpfunctions}
u(z)=v_0(z)+(1-|z|^2)v_1(z)+\ldots+(1-|z|^2)^{n-1}v_{n-1}(z), \quad z \in \D,
\end{equation}
for some harmonic functions $v_i \in C^{\infty}(\D)$ such that $\partial \bar \partial v_i=0$ holds in $\D$ for every $0 \leq i \leq n-1$ \cite[Section 32]{Gakhov}. From \eqref{pqpolrepresentations} we also know that each of the latter can be written in the form 
\begin{equation}
\label{harmonicsumfunctions}
v_i(z)=\sum_{m=-\infty}^{\infty}c_{i,m}\xi_m(z), \quad 0 \leq i \leq n-1,
\end{equation}
for $z \in \D$ and some $c_{i,m} \in \C$ that fulfil \eqref{limsupcoeff} for $0 \leq i \leq n-1$. 
\begin{prop}
\label{pqtopolpropinfinite}
Let $\gamma_1,\gamma_2 \in \N$. Let $u \in C^{\infty}(\D)$ be a $(\gamma_1,\gamma_2)$-harmonic function on $\D$. Then $u$ can be written as
\begin{equation}
u(z)=P_1(z)+P_2(z)+P_3(z), \quad z \in \D, 
\end{equation}
for some functions $P_i \in C^{\infty}(\D)$, where $P_1$ is polyharmonic to an order at least $\gamma_1+1 \in \N$, $P_2$ to an order at least $\gamma_2+1 \in \N$ and the function $P_3$ to an order at least $\min(\gamma_1+1,\gamma_2+1) \in \N$. Thus, every $(\gamma_1,\gamma_2)$-harmonic function that is parametrized by the non-negative whole numbers is polyharmonic to an order at least $\max(\gamma_1+1,\gamma_2+1)\in \N$.   
\end{prop}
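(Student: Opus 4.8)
The plan is to read the statement off from the series representation \eqref{pqpolrepresentations}, which in the function setting characterises the $(\gamma_1,\gamma_2)$-harmonic functions $u \in C^2(\D)$ — a fortiori those in $C^\infty(\D)$ — through coefficients $c_m \in \C$ subject to the growth bound \eqref{limsupcoeff}. Accordingly, write $u = P_1 + P_2 + P_3$, where $P_1(z) = \sum_{m \geq 1} c_m F(-\gamma_1, m-\gamma_2, m+1; |z|^2) z^m$, where $P_2(z) = \sum_{m \geq 1} c_{-m} F(-\gamma_2, m-\gamma_1, m+1; |z|^2) \bar z^m$, and where $P_3(z) = c_0 F(-\gamma_1, -\gamma_2, 1; |z|^2)$. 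The strategy is to rerun, for $P_1$ and $P_2$, the computation that produced Corollary \ref{pqtopolycor} in the finite setting, but now with attention paid to convergence, and to dispose of $P_3$ directly by means of Lemma \ref{polyharmonicorder}.

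For $P_1$, Lemma \ref{pqtopolylemma} supplies, for each $m \geq 1$, the finite expansion $F(-\gamma_1, m-\gamma_2, m+1; |z|^2) = \sum_{j=0}^{\gamma_1} t_j(m)(1-|z|^2)^j$ with fixed rational functions $t_0, \ldots, t_{\gamma_1} \in S_{\N}^{-1}K[x]$. Multiplying by $c_m z^m$, summing over $m$, and interchanging the finite $j$-summation with the $m$-summation on each partial sum leads to $P_1(z) = \sum_{j=0}^{\gamma_1} (1-|z|^2)^j h_j(z)$ with $h_j(z) = \sum_{m \geq 1} c_m t_j(m) z^m$. Since each $t_j$ is a rational function with no poles at the non-negative integers and bounded at infinity, $\limsup_{m \to \infty} |t_j(m)|^{1/m} \leq 1$, so \eqref{limsupcoeff} gives $\limsup_{m \to \infty} |c_m t_j(m)|^{1/m} \leq 1$; hence each $h_j$ is a power series of radius of convergence at least $1$, i.e.\ holomorphic — and therefore harmonic and smooth — on $\D$, with locally uniform convergence. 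This both legitimises the rearrangement above in $C^\infty(\D)$ and exhibits $P_1$ in the form \eqref{almansiexpfunctions} with $n = \gamma_1 + 1$ and $v_i = h_i$, so that $P_1$ is polyharmonic of order at least $\gamma_1 + 1$ on $\D$.

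The identical argument with the roles of $\gamma_1$ and $\gamma_2$ interchanged and $z$ replaced by $\bar z$ — the relevant series $h_j$ now being anti-holomorphic and the half of \eqref{limsupcoeff} controlling the $c_{-m}$ being used — shows that $P_2$ is polyharmonic of order at least $\gamma_2 + 1$ on $\D$. For $P_3$, observe that $(-\gamma_1)_k$ vanishes for $k > \gamma_1$ and $(-\gamma_2)_k$ vanishes for $k > \gamma_2$, so $P_3$ is a polynomial in $|z|^2 = z\bar z$ of degree at most $\min(\gamma_1, \gamma_2)$, hence a linear combination of monomials $z^k\bar z^k$ with $k \leq \min(\gamma_1,\gamma_2)$; by Lemma \ref{polyharmonicorder} (transferred to $\D$ as in the opening of this section) it is polyharmonic of order $\min(\gamma_1, \gamma_2) + 1 = \min(\gamma_1 + 1, \gamma_2 + 1)$. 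Finally, since $\partial^a\bar\partial^a f = 0$ forces $\partial^b\bar\partial^b f = 0$ for $b \geq a$ and the smooth functions annihilated by $\partial^N\bar\partial^N$ form a $\C$-vector space, all three summands are annihilated by $\partial^N\bar\partial^N$ with $N = \max(\gamma_1 + 1, \gamma_2 + 1)$; hence so is $u = P_1 + P_2 + P_3$, which is the asserted polyharmonicity of order at least $\max(\gamma_1 + 1, \gamma_2 + 1)$.

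The one point that genuinely goes beyond transcribing the polynomial-setting proof is the passage through infinite sums: one must verify that the transformation coefficients $t_j$ delivered by Lemma \ref{pqtopolylemma}, being rational in $m$, grow at most polynomially (indeed remain bounded) as $m \to \infty$, so that the decay condition \eqref{limsupcoeff} on the original coefficients is inherited by the $c_m t_j(m)$ and the auxiliary series $h_j$ converge in $C^\infty(\D)$. I expect this convergence bookkeeping to be the \emph{main obstacle}; once it is in place, every remaining step is a direct echo of the finite case already established.
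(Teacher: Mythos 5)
Your proposal is correct and follows essentially the same route as the paper's own proof: the same splitting of \eqref{pqpolrepresentations} into $P_1+P_2+P_3$, the same use of Lemma \ref{pqtopolylemma} to rewrite each hypergeometric factor in powers of $1-|z|^2$ with coefficients $t_j(m)$ rational in $m$, the same observation that these coefficients preserve the growth condition \eqref{limsupcoeff} so that the rearranged series converge on $\D$, and the same disposal of $P_3$ as a polynomial in $|z|^2$ of degree at most $\min(\gamma_1,\gamma_2)$. The only cosmetic difference is that you justify convergence of the auxiliary series $h_j$ directly as power series of radius at least $1$, where the paper cites the convergence statement accompanying \eqref{pqpolrepresentations}; these are equivalent here.
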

\begin{proof}
Let $\gamma_1,\gamma_2 \in \N$. Since $u \in C^{\infty}(\D)$ is a $(\gamma_1,\gamma_2)$-harmonic function, it can be written in the form of \eqref{pqpolrepresentations} for some complex numbers $c_m \in \C$ satisfying \eqref{limsupcoeff}. Let $P_{1,N}(z) \in C^{\infty}(\D)$ denote the $N$:th partial sum of the first sum in \eqref{pqpolrepresentations}, such that 
\begin{equation}
\label{partialsumS}
P_{1,N}(z)=\sum_{m=1}^{N} c_m F(-\gamma_1,m-\gamma_2,m+1;|z|^2)z^m, \quad z \in \D,
\end{equation}   
for $N \geq 1$. By Lemma \ref{pqtopolylemma}, we can then write
\begin{equation*}
P_{1,N}(z)=\sum_{m=1}^{N} c_m q_m(z), \quad N \geq 1,
\end{equation*} 
where we recall from \eqref{partialsumpqtopollemma} that the polynomials $q_m \in C^{\infty}(\D)$ are of the form
\begin{equation*}
q_m(z)=\big(t_0(m)+t_1(m)(1-|z|^2)+\ldots+ t_{\gamma_1}(m)(1-|z|^2)^{\gamma_1} \big)z^m, \quad m \geq 1, \quad z \in \D,
\end{equation*}
for some $t_0(x),t_1(x),\ldots,t_{\gamma_1}(x) \in S_{\N}^{-1}K[x]$, where $S_{\N}^{-1}K[x]$ is the localization of $K[x]$ by $S_{\N}$ in \eqref{localization}. Let $M^j$ be the endo\-morphisms on $C^{\infty}(\D)$ that are defined via multiplication by the basis elements of $K[|z|^2]$ in the form of 
\begin{equation*}
M^ju(z)=(1-|z|^2)^ju(z), \quad j \geq 0,
\end{equation*}
for $z \in \D$. Consider the sums
\begin{equation*}
R_{j,N}(z)=\sum_{m=1}^N c_m t_j(m) z^m, \quad 0 \leq j \leq \gamma_1,
\end{equation*} 
for $z \in \D$. Recall that $t_j(x) \in S_{\N}^{-1}K[x]$ for $0 \leq j \leq \gamma_1$ and that the sequence of elements $c_{m} \in \C$ satisfies \eqref{limsupcoeff}. Hence, 
\begin{equation*}
\limsup_{m \rightarrow \infty}|c_mt_j(m)|^{1/m} \leq 1, \quad 0 \leq j \leq \gamma_1.
\end{equation*}
Thus, and according to the comments surrounding \eqref{pqpolrepresentations}, we get that the partial sums $R_{j,N} \in C^{\infty}(\D)$ converges absolutely in the space of smooth functions on $\D$ for $0 \leq j \leq \gamma_1$. Let 
\begin{equation*}
v_{1,j}=\lim_{N \rightarrow \infty}R_{j,N},
\end{equation*}  
in $C^{\infty}(\D)$ for $0 \leq j \leq \gamma_1$. Take the limit of the sequence of partial sums in \eqref{partialsumS} and denote their limit by 
\begin{equation*}
P_1=\lim_{N \to \infty} P_{1,N}, 
\end{equation*}
in $C^{\infty}(\D)$. Then 
\begin{equation*}
P_1(z)=v_{1,0}(z)+M^1v_{1,1}(z)+\ldots+M^{\gamma_1}v_{1,\gamma_1}(z), \quad z \in \D.
\end{equation*} 
By the comments preceding this statement, $P_1 \in C^{\infty}(\D)$ is a poly\-harmonic function to an order at least $\gamma_1+1 \in \N$. 

In view of the above, the polyharmonic representation for the second sum in \eqref{pqpolrepresentations} is obtained by considering the conjugate of 
\begin{equation*}
P_{2,N}(z)=\sum_{m=1}^{N} c_{-m} F(-\gamma_2,m-\gamma_1,m+1;|z|^2)\bar{z}^m, \quad N \geq 1, \quad z \in \D.
\end{equation*}
If we denote this second sum by $P_2 = \lim_{N \to \infty} P_{2,N} \in C^{\infty}(\D)$, then 
\begin{equation*}
P_2(z)=v_{2,0}(z)+M^1v_{2,1}(z)+\ldots+M^{\gamma_2}v_{2,\gamma_2}(z), \quad z \in \D,
\end{equation*}
where each $v_{2,j} \in C^{\infty}(\D)$ is harmonic on $\D$ for $0 \leq j \leq \gamma_2$. In view of the comments that preceded this statement, we see that $P_2 \in C^{\infty}(\D)$ is polyharmonic to an order at least $\gamma_2+1 \in \N$. 

The last term in \eqref{pqpolrepresentations}, call it $P_3 \in C^{\infty}(\D)$, is easily seen to be polyharmonic to an order at least $\min(\gamma_1+1,\gamma_2+1) \in \N$. Collecting terms, we get that
\begin{equation*}
u(z)=P_1(z)+P_2(z)+P_3(z), \quad z \in \D.
\end{equation*}
Hence $u \in C^{\infty}(\D)$ is polyharmonic to an order at least $\max(\gamma_1+1,\gamma_2+1) \in \N$. 
\end{proof}
The following conclusion should be compared to Corollary 6.2 in \cite{BH}. 
\begin{cor}
\label{n-1harmonictopolycor}
Let $n \in \N$ be a positive integer and suppose that $u \in C^{\infty}(\D)$ is a $(n-1,n-1)$-harmonic function on $\D$. Then $u$ is polyharmonic to an order at least $n$ on $\D$.   
\end{cor}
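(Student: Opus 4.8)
The plan is to obtain this as a direct specialization of Proposition~\ref{pqtopolpropinfinite}. First I would note that since $n \in \N$ is positive, we have $n - 1 \in \N$, so that $\gamma_1 = \gamma_2 = n-1$ is an admissible choice of parameters for that proposition. Applying it to the $(n-1,n-1)$-harmonic function $u \in C^{\infty}(\D)$ then yields a decomposition $u = P_1 + P_2 + P_3$ with $P_i \in C^{\infty}(\D)$, where $P_1$ is polyharmonic to an order at least $(n-1)+1 = n$, $P_2$ to an order at least $(n-1)+1 = n$, and $P_3$ to an order at least $\min((n-1)+1,(n-1)+1) = n$.

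It then remains only to observe that the sum of functions that are polyharmonic to an order at least $n$ is again polyharmonic to an order at least $n$, since $\partial^n \bar\partial^n$ is linear and annihilates each summand. Equivalently, one reads off directly from the last sentence of Proposition~\ref{pqtopolpropinfinite} that $u$ is polyharmonic to an order at least $\max\big((n-1)+1,(n-1)+1\big) = n$, which is exactly the claim. One may also remark, to situate the result, that the symmetry $\gamma_1 = \gamma_2$ here is forced: as discussed after \eqref{nnharmonic}, the spaces $\mathcal{H}_{\alpha,\alpha}$ are precisely the ones closed under conjugation, and for $\alpha = n-1 \in \N$ all the hypergeometric factors in \eqref{pqpolrepresentations} reduce to polynomials of degree at most $n-1$ in $|z|^2$ via \eqref{hypergeometricsum}, which is what makes the polyharmonic order collapse to a single value.

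There is no genuine obstacle at this stage: all the substantive work has already been carried out in Proposition~\ref{pqtopolpropinfinite}, in particular the verification via \eqref{limsupcoeff} that the partial sums $R_{j,N}$ with coefficients $c_m t_j(m)$ still converge absolutely in $C^{\infty}(\D)$ after the rational functions $t_j \in S_{\N}^{-1}K[x]$ are applied, together with Lemma~\ref{pqtopolylemma} supplying the finite expansion of each hypergeometric factor in powers of $1-|z|^2$. The present corollary is simply the instance $\gamma_1 = \gamma_2 = n-1$ of that statement, recorded separately for comparison with \cite[Corollary~6.2]{BH}.
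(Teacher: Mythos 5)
Your proposal is correct and is exactly the paper's argument: the paper's proof consists of the single sentence that the corollary follows from Proposition~\ref{pqtopolpropinfinite}, and you have simply spelled out the specialization $\gamma_1=\gamma_2=n-1$ with $\max((n-1)+1,(n-1)+1)=n$. No gap.
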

\begin{proof}
This corollary follows from the more general Proposition \ref{pqtopolpropinfinite}. 
\end{proof}

We will proceed with the cellular decomposition for polyharmonic functions on $\D$, and transfer the result that was given for polynomials in Corollary \ref{cellulardecompthmpolynomials} to the more general setting of smooth functions. This representation for polyharmonic functions has been conceptualized by the two authors who first proved it and is the analogue of \eqref{cellulardecomp} in the more general setting of smooth functions on the unit disc $\D$. It is called the cellular decomposition for polyharmonic functions and was given in \cite{BH} by A.~Borichev and H.~Hedenmalm. 

We shall employ Proposition \ref{rationalfunctionscoeff} in this respect, the content of which relates to the growth of the transformation coefficients that results from the change between the various sets of bases that follow. It ensures convergence and the separation of sums. As was done for polynomials, we shall follow practice and define the endomorphisms $M^j \in \mathfrak{R}_2$ on $C^{\infty}(\D)$ that act via multiplication by the basis elements of $K[|z|^2]$ in the form of 
\begin{equation}
\label{Moperatorfunctioncase}
M^ju(z)=(1-|z|^2)^ju(z), \quad z \in \D,
\end{equation} 
for $j \geq 0$. We also recall the polynomials $\mathit{o}_{j}^{m,n} \in C^{\infty}(\D)$ in \eqref{hyperfunctionCdecomplemma} and the sums $w_j \in C^{\infty}(\D)$ from \eqref{olofssonsumsborhedpol} or \eqref{pqpolrepresentations} in the form of
\begin{equation}
\label{olofssonsumsfunctioncase}
w_j(z)=\sum_{m=-\infty}^{\infty} k_{m,j}\mathit{o}_{j}^{|m|,n-1}(|z|^2)\xi_m(z), \quad z \in \D,
\end{equation}
for some complex sequences $\{k_{m,j}\}_{m=-\infty}^{\infty}$ satisfying \eqref{limsupcoeff} for $0 \leq j\leq n-1$.

\begin{thm}
\label{cellulardecomposition}
Let $u \in C^{2n}(\D)$ be a polyharmonic function to some positive order $n \in \N$. Then $u$ can be written in the form of
\begin{equation}
\label{cellulardecompsmoothfunctions}
u(z)=w_0(z)+M^1w_1(z)+\ldots+M^{n-1}w_{n-1}(z), \quad z \in \D,
\end{equation}
where the $M^j$ are the operators in \eqref{Moperatorfunctioncase}, and $w_j$ is the function in \eqref{olofssonsumsfunctioncase} that is given relative to some sequence of complex numbers $\{k_{m,j}\}_{m=-\infty}^{\infty}$ satisfying \eqref{limsupcoeff} for $0 \leq j \leq n-1$. 
\end{thm}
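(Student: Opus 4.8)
The plan is to carry over the polynomial argument of Theorem~\ref{Cdecompthm} and Corollary~\ref{cellulardecompthmpolynomials} to $C^{2n}(\D)$, using the Almansi representation \eqref{almansiexpfunctions} in place of the polynomial Almansi sum of Proposition~\ref{almansi}, and using Proposition~\ref{rationalfunctionscoeff} to control coefficient growth throughout. First I would start from \eqref{almansiexpfunctions}, write $u(z) = \sum_{j=0}^{n-1}(1-|z|^2)^j v_j(z)$ with $v_j \in C^{\infty}(\D)$ harmonic, and insert \eqref{harmonicsumfunctions}, $v_j(z) = \sum_m c_{j,m}\xi_m(z)$, where $\limsup_{|m|\to\infty}|c_{j,m}|^{1/|m|}\leq 1$. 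As each such series converges absolutely in $C^{\infty}(\D)$ and the sum over $j$ is finite, the double series rearranges to $u(z) = \sum_m \xi_m(z)\big(\sum_{j=0}^{n-1} c_{j,m}(1-|z|^2)^j\big)$; the bracketed factor is the $m$-th component of $u$ and is a polynomial in $|z|^2$ of degree at most $n-1$, exactly as in \eqref{mcomponentsumthm}.

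For each fixed $m$ I would then switch to the basis $\mathit{O}_0^{|m|,n-1},\dots,\mathit{O}_{n-1}^{|m|,n-1}$ of the space of such polynomials provided by Proposition~\ref{Olofssonbasis}, writing $(1-|z|^2)^l = \sum_{i=0}^{n-1} t_{l,i}^{|m|,n-1}\mathit{O}_i^{|m|,n-1}(|z|^2)$ and setting $k_{m,i} = \sum_{j=0}^{n-1} c_{j,m}\,t_{j,i}^{|m|,n-1}$, so that the $m$-th component equals $\sum_{i=0}^{n-1} k_{m,i}\mathit{O}_i^{|m|,n-1}(|z|^2)\xi_m(z) = \sum_{i=0}^{n-1} M^i\big(k_{m,i}\mathit{o}_i^{|m|,n-1}(|z|^2)\xi_m(z)\big)$ by \eqref{ApolynomialsCdecomplemma}. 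Summing over $m$ and interchanging with the finite sum over $i$ would give $u(z) = \sum_{i=0}^{n-1} M^i w_i(z)$ with $w_i$ of the form \eqref{olofssonsumsfunctioncase}.

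The step that needs care, and the main obstacle, is the convergence bookkeeping: one must verify that $\{k_{m,i}\}_{m}$ obeys \eqref{limsupcoeff} and that the double series over $(m,i)$ converges absolutely in $C^{\infty}(\D)$, so the interchange is legitimate and each $w_i$ lands in $C^{\infty}(\D)$. This is exactly what Proposition~\ref{rationalfunctionscoeff} supplies: $t_{j,i}^{|m|,n-1} = t_{j,i}^{n-1}(|m|)$ is the value at $|m|$ of a fixed rational function in $S_{\N}^{-1}K[x]$, hence $|t_{j,i}^{|m|,n-1}|$ grows at most polynomially in $|m|$ and $|t_{j,i}^{|m|,n-1}|^{1/|m|}\to 1$; since a finite sum of products of sequences each satisfying \eqref{limsupcoeff} again satisfies it, $\limsup_{|m|\to\infty}|k_{m,i}|^{1/|m|}\le 1$ follows. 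The coefficients of $\mathit{o}_i^{|m|,n-1}(|z|^2)$ are quotients of Pochhammer symbols bounded in $m$, while $\|\xi_m\|_{C^k(K)}$ decays geometrically on each compact $K\subset\D$, so $\sum_{m,i}|k_{m,i}|\,\|M^i\mathit{o}_i^{|m|,n-1}\xi_m\|_{C^k(K)}<\infty$; this justifies the rearrangement and, by the comments following \eqref{pqpolrepresentations}, identifies $w_i$ as the $(n-1-i,n-1-i)$-harmonic function with coefficients $k_{m,i}$, hence $w_i \in C^{\infty}(\D)$. Finally, once this absolute convergence is in hand, regrouping the double series one way gives $\sum_m \pi_m u = u$ and the other way gives $\sum_{i=0}^{n-1} M^i w_i$, so the two sides of \eqref{cellulardecompsmoothfunctions} coincide on $\D$.
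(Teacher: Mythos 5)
Your proposal is correct and follows essentially the same route as the paper's proof: both start from the Almansi representation \eqref{almansiexpfunctions} with the harmonic expansions \eqref{harmonicsumfunctions}, change basis via Proposition \ref{Olofssonbasis}, control the transformation coefficients through Proposition \ref{rationalfunctionscoeff} to verify \eqref{limsupcoeff} for $k_{m,j}=\sum_l t_{l,j}^{n-1}(m)c_{l,m}$, and invoke the convergence statement attached to \eqref{pqpolrepresentations} to justify the rearrangement and identify each $w_j$. The only cosmetic difference is that you collect the $m$-th components first and then change basis, whereas the paper changes basis inside each partial sum $V_{l,N}$ and then sums over $l$.
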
 
\begin{proof}
By \eqref{almansiexpfunctions} we can write 
\begin{equation*}
u(z)=v_0(z)+(1-|z|^2)v_1(z)+\ldots+(1-|z|^2)^{n-1}v_{n-1}(z), \quad z \in \D,
\end{equation*}
for some harmonic functions $v_i \in C^{\infty}(\D)$. Introduce the functions 
\begin{equation*}
V_i(z)=M^iv_i(z), \quad z \in \D,
\end{equation*}
for $0 \leq i \leq n-1$. Express $v_i$ in the form of \eqref{harmonicsumfunctions} and write $v_{i,N}$ for its $N$:th partial sum
\begin{equation*}
v_{i,N}(z)=\sum_{m=-N}^{N}c_{i,m}\xi_m(z), \quad 0 \leq i \leq n-1,  \quad z \in \D.
\end{equation*}
Let $V_{i,N}=M^iv_{i,N}$ for $0 \leq i \leq n-1$. Then 
\begin{equation*}
V_{i,N}(z)=M^iv_{i,N}(z)=\sum_{m=-N}^{N}c_{i,m}(1-|z|^2)^i\xi_m(z),
\end{equation*}
for $0 \leq i \leq n-1$ and $z \in \D$. 

Let $l \in \N$ be such that $0 \leq l \leq n-1$ and let $m \in \Z$ be such that $-N \leq m \leq N$. By Proposition \ref{Olofssonbasis}, we can write
\begin{equation*}
(1-|z|^2)^l=\sum_{i=0}^{n-1}t_{l,i}^{|m|,n-1}\mathit{O}_{i}^{|m|,n-1}(|z|^2),
\end{equation*}
for some rational numbers $t_{l,j}^{|m|,n-1} \in \Q$ for $0 \leq j \leq n-1$, as further explained in Proposition \ref{rationalfunctionscoeff}.  Set 
\begin{equation}
\label{rationalcoeff}
t_{l,j}^{n-1}(m)=t_{l,j}^{|m|,n-1}, \quad 0 \leq j \leq n-1.
\end{equation}
Project $V_{l,N}$ onto the span of $\xi_m$ by use of the projection map $\pi_m$ from \eqref{projectionmaps}. Then 
\begin{equation*}
\pi_mV_{l,N}(z)=\sum_{i=0}^{n-1}c_{l,m}t_{l,i}^{n-1}(m)\mathit{O}_{i}^{|m|,n-1}(|z|^2)\xi_m(z), \quad z \in \D. 
\end{equation*}
We shall sum over each of these terms.

Recall that 
\begin{equation*}
\mathit{O}_{j}^{|m|,n-1}=M^j\mathit{o}_{j}^{|m|,n-1}, \quad \textnormal{on} \ \D,
\end{equation*}
for $0 \leq j \leq n-1$, where $\mathit{o}_{j}^{|m|,n-1}$ is the hypergeometric function in \eqref{hyperfunctionCdecomplemma}. Set
\begin{equation*}
W_{l,N}^j(z)=\sum_{m=-N}^{N}c_{l,m}t_{l,j}^{n-1}(m)\mathit{o}_{j}^{|m|,n-1}(|z|^2)\xi_m(z), \quad z \in \D,
\end{equation*}
for $0 \leq j \leq n-1$ and $N \geq 0$. By Proposition \ref{rationalfunctionscoeff}, we can identify each sequence $\{t_{l,j}^{n-1}(m)\}_{m \in \Z} \subset \Q$ of such numbers as the image $t_{l,j}^{n-1}(\Z)$ of a rational function $t_{l,j}^{n-1}(x) \in S_{\N}^{-1} K[x]$ for $0 \leq j \leq n-1$, noting that $t_{l,j}^{n-1}(m)=t_{l,j}^{n-1}(-m)$ for $m \in \N$. In particular then, 
\begin{equation*}
\limsup_{|m| \rightarrow \infty}|t_{l,j}^{n-1}(m)c_{l,m}|^{1/|m|} \leq 1,
\end{equation*}
for $0 \leq j \leq n-1$. By Theorem 5.1 in \cite{OK}, as presented in \eqref{pqpolrepresentations}, we then get that each of the sums $W_{l,N}^j$ converges absolutely in the space of smooth functions on $\D$ for $0 \leq j \leq n-1$. Let $W_{l}^j$ denote the resulting sum for $0 \leq j \leq n-1$.  

To conclude then, we have that each of the sums $W_{l}^j$ converges absolutely in the space of smooth functions on $\D$ for $0 \leq l \leq n-1$ and $0 \leq j \leq n-1$. Set
\begin{equation*}
w_j(z)=W_{0}^j(z)+W_{1}^j(z)+\ldots+W_{n-1}^j(z), \quad z \in \D,
\end{equation*}
for $0 \leq j \leq n-1$. Note that 
\begin{align*}
w_j(z)=\sum_{m=-\infty}^{\infty}\big(c_{0,m}t_{0,j}^{n-1}(m)+\ldots+c_{n-1,m}t_{n-1,j}^{n-1}(m)\big)\mathit{o}_{j}^{|m|,n-1}(|z|^2)\xi_m(z),
\end{align*}
for $0 \leq j \leq n-1$ and $z \in \D$. Let 
\begin{equation*}
k_{m,j}=t_{0,j}^{n-1}(m)c_{0,m}+\ldots+t_{n-1,j}^{n-1}(m)c_{n-1,m}, \quad m \in \Z, \quad 0 \leq j \leq n-1, 
\end{equation*}
and recall that $w_j$ is $(n-1-j,n-1-j)$-harmonic such that \eqref{limsupcoeff} holds. Then  
\begin{equation*}
u(z)=w_0(z)+M^1w_1(z)+\ldots+M^{n-1}w_{n-1}(z), \quad z \in \D. 
\end{equation*}
This completes the proof of the statement.
\end{proof}

The cellular decomposition for the polyharmonic functions on $\D$ is unique in the following sense.
Let $w_{1,j},w_{2,j} \in C^{\infty}(\D)$ be given as in \eqref{olofssonsumsfunctioncase} relative to some complex sequences that satisfy \eqref{limsupcoeff} for $0 \leq j \leq n-1$. Suppose that 
\begin{equation*}
w_{1,0}+Mw_{1,1}+\ldots+M^{n-1}w_{1,{n-1}}=w_{2,0}+Mw_{2,1}+\ldots+M^{n-1}w_{2,{n-1}}, \quad \textnormal{on} \ \D.
\end{equation*}
Then $w_{1,j}=w_{2,j}$ in $\D$ for $0 \leq j \leq n-1$. We will prove this shortly, following a few additional structural points in respect of the $K^{\star}$-module $K[z,\bar z]$ or $\mathcal{H}$ for short.

The $L^2(\D)$-inner product on the vector space $K[z,\bar z]$ is given by 
\begin{equation}
\label{innerproduct}
\langle p, q \rangle_K=\int_{\D} p(z,\bar z) \bar q(z,\bar z) dA(z,\bar z),
\end{equation}
where $dA$ is the normalized area measure
\begin{equation*}
dA(z,\bar z)=\frac{1}{\pi}r dr d\theta, \quad z=re^{i\theta}.
\end{equation*} 
If $(k,l)$ and $(k',l')$ are two pairs in $\N \times \N$, and we consider the basic polynomials  $p(z,\bar z)=z^k\bar z^l$  and $q(z,\bar z)=z^{k'}\bar z^{l'}$, then 
\begin{equation*}
\langle p, q \rangle_K=\int_{\D} z^k\bar z^l \bar z^{k'} z^{l'} dA(z,\bar z)=\frac{1}{\pi}\int_{0}^{1}r^{M} r dr \int_{0}^{2\pi} e^{iN\theta} d\theta,
\end{equation*}
where $M=k+l+k'+l'$ and $N=k-k'+l'-l$. Judging by the right hand side of this last equation, we see that
\begin{equation*}
\langle p, q \rangle_K = 0 \Longleftrightarrow N \neq 0 \Longleftrightarrow k-l \neq k'-l'. 
\end{equation*}
If say $k \geq l$ and $k' \geq l'$, then the condition $N=0$ translates into 
\begin{equation*}
z^k\bar z^l \bar z^{k'} z^{l'}=|z|^{2l}z^{k-l}|z|^{2l'}\bar z^{k'-l'}=|z|^{2(l+l')}z^{k-l}\bar{z}^{k'-l'}=|z|^{2(l+l')}|z|^{2(k-l)} \in K[|z|^2].
\end{equation*}
For arbitrary $k,l \in \N$ and $k',l' \in \N$, such products are suitably expressed by use of the symbol $\xi$ in \eqref{xisymbol}, in terms of which
\begin{equation*}
\xi_k(z)\xi_{-l}(z) \xi_{-k'}(z) \xi_{l'}(z)=|z|^{2\min\{k,l\}}\xi_{k-l}(z)|z|^{2\min\{k',l'\}}\xi_{-(k'-l')}(z) \in K[z,\bar z].
\end{equation*}
These examples demonstrate that we can express the condition under which two such basis elements $p=\xi_k\xi_{-l}$ and $q=\xi_{k'}\xi_{-l'}$ in $K[z,\bar z]$ are orthogonal, relative to the inner product in \eqref{innerproduct}, by saying that the polynomials $p$ and $q$ are orthogonal if and only if their product $p\bar q$ is not in $K[|z|^2] \subset K[z,\bar z]$. Equivalently, if $p$ and $q$ are not orthogonal, then their product is a function in $K[|z|^2]$ that is strictly positive on $\D \setminus \{0\}$. Integration over these basic elements extends by linearity to an arbitrary product of polynomials in $K[z,\bar z]$ if we recall  from the earlier Proposition \ref{linearindependencehomparts} that $\{\xi_m\}_{m \in \Z}$ is a generating set for the module $\mathcal{H}$ over $K^{\star}$. Indeed, let
\begin{equation*}
p(z,\bar z)=p_1\xi_{m_{p_1}}(z)+\ldots+p_{k}\xi_{m_{p_k}}(z), \quad q(z,\bar z)=q_1\xi_{m_{q_1}}(z)+\ldots+q_{l}\xi_{m_{q_l}}(z),
\end{equation*} 
for some $m_{p_j},m_{q_j} \in \Z$ and $p_1,\ldots,p_k,q_1,\ldots,q_l \in K^{\star}$. By then forming their product $p \bar q$, we get that
\begin{equation*}
p(z,\bar z)\bar q(z,\bar z)=p_1q_1\xi_{m_{p_1}}\xi_{-m_{q_1}}(z)+\ldots+p_kq_l\xi_{m_{p_k}}\xi_{-m_{q_l}}(z).
\end{equation*} 
Upon evaluating this expression with respect to the inner product in \eqref{innerproduct}, we see that the only ``surviving`` terms are those in $K[|z|^2]$. Against this background, we choose to extend the definition of the vector space inner product to the $K^{\star}$-module $\mathcal{H}$ by defining 
\begin{equation*}
\langle \cdot, \cdot \rangle_{K^{\star}}: \mathcal{H} \times \mathcal{H} \rightarrow K^{\star},
\end{equation*}
to be the module inner product on the $K^{\star}$-module $\mathcal{H}$ that is given by projecting the product $p \bar q$ onto the span of the identity element in $K[z,\bar z]$ over $K^{\star}$. Operationally, this can be done via the projection maps in \eqref{projectionmaps}, or by identifying the polynomial ring $K[z,\bar z]$ with the corresponding ring of functions and integrating over $\T$ in the form of
\begin{equation}
\label{moduleinnerproduct}
\langle p, q \rangle_{K^{\star}}=\int_{0}^{2\pi} p_r(e^{i\theta},e^{-i\theta}) \bar q_r(e^{i\theta},e^{-i \theta}) d\theta,
\end{equation} 
where $f_r(e^{i\theta})=f(re^{i\theta})$ is a function on $\T$. 
It is straightforward to check that the module inner product in \eqref{moduleinnerproduct} on $\mathcal{H}$ satisfies the following properties:
\begin{enumerate}
\item $\langle \xi_m,\xi_n \rangle_{K^{\star}}=0, \quad m \neq n, \quad m,n \in \Z$,
\item $\langle p,q \rangle_{K^{\star}}=\overline{\langle q,p \rangle}_{K^{\star}},\quad p,q \in K[z,\bar z]$, 
\item $\langle \alpha p+ \beta q, r \rangle_{K^{\star}}= \alpha \langle p,r \rangle_{K^{\star}}+\beta \langle q,r \rangle_{K^{\star}}, \quad p,q,r \in K[z,\bar z], \quad \alpha, \beta \in K^{\star}$,
\item $\langle p,p \rangle_{K^{\star}} > 0, \quad p \neq 0, \quad p \in K[z,\bar z]$. 
\end{enumerate} 
The last point (4) should here be taken in the sense of polynomial functions in $K[|z|^2]$ on $\D$ (or $\C$), whereby $p > 0$ if and only if $p(|z_0|^2)>0$ for all $z_0 \in \D \setminus \{0\}$. Thus, we may think of the $K^{\star}$-module $K[z,\bar z]$ or $\mathcal{H}$ with the module inner product $\langle \cdot, \cdot \rangle_{K^{\star}}: \mathcal{H} \times \mathcal{H} \rightarrow K^{\star}$ in a similar way to how we think of a vector space $V$ over $K$ equipped with an inner product $\langle \cdot , \cdot \rangle: V \times V \rightarrow K$, with the induced ``norm`` $||p-q||_{\mathcal{H}}=\langle p-q,p-q \rangle_{K^{\star}}$ replacing that of $||v_1-v_2||_V=\langle v_1-v_2,v_1-v_2 \rangle$.       

We shall not delve further into the reaches or limitations of the analogue between the two, but rather leave it as a source for reflection and employ it as a convenience in that which follows next. This next result shows that the cellular decomposition in Theorem \ref{cellulardecomposition} for polyharmonic functions is unique. 

\begin{lemma}
\label{uniquenesslemma}
Let $n \in \N$ be positive. Let $w_{1,j},w_{2,j} \in C^{\infty}(\D)$ be given as in \eqref{olofssonsumsfunctioncase} relative to some sequences of complex numbers $\{a_{m,j}\}_{m \in \Z}$ and $\{b_{m,j} \}_{m \in \Z}$ satisfying \eqref{limsupcoeff}, respectively, for $0 \leq j \leq n$. Let $M^j$ be the operator in \eqref{Moperatorfunctioncase} for $0 \leq j \leq n$. If
\begin{equation}
\label{differencecellulardecomp}
0=w_{1,0}(z)-w_{2,0}(z)+M^1(w_{1,1}(z)-w_{2,1}(z))+\ldots+M^{n}(w_{1,{n}}(z)-w_{2,{n}}(z)),
\end{equation} 
for $z \in \D$, then $a_{{m,j}}=b_{{m,j}}$ for all $m \in \Z$ and $0 \leq j \leq n$. Consequently, $w_{1,j}=w_{2,j}$ for $0 \leq j \leq n$. 
\end{lemma}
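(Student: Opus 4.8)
The plan is to test the identity \eqref{differencecellulardecomp} against each generator $\xi_m$ of $\mathcal{H}$ by pairing with the module inner product $\langle\,\cdot\,,\,\cdot\,\rangle_{K^{\star}}$ introduced above, and then to read off the vanishing of all the coefficients from the linear independence recorded in Proposition \ref{Olofssonbasis}.

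First I would pass to the difference of the two decompositions: set $w_j:=w_{1,j}-w_{2,j}$, which is again of the form \eqref{olofssonsumsfunctioncase}, namely the $(n-1-j,n-1-j)$-harmonic function with coefficient sequence $c_{m,j}:=a_{m,j}-b_{m,j}$. Since
\begin{equation*}
\limsup_{|m|\to\infty}|c_{m,j}|^{1/|m|}\leq\max(\limsup_{|m|\to\infty}|a_{m,j}|^{1/|m|},\,\limsup_{|m|\to\infty}|b_{m,j}|^{1/|m|})\leq 1,
\end{equation*}
the series defining $w_j$, and hence that defining $M^jw_j$ after multiplication by the fixed polynomial $(1-|z|^2)^j$, converges absolutely in the space of smooth functions on $\D$, by the representation result of \cite{OK} recalled around \eqref{pqpolrepresentations}. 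It then suffices to prove that $c_{m,j}=0$ for every $m\in\Z$ and $0\leq j\leq n-1$.

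Now fix $m\in\Z$ and apply $\langle\,\cdot\,,\xi_m\rangle_{K^{\star}}$ to both sides of \eqref{differencecellulardecomp}. Using that $\langle\,\cdot\,,\,\cdot\,\rangle_{K^{\star}}$ is $K^{\star}$-linear in its first argument, that $\langle\xi_k,\xi_m\rangle_{K^{\star}}=0$ for $k\neq m$, and that $\langle\xi_m,\xi_m\rangle_{K^{\star}}=2\pi|z|^{2|m|}$ (all immediate from \eqref{moduleinnerproduct} and the properties listed above), the sum over the generators $\xi_k$ collapses to its term $k=m$, yielding $\langle M^jw_j,\xi_m\rangle_{K^{\star}}=2\pi|z|^{2|m|}c_{m,j}\mathit{O}_j^{|m|,n-1}(|z|^2)$ for each $0\leq j\leq n-1$, where $\mathit{O}_j^{|m|,n-1}(|z|^2)=(1-|z|^2)^j\mathit{o}_j^{|m|,n-1}(|z|^2)$ as in \eqref{ApolynomialsCdecomplemma}. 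Summing over $j$ and using $\langle 0,\xi_m\rangle_{K^{\star}}=0$ gives
\begin{equation*}
0=2\pi|z|^{2|m|}\sum_{j=0}^{n-1}c_{m,j}\mathit{O}_j^{|m|,n-1}(|z|^2).
\end{equation*}
Cancelling the factor $2\pi|z|^{2|m|}$, which is nonzero on $\D\setminus\{0\}$, and using continuity, the polynomial $\sum_{j=0}^{n-1}c_{m,j}\mathit{O}_j^{|m|,n-1}$ in $|z|^2$ vanishes identically. By Proposition \ref{Olofssonbasis} the polynomials $\mathit{O}_0^{|m|,n-1},\ldots,\mathit{O}_{n-1}^{|m|,n-1}$ form a $K$-basis of the space of polynomials in $|z|^2$ of degree at most $n-1$, hence are linearly independent over $K$; therefore $c_{m,j}=0$ for $0\leq j\leq n-1$. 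As $m\in\Z$ was arbitrary, $a_{m,j}=b_{m,j}$ for all $m$ and $j$, and so $w_{1,j}=w_{2,j}$ on $\D$ for $0\leq j\leq n-1$.

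I do not expect a serious obstacle here: the argument runs entirely on facts established earlier --- the properties of $\langle\,\cdot\,,\,\cdot\,\rangle_{K^{\star}}$, the convergence theorem of \cite{OK}, and the basis property of the $\mathit{O}_j^{|m|,n-1}$ from Proposition \ref{Olofssonbasis} (whose underlying computation is Lemma \ref{Cdecomplemma}). The only step calling for a little care is the interchange of $\langle\,\cdot\,,\xi_m\rangle_{K^{\star}}$ with the infinite summation defining each $w_j$; this is harmless, because for fixed radius the pairing is the integral of a smooth function against a bounded kernel over a compact set, hence continuous for the much stronger $C^{\infty}(\D)$-convergence supplied by \eqref{limsupcoeff} and \cite{OK}, and in fact the pairing already stabilizes once the partial sum includes the $\xi_m$-term. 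Equivalently, one may run the whole computation in polar coordinates: writing $z=re^{i\theta}$ and $\xi_k(re^{i\theta})=r^{|k|}e^{ik\theta}$, one compares $e^{im\theta}$-Fourier coefficients of the two sides of \eqref{differencecellulardecomp} for each fixed $r\in(0,1)$ and reaches the same polynomial identity in $r^2$.
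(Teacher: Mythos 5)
Your argument is essentially the paper's own proof: pair \eqref{differencecellulardecomp} against each generator $\xi_m$ with the module inner product, use properties (1) and (3) to isolate the $m$:th component, cancel the nonvanishing radial factor, and invoke the linear independence of the $\mathit{O}_j^{|m|,\cdot}$ (Lemma \ref{Cdecomplemma} / Proposition \ref{Olofssonbasis}) to kill the coefficients. The only discrepancy is a harmless re-indexing --- you run $j$ from $0$ to $n-1$ with $\mathit{O}_j^{|m|,n-1}$ to match \eqref{olofssonsumsfunctioncase} literally, whereas the lemma as stated (and the paper's proof) uses $0 \leq j \leq n$ with $\mathit{O}_j^{|m|,n}$ --- which does not affect the validity of the argument.
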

\begin{proof}
Let $u \in C^{\infty}(\D)$ denote the right hand side of \eqref{differencecellulardecomp} and let
\begin{equation*}
v_{j}=M^j(w_{1,j}-w_{2,j}), \quad 0 \leq j \leq n. 
\end{equation*}
Let $m \in \Z$. By property (3) for the module inner product above, 
\begin{align*}
0&=\langle u,\xi_m \rangle_{K^{\star}}=\langle v_{0}+v_{1}+\ldots+v_{n},\xi_m \rangle_{K^{\star}}=\langle v_{0},\xi_m \rangle_{K^{\star}}+\ldots+\langle v_{n},\xi_m \rangle_{K^{\star}},
\end{align*}
where $\langle \cdot, \cdot \rangle_{K^{\star}}$ is to be understood in the sense of \eqref{moduleinnerproduct}. With the use of property (1) for the module inner product in \eqref{moduleinnerproduct}, we then see that
\begin{equation*}
0=(a_{{m,0}}-b_{{m,0}})\mathit{o}_{0}^{|m|,n}+(a_{{m,1}}-b_{{m,1}})M\mathit{o}_{1}^{|m|,n}+\ldots+(a_{{m,n}}-b_{{m,n}})M^n\mathit{o}_{n}^{|m|,n},
\end{equation*}
on $\D$. Recall that 
\begin{equation*}
\mathit{O}_{j}^{|m|,n}(|z|^2)=(1-|z|^2)^j\mathit{o}_{j}^{|m|,n}(|z|^2), \quad 0 \leq j \leq n, \quad z \in \D. 
\end{equation*}
Expressed in such terms, we get that 
\begin{equation*}
0=(a_{{m,0}}-b_{{m,0}})\mathit{O}_{0}^{|m|,n}+(a_{{m,1}}-b_{{m,1}})\mathit{O}_{1}^{|m|,n}+\ldots+(a_{{m,n}}-b_{{m,n}})\mathit{O}_{n}^{|m|,n},
\end{equation*}
on $\D$. Since the functions $\mathit{O}_{0}^{|m|,n},\ldots,\mathit{O}_{n}^{|m|,n}$ are linearly independent on $\D$ by Lemma \ref{Cdecomplemma}, we may now conclude with
\begin{equation*}
a_{{m,j}}-b_{{m,j}}=0,  
\end{equation*} 
for $0 \leq j \leq n$.
\end{proof}
It follows from the last lemma that the cellular decomposition is unique in the sense that was described following Theorem \ref{cellulardecomposition}. 
\begin{cor}
Let $n \in \N$ be positive. Let $u \in C^{2n}(\D)$ be a polyharmonic function of order $n$ on $\D$. Then the cellular decomposition for $u$ in \eqref{cellulardecompsmoothfunctions} is unique. 
\end{cor}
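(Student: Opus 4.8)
The plan is to deduce this directly from the existence statement in Theorem~\ref{cellulardecomposition} together with the uniqueness encoded in Lemma~\ref{uniquenesslemma}. First I would invoke Theorem~\ref{cellulardecomposition} to obtain at least one representation of $u$ in the form \eqref{cellulardecompsmoothfunctions}, say with components $w_{1,0},\ldots,w_{1,n-1}\in C^{\infty}(\D)$ of the shape \eqref{olofssonsumsfunctioncase}, each given relative to a complex sequence satisfying \eqref{limsupcoeff}. Note that these $w_{1,j}$ are automatically smooth by the structure of the Almansi representation used in the proof of the theorem, even though $u$ is only assumed to lie in $C^{2n}(\D)$. This settles existence; what remains is to show that no other representation of this kind exists.

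Next I would suppose $u$ also admits a second representation $u=w_{2,0}+M^1w_{2,1}+\ldots+M^{n-1}w_{2,n-1}$ with the $w_{2,j}$ of the same type, and subtract the two expansions for $u$ to obtain $0=(w_{1,0}-w_{2,0})+M^1(w_{1,1}-w_{2,1})+\ldots+M^{n-1}(w_{1,n-1}-w_{2,n-1})$ on $\D$. To bring this into the form required by Lemma~\ref{uniquenesslemma}, I would extend the index range from $0\le j\le n-1$ to $0\le j\le n$ by declaring the top components, together with their defining coefficient sequences, to be identically zero, which trivially satisfies \eqref{limsupcoeff} and makes hypothesis \eqref{differencecellulardecomp} of the lemma hold verbatim. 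The conclusion of Lemma~\ref{uniquenesslemma} then gives $w_{1,j}=w_{2,j}$ in $\D$ for every $0\le j\le n-1$, which is precisely the asserted uniqueness.

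There is essentially no obstacle beyond this bookkeeping: all the substantive work --- the orthogonality of the generators $\xi_m$ under the module inner product \eqref{moduleinnerproduct}, the linear independence of the polynomials $\mathit{O}_{j}^{|m|,n}$ supplied by Lemma~\ref{Cdecomplemma}, and the convergence estimates that license evaluating the module inner product termwise --- has already been carried out in the proof of Lemma~\ref{uniquenesslemma}. The one point meriting a line of care is the discrepancy between the index conventions of the corollary (order $n$, components indexed $0$ through $n-1$) and of the lemma (range $0\le j\le n$), which I would resolve by the zero-padding just described; alternatively one could simply read Lemma~\ref{uniquenesslemma} with $n$ replaced by $n-1$ throughout, as its proof is insensitive to this shift.
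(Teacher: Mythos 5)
Your proposal is correct and follows the same route as the paper, whose entire proof is a one-line appeal to Lemma \ref{uniquenesslemma}; subtracting two putative decompositions and invoking that lemma is exactly the intended argument. Your remark about the index mismatch is well taken, and of your two fixes the re-indexing (reading the lemma with $n$ replaced by $n-1$) is the right one, since the zero-padding would put the components against the functions $\mathit{o}_{j}^{|m|,n}$ rather than the $\mathit{o}_{j}^{|m|,n-1}$ actually appearing in \eqref{olofssonsumsfunctioncase}.
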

\begin{proof}
The uniqueness statement is a consequence of Lemma \ref{uniquenesslemma}.
\end{proof}

There is a ``meta-narrative`` in respect of Theorem \ref{cellulardecomposition} that is worth mentioning before we end. For as we saw in the last section and Proposition \ref{polyharmonicorder}, the poly\-harmonic functions of order $n \in \N$ grow out of the basic elements $z^k \bar z^l$ where either $k < n$ or $l < n$. Meanwhile, the generalized harmonic operators $L_{\gamma_1,\gamma_2} \in \mathfrak{H}_2$ can be written as
\begin{equation*}
L_{\gamma_1,\gamma_2}=\partial \bar \partial-(\bar z \bar \partial-\gamma_1)(z\partial-\gamma_2).
\end{equation*}
If we now evaluate this last operator with respect to such products of monomials, there results an expression
\begin{equation*}
L_{\gamma_1,\gamma_2} z^k \bar z^l=\partial \bar \partial z^k \bar z^l-(l-\gamma_1)(k-\gamma_2)z^k \bar z^l.
\end{equation*}
When $\gamma_1=l \in \N$ or $\gamma_2=k \in \N$, one or the other of the two expressions in the parentheses is zero, and $L_{\gamma_1,\gamma_2} z^k \bar z^l=\partial \bar \partial z^k \bar z^l \in \mathcal{H}^{n-1}$. In particular then, we see that 
\begin{equation*}
L_{n-1,n-1}\mathcal{H}^n \subset \mathcal{H}^{n-1}. 
\end{equation*}
This last observation was first made by A.~Borichev and H.~Hedenmalm in \cite{BH}, and was deduced from their factorization of the polyharmonic operators in terms of such generalized harmonic operators, as described by the authors in their Corollary 6.3. This description can also be compared to the discussion at the beginning of section five. 

\section*{Acknowledgements}

The author would like to thank Anders Olofsson for the many reflective dialogues on the topic of the present paper. The author is grateful to Jens Wittsten for his detailed and valuable suggestions in relation to this text. 

\bigskip

Research was partially supported by the Swedish Research Council grant number 2019-04878.


\end{document}